\pdfoutput=1
\documentclass[12pt]{article}
\usepackage[T1]{fontenc}
\usepackage[utf8]{inputenc}
\usepackage[nomath]{lmodern}
\usepackage[british]{babel}
\usepackage{amsmath}
\usepackage{amsthm}
\usepackage{amssymb}
\usepackage{geometry}
\usepackage{enumitem}
\usepackage{xcolor}
\usepackage{tikz-cd}
\usepackage{tikz}
	\usetikzlibrary{arrows.meta} 
	\usetikzlibrary{graphs} 
\usepackage[unicode, backref]{hyperref}


\newcommand*\cdef{\newcommand*}

\cdef \note [1]{%
	\textcolor{red}{[#1]}%
}

\cdef \maps {\colon} 
\cdef \cmp {\circ} 
\cdef \inv {^{-1}} 
\cdef \set [1] {\{#1\}}
\cdef \tuple [1] {\langle#1\rangle}
\cdef \card [1] {\lvert#1\rvert} 
\cdef \abs [1] {\lvert#1\rvert} 
\cdef \powset [1] {\mathcal{P}(#1)} 
\cdef \powfinset [1] {\mathcal{P}_{\omega}(#1)} 
\cdef \symdiff {\mathbin{\triangle}} 
\cdef \norm [1] {\lVert#1\rVert} 
\cdef \floor [1] {\left\lfloor #1\right\rfloor} 
\cdef \ceil [1] {\left\lceil #1\right\rceil} 
\cdef \K {\mathcal{K}} 
\cdef \Aut {\operatorname{Aut}} 
\cdef \Dist {\operatorname{Dist}} 
\cdef \Age {\operatorname{Age}} 
\cdef \im {\operatorname{im}} 
\cdef \id {\operatorname{id}} 
\cdef \RR {\mathbb{R}} 
\cdef \QQ {\mathbb{Q}} 
\cdef \ZZ {\mathbb{Z}} 
\cdef \NN {\mathbb{N}} 
\cdef \F {\mathcal{F}} 
\cdef \mapsfrom {\mathrel{\reflectbox{\ensuremath{\mapsto}}}}

\newcommand{\bis}{\operatorname{bs}} 

\theoremstyle{plain}
\newtheorem{theorem}{Theorem}[section]
\newtheorem{proposition}[theorem]{Proposition}
\newtheorem{corollary}[theorem]{Corollary}

\theoremstyle{definition}
\newtheorem{definition}[theorem]{Definition}
\newtheorem{observation}[theorem]{Observation}
\newtheorem{remark}[theorem]{Remark}

\newtheorem{example}[theorem]{Example}
\newtheorem{question}[theorem]{Question}
\newtheorem{construction}[theorem]{Construction}

\frenchspacing

\let \leq \leqslant 
\let \geq \geqslant

\setlist{itemsep = 0pt}
\setlist[enumerate]{leftmargin=*} 
\setlist[enumerate, 1]{label=\upshape (\arabic*), ref=(\arabic*)}
\setlist[enumerate, 2]{label=\upshape (\arabic{enumi}\alph*), ref=(\arabic{enumi}\alph*)}

\geometry{
	a4paper, 
	hmarginratio = {1:1}, 
	vmarginratio = {2:3}, 
	left = 2.5cm, 
	top = 2.5cm,
}

\hypersetup{
	pdftitle = {Homogeneous isosceles-free spaces},
	pdfauthor = {Christian Bargetz, Adam Bartoš, Wiesław Kubiś, Franz Luggin},
	colorlinks,
	linkcolor = [rgb]{0, 0, 0.7},
	citecolor = [rgb]{0, 0.7, 0},
	urlcolor = [rgb]{0.4, 0.2, 0},
	linktoc = section,
}

\title{Homogeneous isosceles-free spaces}
\author{Christian Bargetz \\
		\small \href{mailto:christian.bargetz@uibk.ac.at}{\nolinkurl{christian.bargetz@uibk.ac.at}} \\
		\small Universität Innsbruck, \\
		\small Department of Mathematics, \\
		\small Technikerstraße 13, 6020 Innsbruck, Austria
	\and Adam Bartoš \\
		\small \href{mailto:bartos@math.cas.cz}{\nolinkurl{bartos@math.cas.cz}} \\
		\small Institute of Mathematics, \\
		\small Czech Academy of Sciences, \\
		\small Žitná 25, 115 67 Prague, Czech Republic
	\smallskip
	\and Wiesław Kubiś \\
		\small \href{mailto:kubis@math.cas.cz}{\nolinkurl{kubis@math.cas.cz}} \\
		\small Institute of Mathematics, \\
		\small Czech Academy of Sciences, \\
		\small Žitná 25, 115 67 Prague, Czech Republic
	\and Franz Luggin \\
		\small \href{mailto:Franz.Luggin@student.uibk.ac.at}{\nolinkurl{Franz.Luggin@student.uibk.ac.at}} \\
		\small Universität Innsbruck, \\
		\small Department of Mathematics, \\
		\small Technikerstraße 13, 6020 Innsbruck, Austria
}

\date{May 2024} 
\begin{document}

\maketitle

\begin{abstract}
	We study homogeneity aspects of metric spaces in which all triples of distinct points admit pairwise different distances; such spaces are called \emph{isosceles-free}.
	In particular, we characterize all homogeneous isosceles-free spaces up to isometry as vector spaces over the two-element field, endowed with an injective norm. Using isosceles-free decompositions, we provide bounds on the maximal number of distances in arbitrary homogeneous finite metric spaces.
	
	\medskip
	
	\noindent
	\emph{MSC (2020):}
	03C50, 
	20B25, 
	51F99, 
	54E35, 
	05C15, 
	05E18. 
	
	\noindent
	\emph{Keywords:} Isosceles-free metric space, homogeneity, isometry group.
\end{abstract}

\tableofcontents

\section{Introduction}

A mathematical structure is called \emph{ultrahomogeneous} if every isomorphism between its finite (or, more generally, finitely generated), substructures extends to an automorphism. Adding bounds on the cardinality of the substructures we obtain \emph{$n$-homogeneity}, where $n \geq 1$ is a natural number. Countable (or, more generally, countably generated) ultrahomogeneous structures are known in model theory as \emph{Fraïssé limits} (see e.g. Hodges~\cite{Hodges}) and they are fully characterized as unique countable ultrahomogeneous structures generated by a given class of finite (or finitely generated) structures satisfying some natural axioms, where the most important one is the \emph{amalgamation property}. Metric spaces can be easily viewed as first order structures, for instance, replacing the metric by countably many binary relations saying that ``the distance is less than a fixed positive rational number''. In this setting, isomorphisms are just bijective isometries and a metric space is ultrahomogeneous if every isometry between its finite subsets extends to a bijective auto-isometry of the space. Perhaps the first and arguably most important example is the \emph{Urysohn space}~\cite{Urysohn1927}, the unique separable complete ultrahomogeneous metric space $\mathbb U$ containing isometric copies of all separable metric spaces. The space $\mathbb U$ contains a dense countable ultrahomogeneous subspace in which all distances are rational, this is actually \emph{the} Fraïssé limit of the class of all finite metric spaces with rational distances.

In this paper we consider the special class of metric spaces without isosceles triangles, called \emph{isosceles-free}, in connection with homogeneity.
It turns out this class is a nice source of examples in the context of Fraïssé theory as well as in the context of finite combinatorics and the question of how many distinct distances a finite homogeneous spaces of a fixed size can have.

Our main results include:
\begin{enumerate}
\item \label{res:ultra-free}
  Realizing that every $1$-homogeneous isosceles-free metric space is already ultrahomogeneous (Proposition~\ref{ultra-free}), and that homogeneous isosceles-free spaces are exactly \emph{uniquely $2$-homogeneous} spaces (Proposition~\ref{unique-two-homog}).
\item \label{res:WAP_fails}
  Showing that the class of all finite isosceles-free metric spaces is a hereditary class without the \emph{weak amalgamation property} (Theorem~\ref{WAP_fails}).
\item \label{res:boolean}
  Characterizing all homogeneous isosceles-free spaces up to isometry as normed $\ZZ_2$-linear spaces with an injective norm (Theorem~\ref{thm:homog_iso-free}), using an auxiliary notion of a \emph{Boolean metric space}.
\item \label{res:decomp}
  Studying more general $1$-homogeneous spaces through the lens of \emph{singleton distances} (i.e. locally non-repeating distances) and related invariant decompositions, showing that every $1$-homogeneous metric space is Boolean or \emph{isosceles-generated} or a \emph{rainbow duplicate} of an isosceles-generated space (Theorem~\ref{thm:homogeneous_cases}).
  In the case of $2$-homogeneous spaces, this further reduces to being isosceles-generated or isosceles-free.
\item \label{res:bounds}
  Giving bounds on the maximal number of distances in a homogeneous finite metric space of size $n$.
  In the case of a $2$-homogeneous space, we have the optimal bound $2^m(k + 1)$ for $n = 2^m(2k + 1)$, realized even by ultrahomogeneous spaces (Theorem~\ref{thm:number-of-distances} and Example~\ref{bound_attained}).
  This bound is optimal also for $1$-homogeneous spaces whose size is odd or a power of two.
  In the case of an even-sized $1$-homogeneous space of size $2^m(4k + 2)$ we give a better lower bound $2^m(3k + 2)$ (Example~\ref{ex:rainbow-on-2-homogeneous-space}).
\end{enumerate}

The paper is organized as follows.
In Section~\ref{sec:homog} we gather various notions of homogeneity of metric spaces and prove general preservation theorems.
In Section~\ref{sec:iso-free} we study isosceles-free metric spaces in general and in connection with $1$-homogeneity. We prove the results \ref{res:ultra-free} and \ref{res:WAP_fails} as well as the fact that the automorphism group of an isosceles-free space is Boolean.
	We also give a couple of illustrative examples.

In Section~\ref{sec:boolean} we further exploit the fact that homogeneous isosceles-free spaces are uniquely $1$-homogeneous and have a Boolean automorphism group. We call metric spaces with the latter properties \emph{Boolean metric spaces} and prove that they admit a certain $\ZZ_2$-linear/affine structure.
This leads to the proof of the complete classification of homogeneous isosceles-free spaces \ref{res:boolean}.
Later in the section we give infinite Cantor-like examples of homogeneous isosceles-free spaces (Example~\ref{ex:Cantor2} and \ref{ex:Cantor3}), demonstrating that metric completion may break ultrahomogeneity and the property of being isosceles-free.

In Section~\ref{sec:decomp} we study invariant decompositions of homogeneous metric spaces based on singleton distances – the decomposition into isosceles-free components and the decomposition into isosceles-generated components – in order to prove \ref{res:decomp}.
We also introduce the construction of a \emph{rainbow duplicate} of a $1$-homogeneous metric space, and show that this particular construction in fact realizes all $1$-homogeneous spaces with two isosceles-generated components.

In Section~\ref{sec:distances} we exploit the structural properties and constructions of homogeneous metric spaces obtained in previous sections to give a partial answer to the question: how many distinct distances can a finite homogeneous metric space of a fixed size $n$ have?
We obtain the bounds~\ref{res:bounds}.
Concrete values of the bounds are summarized in Table~\ref{table}.

\medskip

Let $X$ be a metric space. We use the following notation.
\begin{itemize}
	\item The distance is usually denoted by $d(x, y)$. We sometimes use $d_X$ instead of $d$ for clarity.
	\item $\Dist(X)$ denotes the set of used distances $\set{d(x, y): x, y \in X}$.
	\item $\Aut(X)$ denotes the automorphism group of all isometries $X \to X$.
		Note that here the word \emph{isometry} stands for \emph{isometric isomorphism} and not \emph{isometric embedding}.
	\item $\Age(X)$ denotes the class of all finite metric spaces isometrically embeddable into $X$.
\end{itemize}

\section{Homogeneity} \label{sec:homog}

A metric space $X$ is said to be 
\begin{itemize}
	\item \emph{$n$-homogeneous} for $n \in \NN_+=\{1,2,3,\dotsc\}$ if for every isometry $f\maps A \to B$ between subspaces $A, B \subseteq X$ with $\card{A} \leq n$ there exists an automorphism $F\maps X \to X$ extending $f$, i.e. $F|_A = f$;
	\item \emph{ultrahomogeneous} if it is $n$-homogeneous for every $n \in \NN_+$;
	\item \emph{uniquely $n$-homogeneous} if for every isometry $f\maps A \to B$ for $A, B \subseteq X$ with $0 < \card{A} \leq n$ there exists a unique $F \in \Aut(X)$ with $F|_A = f$;
	\item \emph{uniquely ultrahomogeneous} if it is uniquely $n$-homogeneous for every $n \in \NN_+$.
\end{itemize}
Note that $X$ is uniquely $n$-homogeneous if and only if it is $n$-homogeneous and uniquely $1$-homogeneous. In model theory, uniquely $1$-homogeneous structures are called \emph{Ohkuma structures}, see~\cite{GiraudetHolland} and~\cite{Ohkuma}.
We usually avoid the term \emph{homogeneous} as it can mean either $1$-homogeneous or ultrahomogeneous in the literature.

\begin{definition}
	Let $X$ be a metric space.
	We say that a subspace $Y \subseteq X$ is \emph{quasi-invariant} if for every $f \in \Aut(X)$ such that $f[Y] \cap Y \neq \emptyset$ we have $f[Y] = Y$.
\end{definition}

\begin{example}
	The metric space $X=\set{\tuple{i,j}: 1\leq i\leq 4,1\leq j\leq 2}$ with the metric \begin{align*}
		d(\tuple{i_1,j_1},\tuple{i_2,j_2})=\begin{cases}
			2&j_1\neq j_2,\\
			1&i_1\neq i_2,\\
			0&\text{else.}
		\end{cases}
	\end{align*}
	has two quasi-invariant subspaces $Y_j=\set{\tuple{i,j}: 1\leq i\leq 4}$ for $j=1,2$ (see Figure \ref{fig:quasi-invariant}).
	
	\begin{figure}[!ht]
		\centering
		
		\begin{tikzpicture}[every node/.style={draw,shape=circle,fill,minimum size=4pt,inner sep=0pt,outer sep=1pt}]
			\definecolor{c-d1}{RGB}{255,0,0}
			\definecolor{c-d2}{RGB}{0,255,0}
			\definecolor{c-d3}{RGB}{0,0,255}
			
			\node (v1) at (0, 0) {};
			\node (v2) at (2, 0) {};
			\node (v3) at (0, 2) {};
			\node (v4) at (2, 2) {};
			
			\node (v5) at (5, 1) {};
			\node (v6) at (7, 1) {};
			\node (v7) at (5, 3) {};
			\node (v8) at (7, 3) {};

			\draw[thick] (v1)--(v2)--(v3)--(v4)--(v1)--(v3);
			\draw[thick] (v2)--(v4);
			\draw[thick] (v5)--(v6)--(v7)--(v8)--(v5)--(v7);
			\draw[thick] (v6)--(v8);
			\node[draw=none, fill=none] at (6,0.5) {$Y_2$};
			\node[draw=none, fill=none] at (1,2.5) {$Y_1$};

		\end{tikzpicture}
		
		\caption{The metric space $X=Y_1\cup Y_2$ with all edges of distance $1$ drawn. All pairs of distinct points without an edge between them have distance $2$.}
		\label{fig:quasi-invariant}
	\end{figure}
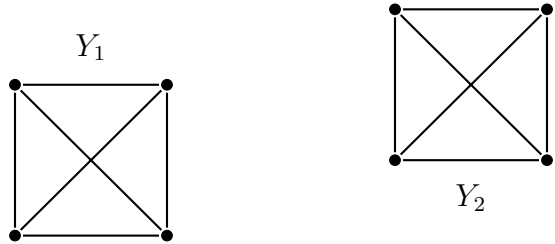
	
	To check that $Y_1$ is quasi-invariant, note that if $f$ is an automorphism and maps any point $x$ in $Y_1$ to $Y_1$, then due to $f$ being an isometry, we have $f[Y_1]=Y_1$ since $Y_1$ is exactly the set of points of distance $\leq 1$ to $x$. This remains true regardless of which distances (or how many distinct ones) we choose between a point in $Y_1$ and a point in $Y_2$, as long as no such distance is chosen as $1$.
\end{example}

More generally, any subspace $Y \subseteq X$ such that $d[Y \times Y] \cap d[(X \setminus Y) \times Y] = \emptyset$ or any component of an \emph{invariant decomposition} of $X$ (see Definition~\ref{def:invariant_decomposition}) is quasi-invariant.

\begin{proposition} \label{thm:ultrahom_subspace}
	Let $X$ be a metric space and let $Y \subseteq X$ be a quasi-invariant subspace.
	If $X$ is (uniquely) $n$-homogeneous for some $n \in \NN_+$ or ultrahomogeneous, then so is $Y$, and this is witnessed by restrictions of automorphisms of $X$.
	
	\begin{proof}
		Let $f\maps A \to B$ be an isometry between nonempty finite subspaces $A, B \subseteq Y \subseteq X$.
		If $X$ is $\card{A}$-homogeneous, there is $F \in \Aut(X)$ extending $f$.
		We have $\emptyset \neq B \subseteq Y \cap F[Y]$, and hence $F|_Y \in \Aut(Y)$ since $Y$ is quasi-invariant.
		Moreover, if $F$ is the unique automorphism of $X$ extending $f$, then $F|_Y$ is the unique automorphism of $Y$ extending $f$.
	\end{proof}
\end{proposition}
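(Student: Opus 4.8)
The strategy is to lift a given finite isometry within $Y$ to an automorphism of the ambient space $X$ and then push it back down using quasi-invariance. Let $f \maps A \to B$ be an isometry between nonempty finite subspaces $A, B \subseteq Y$, with $\card{A} \leq n$ in the $n$-homogeneous case. Viewing $A$ and $B$ also as subspaces of $X$, the ($n$- or ultra-)homogeneity of $X$ yields some $F \in \Aut(X)$ with $F|_A = f$. The key point is that $\emptyset \neq B = f[A] = F[A]$ lies in both $Y$ and $F[Y]$, so quasi-invariance of $Y$ forces $F[Y] = Y$; hence $F|_Y$ is a bijective self-isometry of $Y$, i.e.\ $F|_Y \in \Aut(Y)$, and it extends $f$. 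This already proves that $Y$ is $n$-homogeneous (resp.\ ultrahomogeneous), witnessed by restrictions of automorphisms of $X$.

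For the uniqueness clause, recall that being uniquely $n$-homogeneous amounts to being $n$-homogeneous and uniquely $1$-homogeneous, so assume in addition that $X$ is uniquely $1$-homogeneous, and let $F$ now be the \emph{unique} automorphism of $X$ extending $f$. By the previous paragraph $F|_Y \in \Aut(Y)$ extends $f$; I want to rule out any other extension. If $G \in \Aut(Y)$ also extends $f$, set $H := (F|_Y)\inv \cmp G \in \Aut(Y)$; then $H$ fixes $A$ pointwise, and it suffices to show $H = \id_Y$. Fix $a \in A$, so $H(a) = a$. For $y \in Y$ the restriction of $H$ to $\set{a, y}$ is an isometry onto $\set{a, H(y)}$ fixing $a$; if this two-point isometry extends to some $\widehat{H} \in \Aut(X)$, then $\widehat{H}$ fixes the point $a$, so unique $1$-homogeneity of $X$ gives $\widehat{H} = \id_X$, whence $H(y) = \widehat{H}(y) = y$. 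As $y$ was arbitrary, $H = \id_Y$ and $G = F|_Y$.

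The main obstacle is guaranteeing that last extension. When $n \geq 2$, or when $X$ is ultrahomogeneous, the two-point isometry $\set{a, y} \to \set{a, H(y)}$ falls under the homogeneity hypothesis on $X$ and the argument closes immediately. The delicate case is $n = 1$, where only single-point isometries of $X$ can be extended; there one must argue more directly that a quasi-invariant subspace of a uniquely $1$-homogeneous space has no nontrivial automorphism with a fixed point. For isosceles-free spaces — the context in which the proposition is used — this is painless: the map $y \mapsto d(a, y)$ is injective (three points with a repeated distance would form an isosceles triangle), so $H(a) = a$ together with $d(a, H(y)) = d(a, y)$ forces $H(y) = y$ for every $y$. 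I expect this $n = 1$ case to be where any genuine care is required.
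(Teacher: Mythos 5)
Your first paragraph is exactly the paper's proof of the non-unique part: extend $f$ to $F \in \Aut(X)$, observe that $\emptyset \neq B \subseteq Y \cap F[Y]$, and apply quasi-invariance to conclude $F|_Y \in \Aut(Y)$. For the uniqueness clause the paper offers only the bare assertion that $F|_Y$ is then the unique automorphism of $Y$ extending $f$; you, by contrast, supply an actual argument, and it is correct precisely where it applies: for $n \geq 2$ or $X$ ultrahomogeneous, extending the two-point isometry $\set{a, y} \to \set{a, H(y)}$ to some $\widehat H \in \Aut(X)$ and invoking unique $1$-homogeneity of $X$ does force $H = \id_Y$, so $Y$ is uniquely $n$-homogeneous (respectively uniquely ultrahomogeneous).

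The case $n = 1$ that you flag is not merely where care is required: the uniqueness transfer is genuinely false there, so the gap you leave cannot be closed without extra hypotheses. The paper's own Example~\ref{Dn} provides a counterexample: for $n \geq 3$ the space $D_n$ is uniquely $1$-homogeneous, the copy $C_n \times \set{0}$ is quasi-invariant (the two copies are the isosceles-generated components, and components of an invariant decomposition are quasi-invariant), and yet $C_n$ is not uniquely $1$-homogeneous, since the reflection through a vertex is a nonidentity isometry fixing that vertex. The obstruction is the one you sense: quasi-invariance only controls restrictions of automorphisms of $X$, and when only one-point partial isometries of $X$ are extendable, a nontrivial automorphism of $Y$ with a fixed point need not be comparable to any automorphism of $X$. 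So your proof covers every case in which the statement is actually true, whereas the paper's final sentence silently asserts the very step that fails at $n = 1$. Two small remarks: your fallback via isosceles-freeness is sound, but the only place the paper invokes the ``uniquely'' clause is Proposition~\ref{thm:homogeneous_product}, where the transfer is instead rescued by the product structure (automorphisms of a factor extend to the product, so a competing automorphism of the factor lifts); and the clean repair of the proposition itself is to restrict its ``uniquely'' clause to $n \geq 2$ and to ultrahomogeneity, exactly the cases your argument handles.
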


For metric spaces $X$ and $Y$ let $X \times_1 Y$ denote the product space endowed with the $\ell_1$-metric: $d(\tuple{x_1, y_1}, \tuple{x_2, y_2}) = d_X(x_1, x_2) + d_Y(y_1, y_2)$.
Also for every $f \in \Aut(X)$ and $g \in \Aut(Y)$ let $f \times g$ denote the map $\tuple{x, y} \mapsto \tuple{f(x), g(y)}$.

\begin{proposition} \label{thm:homogeneous_product}
	Suppose that $X$ and $Y$ are nonempty metric spaces such that the map $+\maps \Dist(X) \times \Dist(Y) \to \Dist(X \times_1 Y) \subseteq [0, \infty)$ is injective (and so bijective).
	Then $\tuple{f, g} \mapsto f \times g$ is a group isomorphism $\Aut(X) \times \Aut(Y) \to \Aut(X \times_1 Y)$.
	Moreover, $X \times_1 Y$ is (uniquely) $n$-homogeneous/ultrahomogeneous if and only if $X$ and $Y$ are.
\end{proposition}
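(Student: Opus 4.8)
The plan is to establish the group isomorphism first --- its only nontrivial ingredient being surjectivity of $\tuple{f,g} \mapsto f \times g$ --- and then to read off every homogeneity statement from it together with Proposition~\ref{thm:ultrahom_subspace}. Write $Z = X \times_1 Y$.

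The crucial observation is that the injectivity hypothesis makes the two ``same-fibre'' relations on $Z$ definable from the metric. Applying injectivity of $+$ to the identity $d_Z(\tuple{x_1,y_1},\tuple{x_2,y_2}) = d_X(x_1,x_2) + d_Y(y_1,y_2)$, compared against the trivial splittings $b = 0 + b$ and $a = a + 0$, one gets: $x_1 = x_2$ iff $d_Z(\tuple{x_1,y_1},\tuple{x_2,y_2}) \in \Dist(Y)$, and $y_1 = y_2$ iff that distance lies in $\Dist(X)$; in particular $\Dist(X) \cap \Dist(Y) = \set{0}$, so the two conditions are mutually exclusive off the diagonal. Let $E_1$ (classes $\set x \times Y$) and $E_2$ (classes $X \times \set y$) be the corresponding equivalence relations. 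Since $\Dist(X)$ and $\Dist(Y)$ are fixed subsets of $[0,\infty)$, both relations are preserved by every isometry between subsets of $Z$. This is the main, and essentially the only, conceptual step; the rest is bookkeeping.

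For surjectivity, let $\Phi \in \Aut(Z)$. It permutes the $E_1$-classes and permutes the $E_2$-classes, and as every point is the unique common point of an $E_1$-class and an $E_2$-class, $\Phi$ is induced by bijections $f\maps X \to X$ and $g\maps Y \to Y$ through $\Phi(\tuple{x,y}) = \tuple{f(x),g(y)}$. Applying injectivity of $+$ again, now to $d_X(x_1,x_2) + d_Y(y_1,y_2) = d_X(f(x_1),f(x_2)) + d_Y(g(y_1),g(y_2))$, shows $f$ and $g$ are distance-preserving, hence lie in $\Aut(X)$ and $\Aut(Y)$; so $\Phi = f \times g$. The map $\tuple{f,g} \mapsto f \times g$ is plainly a homomorphism, and it is injective because, $X$ and $Y$ being nonempty, freezing a coordinate recovers each factor from $f \times g$. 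This yields the isomorphism $\Aut(X) \times \Aut(Y) \cong \Aut(Z)$.

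For the homogeneity equivalence, ``$\Rightarrow$'' is immediate from Proposition~\ref{thm:ultrahom_subspace}: each fibre $\set{x_0} \times Y \cong Y$ is quasi-invariant, since an automorphism sends it to an $E_1$-class and two $E_1$-classes that meet are equal, and symmetrically for $X$. For ``$\Leftarrow$'', let $\phi\maps A \to B$ be an isometry between finite subsets of $Z$. As $\phi$ preserves $E_1$ and $E_2$ on $A$, the first coordinate of $\phi(p)$ depends only on that of $p$, and likewise for the second, so $\phi$ factors as $\tuple{x,y} \mapsto \tuple{\phi_X(x),\phi_Y(y)}$ with $\phi_X$ on $\pi_X[A]$ and $\phi_Y$ on $\pi_Y[A]$; injectivity of $+$ makes $\phi_X,\phi_Y$ isometries onto $\pi_X[B]$, $\pi_Y[B]$, with $\card{\pi_X[A]},\card{\pi_Y[A]} \le \card A$. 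If $X,Y$ are $n$-homogeneous and $\card A \le n$ --- or ultrahomogeneous, with $\card A$ unrestricted --- extend $\phi_X,\phi_Y$ to $F \in \Aut(X)$, $G \in \Aut(Y)$, so that $F \times G$ extends $\phi$. In the uniquely $n$-homogeneous case, combine with the isomorphism: by surjectivity every automorphism of $Z$ has the form $F \times G$, and such a map extends $\phi$ precisely when $F|_{\pi_X[A]} = \phi_X$ and $G|_{\pi_Y[A]} = \phi_Y$; since unique $n$-homogeneity implies unique $k$-homogeneity for $k \le n$ and $\pi_X[A],\pi_Y[A]$ are nonempty of size $\le n$, both $F$ and $G$, hence the extension, are unique.
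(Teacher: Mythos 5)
Your proposal is correct and takes essentially the same approach as the paper: both use injectivity of $+$ to split distances coordinatewise, factor every partial isometry of $X \times_1 Y$ into coordinate isometries, and then combine quasi-invariance of the fibres (Proposition~\ref{thm:ultrahom_subspace}) with products of (unique) extensions. Your metric definability of the fibre relations (distance in $\Dist(Y)$ iff equal $X$-coordinates) and the separate class-permutation argument for surjectivity are only a repackaging of the paper's direct use of injectivity, with a slightly more explicit treatment of uniqueness.
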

\begin{proof}
	Distances in $X \times_1 Y$ are of the form $d(\tuple{x_1,y_1},\tuple{x_2,y_2})=d_X(x_1,x_2)+d_Y(y_1,y_2)$. Therefore, since sums of the form $d_X(x_1,x_2)+d_Y(y_1,y_2)$ injectively map into $[0,\infty)$, there will be a one-to-one correspondence between $\Dist(X \times_1 Y)$ and $\Dist(X)\times\Dist(Y)$.
	
	For every $f, f' \in \Aut(X)$ and $g, g' \in \Aut(Y)$ we have
	\begin{align*}
		d((f \times g)(x_1, y_1), (f \times g)(x_2, y_2)) 
			&= d_X(f(x_1), f(x_2)) + d_Y(g(y_1), g(y_2)) \\
			&= d_X(x_1, x_2) + d_Y(y_1, y_2)
			= d(\tuple{x_1, y_1}, \tuple{x_2, y_2}),
	\end{align*}
	and $(f \times g) \circ (f' \times g') = (f \circ f') \times (g \circ g')$.
	Hence, $f \times g \in \Aut(X \times_1 Y)$ and $\tuple{f, g} \mapsto f \times g$ is a group homomorphism $\Aut(X) \times \Aut(Y) \to \Aut(X \times_1 Y)$.
	
	Let $\pi_X\maps X \times_1 Y \to X$ and $\pi_Y\maps X \times_1 Y \to Y$ denote the projections.
	For every $f \in \Aut(X)$, $g \in \Aut(Y)$, $x \in X$, and $y \in Y$ we have $\pi_X((f \times g)(x, y)) = f(x)$ and $\pi_Y((f \times g)(x, y)) = g(y)$, and hence the homomorphism $\tuple{f, g} \mapsto f \times g$ is injective.
	
	To show that it is also surjective and to show the remaining claims, let $\phi\maps A \to B$ be an isometry of some subspaces $A, B \subseteq X \times_1 Y$.
	We prove that $\phi = (\phi_X \times \phi_Y)|_A$ for some isometries $\phi_X\maps \pi_X[A] \to \pi_X[B]$ and $\phi_Y\maps \pi_Y[A] \to \pi_Y[B]$.
	To that end, let us look at
	\[
		d(\phi(x_1, y_1), \phi(x_2, y_2)) = d(\tuple{x_1, y_1}, \tuple{x_2, y_2}) = d_X(x_1, x_2) + d_Y(y_1, y_2)
	\]
	and note that if $\phi(x_1,y_1)=:\tuple{a,b}$ and $\phi(x_2,y_2)=:\tuple{c,d}$, then
	\[
		d_X(x_1,x_2)+d_Y(y_1,y_2)=d_{X \times_1 Y}(\tuple{a,b},\tuple{c,d})=d_X(a,c)+d_Y(b,d)
	\]
	and it follows from our injectivity assumption of $+$ on $\Dist(X)\times\Dist(Y)$ that $d_X(x_1,x_2)=d_X(a,c)$ and $d_Y(y_1,y_2)=d_Y(b,d)$.
	In particular, if $x_1=x_2$, then $a=c$, so for all pairs of points $\tuple{x,y_1},\tuple{x,y_2}$ with identical $X$-components, we get that the $X$-components of the images under $\phi$ coincide as well: $\pi_X(\phi(x,y_1)) = \pi_X(\phi(x,y_2)) =: \phi_X(x)$.
	Also, $d_X(x_1, x_2) = d_X(a, c)$ shows that $\phi_X$ is an isometry.
	Analogously, we obtain the isometry $\phi_Y$.
	
	Hence, $\times\maps \Aut(X) \times \Aut(Y) \to \Aut(X \times_1 Y)$ is surjective.
	It also follows that every subspace $X \times \set{y} \subseteq X \times_1 Y$ (which is isometric to $X$) is quasi-invariant, and so if $X \times_1 Y$ is (uniquely) $n$-homogeneous/ultrahomogeneous, so is $X$ by Proposition~\ref{thm:ultrahom_subspace}, and similarly for $Y$.
	Finally, if $\phi_X$ and $\phi_Y$ have (unique) extensions $\Phi_X \in \Aut(X)$ and $\Phi_Y \in \Aut(Y)$, then $\Phi_X \times \Phi_Y$ is a (unique) extension of $\phi$, so if $X$ and $Y$ are (uniquely) $n$-homogeneous/ultrahomogeneous, then so is $X \times_1 Y$.
\end{proof}

\begin{example}\label{circle-graph}
  For every $n\in\NN_+$, let $C_n:=\tuple{V_n,E_n,d_n}$ be the $n$-point circle graph with simple graph distance, where $V_n:=\{0,\dotsc,n-1\}$ is the vertex set, the set of edges $E_n:=\{\{i,i+1\} \mod n\colon 0\leqslant i< n\}$ only connects consecutive vertices as well as the first and last vertex with each other, and
  \[
    d_n\maps V_n\times V_n\to\left[0,\frac n2\right]\cap\NN\colon \tuple{i,j}\mapsto\min\{|i - j|,n - |i - j|\}
  \]
  counts the minimum number of edges in $E_n$ you have to cross to get from $i$ to $j$.

  The metric space $C_n$ is ultrahomogeneous.
\end{example}

\begin{proof}
  For any $C_n$, it is clear that the automorphism group $\Aut(C_n)$ contains all rotations around the vertex set $\Phi_i(k)=(i+k\mod n)$ and all reflections across a vertex $i\in V_n$, $\Psi_i(k)=(2i-k\mod n)$.
  
  Each isometry $\phi\colon A\to B$ with $A,B\subseteq C_n$ can be extended to at least one such rotation or reflection since, after choosing any two points $x\neq y\in A$ (that are not antipodal in the case of even $n$), all other points $z\in C_n$ can be uniquely determined from their distances to $x$ and $y$, and thus the same holds for $\phi(z)$.
  Therefore, it suffices to consider $\card{A}\leq 2$, and in those cases it is easy to see that rotating one point $x$ onto its image and then potentially reflecting across $\phi(x)$ will give an automorphism mapping $A$ to $B$.
\end{proof}

\section{Isosceles-free spaces} \label{sec:iso-free}

In the following, we will study metric spaces $X$ with the property that all distances from a given point are distinct, i.e. $d(x,y)\neq d(x,z)$ for all distinct $x,y,z\in X$. We will refer to such spaces as \emph{isosceles-free} spaces since the condition is equivalent to ``X does not contain any isosceles triangles''.
The isosceles-free spaces were introduced under the name \emph{star-rigid} by Janoš and Martin~\cite{JanosMartin78}.

\begin{observation}
	Every isosceles-free space is zero-dimensional, as observed by Hattori~\cite[Theorem~2]{Hattori90}:
	Every ball $B(x, r)$ has at most one point at the boundary, and every subspace $C(x, y) := \set{z: d(z, x) < d(z, y)}$ is clopen.
	Hence, if $B(x, r)$ has exactly one point $y$ at the boundary, $B(x, r) \cap C(x, y)$ is a basic clopen set, and otherwise $B(x, r)$ is already a basic clopen set.
\end{observation}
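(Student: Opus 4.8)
The statement itself lists the two ingredients, so the plan is to verify them and then assemble a clopen base for the topology. I would start with the purely metric fact --- not using isosceles-freeness at all --- that each set $C(x, y) = \set{z : d(z, x) < d(z, y)}$ is open: if $d(z, x) < d(z, y)$ and $\varepsilon := \tfrac{1}{2}\bigl(d(z, y) - d(z, x)\bigr) > 0$, then for every $w$ with $d(w, z) < \varepsilon$ the triangle inequality gives $d(w, x) < d(z, x) + \varepsilon$ and $d(w, y) > d(z, y) - \varepsilon$, hence $d(w, x) < d(w, y)$; thus the $\varepsilon$-ball about $z$ lies in $C(x, y)$.

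Next I would use isosceles-freeness in two places. First, for distinct $x, y$ there is \emph{no} point $z$ with $d(z, x) = d(z, y)$: for $z \notin \set{x, y}$ this would be an isosceles triangle, and $z = x$ or $z = y$ is excluded since $d(x, y) > 0$. Therefore $X = C(x, y) \sqcup C(y, x)$ is a partition into two open sets, so each of them is clopen (with $C(x, x) = \emptyset$ clopen trivially). Second, for $r > 0$ the sphere $S(x, r) := \set{z : d(x, z) = r}$ has at most one point, since two distinct such points together with $x$ would form an isosceles triangle; and $S(x, 0) = \set{x}$.

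Finally, since the open balls centred at a point form a neighbourhood base at that point, it suffices to show that every such ball $B = \set{z : d(p, z) < \varepsilon}$ (with $\varepsilon > 0$) contains a clopen neighbourhood of $p$; this yields a clopen base and hence zero-dimensionality. If $S(p, \varepsilon) = \emptyset$, then $X \setminus B = \set{z : d(p, z) > \varepsilon}$ is open, so $B$ is already clopen. If $S(p, \varepsilon) = \set{y}$, I would take $B \cap C(p, y)$: it is open as an intersection of open sets and contains $p$ (because $d(p, p) = 0 < \varepsilon = d(p, y)$, so $p \in C(p, y)$), while its complement $\set{z : d(p, z) \geq \varepsilon} \cup C(y, p)$ equals $\set{z : d(p, z) > \varepsilon} \cup C(y, p)$ --- since $d(y, y) = 0 < \varepsilon = d(y, p)$ puts $y$ into $C(y, p)$ --- and is therefore open as a union of two open sets; hence $B \cap C(p, y)$ is clopen. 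The one point deserving attention is this last case: the unique boundary point $y$ must land in $C(y, p)$ rather than in $C(p, y)$, and it is exactly this, together with the clopenness of the sets $C(\cdot, \cdot)$, that lets $B \cap C(p, y)$ behave like $B$ with its single boundary point removed while still retaining the centre $p$. Everything else is routine.
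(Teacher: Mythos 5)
Your proposal is correct and follows exactly the route sketched in the paper's observation (which is attributed to Hattori): spheres are at most singletons and bisectors are empty by isosceles-freeness, the sets $C(x,y)$ are clopen, and each open ball either is already clopen or becomes clopen after intersecting with $C(p,y)$ for the unique sphere point $y$. Your write-up merely supplies the routine verifications (openness of $C(x,y)$ via the triangle inequality, openness of the complement in the singleton-sphere case) that the paper leaves implicit.
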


\begin{proposition}\label{unique-isometric-embedding}
	If $X$ is any metric space and $Y$ is isosceles-free, then for every $x\in X$, $y\in Y$ there exists at most one isometric embedding $f\colon X\rightarrow Y$ which maps $x$ to $y$.
\end{proposition}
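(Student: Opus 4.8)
The plan is to show that an isometric embedding into an isosceles-free space is rigidly determined by the image of a single point. So suppose $f, g \colon X \to Y$ are two isometric embeddings with $f(x) = g(x) = y$; the goal is to conclude $f = g$.

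First I would record the key property of the target space: since $Y$ is isosceles-free, for any fixed $p \in Y$ the distance function $w \mapsto d_Y(p, w)$ is injective on $Y$. Indeed, if $w, w'$ are distinct points of $Y$ with $d_Y(p, w) = d_Y(p, w')$, then the three points $p, w, w'$ cannot all be distinct — that would be an isosceles triangle — so one of $w, w'$ equals $p$; but then the common distance is $0$, which forces the other one to equal $p$ as well, contradicting $w \neq w'$.

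Then, for an arbitrary $z \in X$, I compute, using that $f$ and $g$ preserve distances and that $f(x) = g(x) = y$,
\[
  d_Y(y, f(z)) = d_Y(f(x), f(z)) = d_X(x, z) = d_Y(g(x), g(z)) = d_Y(y, g(z)).
\]
By the injectivity established above (with $p = y$), this gives $f(z) = g(z)$. Since $z \in X$ was arbitrary, $f = g$, which is exactly the claimed uniqueness.

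There is essentially no obstacle in this argument; the only point that needs a moment's care is the degenerate case in the injectivity step, namely when one of the two equidistant points is the apex $y$ itself, which is disposed of by noting that the common distance is then zero.
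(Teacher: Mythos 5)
Your argument is correct and is essentially the paper's own proof: both compare $d_Y(y, f(z))$ and $d_Y(y, g(z))$ via the preserved distance $d_X(x,z)$ and conclude $f(z)=g(z)$ from the injectivity of the distance map at $y$, which is exactly the isosceles-free condition. Your extra remark on the degenerate case (when one of the equidistant points coincides with $y$) is a harmless refinement of the same idea.
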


\begin{proof}
	Pick two isometric embeddings $f,g$ such that $f(x)=y=g(x)$.
	Hence for any $x'\in X$ we have that $d(y, f(x'))=d(x, x')=d(y, g(x'))$. But this means that $f(x')=g(x')$ since otherwise, we would have a non-trivial isosceles triangle in $Y$.
\end{proof}

\begin{corollary} \label{unique-automorphism}
	Every $1$-homogeneous isosceles-free space $X$ is uniquely $1$-homogeneous, i.e. for every $x,y\in X$ there exists precisely one $f\in\Aut(X)$ such that $f(x) = y$.
	
	\begin{proof}
		Since $X$ is $1$-homogeneous, there exists at least one $f\in\Aut(X)$ mapping $x$ to $y$, and according to Proposition~\ref{unique-isometric-embedding}, there exists at most one.
	\end{proof}
\end{corollary}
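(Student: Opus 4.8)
The plan is to obtain this immediately from Proposition~\ref{unique-isometric-embedding}, which already carries essentially all the content; the corollary is just the combination of that proposition with the existence clause built into $1$-homogeneity. Accordingly, I would split the claim ``precisely one $f \in \Aut(X)$ with $f(x) = y$'' into existence and uniqueness and treat these separately.

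For existence, I invoke the hypothesis that $X$ is $1$-homogeneous. The map sending $x$ to $y$ is an isometry between the one-point subspaces $\set{x}$ and $\set{y}$ of $X$, and $\card{\set{x}} = 1$, so by $1$-homogeneity it extends to some $F \in \Aut(X)$ with $F(x) = y$. For uniqueness, I apply Proposition~\ref{unique-isometric-embedding} with \emph{both} of the two spaces taken to be $X$ itself; this is legitimate precisely because $X$ is isosceles-free and hence may serve as the target space in that proposition. Since every automorphism of $X$ is in particular an isometric embedding $X \to X$, any two automorphisms sending $x$ to $y$ are two isometric embeddings $X \to X$ agreeing at the point $x$, so they coincide by the proposition. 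Thus the extension $F$ found above is the unique one.

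There is no real obstacle: the work has been done in Proposition~\ref{unique-isometric-embedding}, whose argument in turn rests on the remark that two isometric embeddings into an isosceles-free space that agree at a point but differ at another point $x'$ would send $x'$ to two distinct points equidistant from the common image of that point, yielding a forbidden isosceles triangle. The only points that require a word of care are the trivial observations that $\card{\set{x}} \leq 1$ makes $1$-homogeneity exactly the right hypothesis for the existence step, and that ``at most one isometric embedding $X \to X$'' specializes to ``at most one automorphism''.
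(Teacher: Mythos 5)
Your proposal is correct and follows exactly the paper's argument: existence of the extension comes from $1$-homogeneity applied to the one-point isometry $x \mapsto y$, and uniqueness comes from Proposition~\ref{unique-isometric-embedding} with both the source and the (isosceles-free) target taken to be $X$ itself. You merely spell out the specialization of that proposition to automorphisms, which the paper leaves implicit.
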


\begin{proposition} \label{ultra-free}
	Every $1$-homogeneous isosceles-free space $X$ is ultrahomogeneous.
\end{proposition}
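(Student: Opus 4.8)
The plan is to exploit the extreme rigidity of isometric embeddings into isosceles-free spaces recorded in Proposition~\ref{unique-isometric-embedding}: such an embedding is uniquely determined by the image of a single point, so a single point's worth of homogeneity should automatically bootstrap to full ultrahomogeneity.

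Concretely, let $f\maps A \to B$ be an isometry between subspaces $A, B \subseteq X$; I must produce $F \in \Aut(X)$ with $F|_A = f$. If $A = \emptyset$ the identity works, so assume $A \neq \emptyset$ and fix a point $a \in A$. Since $X$ is $1$-homogeneous, there is some $F \in \Aut(X)$ with $F(a) = f(a)$. Now both $f$ and the restriction $F|_A$ are isometric embeddings of the metric space $A$ into $X$, and $X$ is isosceles-free, so Proposition~\ref{unique-isometric-embedding} (applied with domain $A$, target $X$, pinning $a \mapsto f(a)$) says there is at most one such embedding. Hence $F|_A = f$, and $F$ is the desired automorphism. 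As this works for every subspace $A$, the space $X$ is $n$-homogeneous for all $n$, i.e. ultrahomogeneous.

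There is essentially no obstacle here: the whole content is already packaged in Proposition~\ref{unique-isometric-embedding}, and the proof is a one-line application of it. The only care needed is to invoke that proposition with the roles of the spaces as above — the subspace $A$ (of arbitrary size, which is irrelevant) as the domain and the ambient isosceles-free space $X$ as the target — and to treat the empty domain separately, since one needs a point of $A$ to pin down. One may additionally remark, using Corollary~\ref{unique-automorphism}, that the extension $F$ is in fact unique, so $X$ is even uniquely ultrahomogeneous; but for the statement as phrased only existence is required.
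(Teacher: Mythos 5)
Your argument is correct and is essentially identical to the paper's proof: pick $a \in A$, use $1$-homogeneity to get $F \in \Aut(X)$ with $F(a) = f(a)$, and apply Proposition~\ref{unique-isometric-embedding} to conclude $F|_A = f$. Your observation that the argument works for subspaces of arbitrary size is also made in the paper (Remark following the proposition, on absolute homogeneity), so nothing is missing.
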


\begin{proof}
	Let $i\colon A\rightarrow B$ be an isometry from a finite subset $A\subset X$ to $B\subset X$. Choose any $x\in A$ and find the automorphism $f$ which maps $x$ to $i(x)$. Then, $i$ and $f|_A$ are both isometric embeddings from $A$ into $X$ which map $x$ to $i(x)$. According to Proposition~\ref{unique-isometric-embedding}, this means they are equal.
\end{proof}

\begin{remark}
	Observe that the proof of Proposition \ref{ultra-free} shows that we have homogeneity not only for finite substructures, but for all substructures of $X$. This is called \emph{absolute homogeneity} by Piotr Niemiec, studied in his recent preprint \cite{Niemiec}.
\end{remark}

Since $1$-homogeneity and ultrahomogeneity are equivalent for isosceles-free spaces, we will call them just \emph{homogeneous isosceles-free spaces}.

\begin{proposition} \label{unique-two-homog}
	A metric space $X$ is homogeneous isosceles-free if and only if it is uniquely $2$-homogeneous.
	
	\begin{proof}
		Recall that being uniquely $2$-homogeneous is equivalent to being $2$-homogeneous and uniquely $1$-homogeneous.
		Suppose $X$ is $1$-homogeneous and isosceles-free.
		By Proposition~\ref{ultra-free}, $X$ is even ultrahomogeneous.
		By Corollary~\ref{unique-automorphism}, $X$ is uniquely $1$-homogeneous.
		
		On the other hand, suppose that $X$ is uniquely $2$-homogeneous, and let $x, y, z \in X$ be such that $d(x, y) = d(x, z)$.
		By $2$-homogeneity there is $f \in \Aut(X)$ with $f(x) = x$ and $f(y) = z$.
		By unique $1$-homogeneity, $f$ is the unique automorphism fixing $x$, and so $f = \id_X$ and $y = z$, so $X$ is isosceles-free.
	\end{proof}
\end{proposition}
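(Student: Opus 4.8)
The plan is to prove the two implications separately, each time invoking the remark above that being uniquely $2$-homogeneous is the same as being $2$-homogeneous and uniquely $1$-homogeneous.

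For the implication from left to right I would argue purely by assembling earlier results. Assume $X$ is $1$-homogeneous and isosceles-free. Proposition~\ref{ultra-free} then promotes $1$-homogeneity to ultrahomogeneity, so $X$ is in particular $2$-homogeneous, and Corollary~\ref{unique-automorphism} says $X$ is uniquely $1$-homogeneous; combining the two gives unique $2$-homogeneity.

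For the converse I would assume $X$ is uniquely $2$-homogeneous and note that it is trivially $1$-homogeneous, so the only thing left to check is that $X$ has no isosceles triangles. Suppose for contradiction that $x, y, z \in X$ satisfy $y \neq z$ and $d(x, y) = d(x, z)$. The map $\set{x, y} \to \set{x, z}$ fixing $x$ and sending $y$ to $z$ is then a bijective isometry, so by $2$-homogeneity it extends to some $f \in \Aut(X)$ with $f(x) = x$ and $f(y) = z$. Since $\id_X$ also fixes $x$, unique $1$-homogeneity applied to the one-point subset $\set{x}$ forces $f = \id_X$, and hence $y = z$, contradicting the choice of $y, z$. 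Thus $X$ is isosceles-free.

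I do not expect a genuine obstacle here: both directions are short. The only place demanding a moment's attention is the converse, where one has to observe that the identity and the two-point partial isometry witnessing the isosceles triangle both extend $x \mapsto x$, so uniqueness of the $1$-homogeneous extension collapses them — the contradiction then drops out immediately.
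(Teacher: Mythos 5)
Your proposal is correct and follows essentially the same route as the paper: the forward direction combines Proposition~\ref{ultra-free} with Corollary~\ref{unique-automorphism}, and the converse extends the two-point isometry fixing $x$ and sending $y$ to $z$, then uses unique $1$-homogeneity to force the extension to be $\id_X$. The only cosmetic difference is that you phrase the converse as a contradiction, while the paper concludes $y = z$ directly.
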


Recall that a \emph{Boolean group} is a group $G$ such that $g^2 = 1$, i.e. $g\inv = g$, for every $g \in G$.
It follows that $G$ is Abelian as $g h g\inv h\inv = (gh)(gh) = 1$ and so $gh = hg$ for every $g, h \in G$.

\begin{proposition} \label{iso-free_boolean}
	For every isosceles-free space $X$ the isometry group $\Aut(X)$ is Boolean.
\end{proposition}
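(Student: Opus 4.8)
The plan is to prove the slightly stronger pointwise statement: for every $f \in \Aut(X)$ we have $f \cmp f = \id_X$, which is exactly the assertion that $\Aut(X)$ is a Boolean group (the fact that a Boolean group is automatically Abelian was already recorded before the statement). So fix an arbitrary $f \in \Aut(X)$; I want to show $f^2(x) = x$ for every $x \in X$.

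The key observation is that the point $f(x)$ is equidistant from $x$ and from $f^2(x)$. Indeed, put $y := f(x)$ and $z := f(y) = f^2(x)$; since $f$ is an isometry,
\[
	d(y, z) = d(f(x), f(y)) = d(x, y).
\]
Now suppose $x$, $y$, $z$ were three pairwise distinct points. Then, because $X$ is isosceles-free, the three pairwise distances among them would all be different — in particular $d(y,x) \neq d(y,z)$ — contradicting the displayed equality (read with apex $y$). Hence at least two of $x$, $y$, $z$ coincide.

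It remains to check that in each degenerate case we still get $f^2(x) = x$: if $x = y$ then $f(x) = x$, hence $f^2(x) = x$; if $y = z$ then $f(x) = f^2(x)$, and applying $f\inv$ gives $x = f(x)$, again $f^2(x) = x$; and if $x = z$ then $f^2(x) = x$ directly. Since $x \in X$ was arbitrary, $f^2 = \id_X$, so $\Aut(X)$ is Boolean.

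There is essentially no obstacle here beyond being careful with the case analysis: the only subtlety is to confirm that the collapsing of two of $x$, $y$, $z$ always forces $f^2(x) = x$ and never merely something weaker. (Equivalently, one may phrase the argument in terms of the orbits of the cyclic group $\tuple{f}$ acting on $X$: the computation above shows every such orbit has at most two elements, so $f^2$ fixes each of them.)
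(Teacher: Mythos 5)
Your proof is correct and follows exactly the paper's argument: consider the triple $x$, $f(x)$, $f^2(x)$, note $d(x,f(x)) = d(f(x),f^2(x))$ since $f$ is an isometry, and conclude $x = f^2(x)$ from the isosceles-free condition. The paper states this in two lines; your explicit case analysis of the degenerate configurations is harmless but not needed, since the isosceles-free condition (distances from the apex $f(x)$ are injective) already yields $x = f^2(x)$ in all cases.
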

\begin{proof}
	For every $f \in \Aut(X)$ and $x \in X$ we have $d(x, f(x)) = d(f(x), f(f(x)))$, and hence $x = f(f(x))$ since $X$ is isosceles-free.
	Hence $f^2 = \id_X$.
\end{proof}

\begin{observation} \label{thm:evaluation}
	Let $X$ be a metric space.
	For every $a \in X$ let 
	\begin{itemize}
		\item $D_a\maps X \to \Dist(X)$ denote the \emph{distance map} $x \mapsto d(x, a)$,
		\item $E_a\maps \Aut(X) \to X$ denote the \emph{evaluation map} $f \mapsto f(a)$.
	\end{itemize}
	We have the following reformulation of the properties considered.
	\begin{enumerate}
		\item $X$ is isosceles-free if and only if the maps $D_a$ are injective, and in that case it follows that the maps $E_a$ are injective.
		\item $X$ is $1$-homogeneous if and only if the maps $E_a$ are surjective, and in that case it follows that the maps $D_a$ are surjective.
		\item $X$ is uniquely $1$-homogeneous if and only if the maps $E_a$ are bijective.
		\item $X$ is homogeneous isosceles-free if and only if the maps $D_a$ and $E_a$ are bijective.
	\end{enumerate}
	
	\begin{proof}
		Clearly the maps $D_a$ being injective is essentially the definition of being isosceles-free.
		The maps $E_a$ are injective and surjective if and only if for every $x, y \in X$ there exists at most and at least, respectively, one $f \in \Aut(X)$ such that $f(x) = y$.
		Proposition~\ref{unique-isometric-embedding} says that the first option is true for isosceles-free spaces.
		Also for every $r \in \Dist(X)$ there are $x, y \in X$ with $d(x, y) = r$, and so if $X$ is $1$-homogeneous, for every $a \in X$ there is $f \in \Aut(X)$ with $f(x) = a$, and so $d(a, f(y)) = r$ and the maps $E_a$ are surjective.
		The rest is clear.
	\end{proof}
\end{observation}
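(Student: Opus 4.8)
The plan is to unpack each equivalence directly from the definitions, the only nontrivial input being Proposition~\ref{unique-isometric-embedding}. For~(1), I would first note that injectivity of $D_a$ for a fixed $a$ says $d(x, a) = d(y, a)$ implies $x = y$; the cases $x = a$ or $y = a$ are automatic (a point at distance $0$ from $a$ equals $a$), so this is equivalent to ``$d(a, x) \neq d(a, y)$ for all distinct $x, y \neq a$'', and letting $a$ range over $X$ gives exactly isosceles-freeness. For the implied injectivity of $E_a$, suppose $f, g \in \Aut(X)$ satisfy $f(a) = g(a)$; since $X$ is isosceles-free, applying Proposition~\ref{unique-isometric-embedding} with $Y = X$ to the isometric embeddings $f$ and $g$ (both sending $a$ to the common value) yields $f = g$.

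For~(2), I would observe that an isometry between subspaces of size at most $1$ is determined by a point together with a freely chosen image, so $1$-homogeneity unwinds to: for all $a, y \in X$ there is $F \in \Aut(X)$ with $F(a) = y$ --- precisely surjectivity of every $E_a$. To deduce surjectivity of $D_a$, take $r \in \Dist(X)$, choose $u, v \in X$ with $d(u, v) = r$, use $1$-homogeneity to obtain $F \in \Aut(X)$ with $F(u) = a$, and compute $D_a(F(v)) = d(F(v), F(u)) = d(u, v) = r$.

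Part~(3) is immediate from the definition: uniquely $1$-homogeneous means there is exactly one $F \in \Aut(X)$ with $F(a) = y$ for each $a, y \in X$, i.e. every $E_a$ is bijective. Part~(4) then follows by assembling the previous items: ``homogeneous isosceles-free'' means $1$-homogeneous and isosceles-free, so~(1) and~(2) give that each $D_a$ and each $E_a$ is both injective and surjective; conversely, bijectivity of the $D_a$ gives injectivity, hence isosceles-freeness by~(1), and bijectivity of the $E_a$ gives surjectivity, hence $1$-homogeneity by~(2).

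I do not expect a genuine obstacle here, as the statement is essentially a dictionary between the homogeneity properties and properties of two families of maps. The two points needing a line of care are the degenerate-case bookkeeping in~(1) (so that injectivity of $D_a$, which a priori quantifies over all pairs including $a$ itself, matches the isosceles-free condition, which quantifies over triples of distinct points) and, dually, the remark in~(2) that the official definition of $1$-homogeneity via subspaces of size $\leq 1$ really is the pointwise-transitivity statement.
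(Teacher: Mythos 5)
Your proposal is correct and follows essentially the same route as the paper: part (1) via Proposition~\ref{unique-isometric-embedding} applied to two automorphisms agreeing at $a$, part (2) by transporting a pair realizing a given distance $r$ to $a$ via an automorphism, and (3), (4) by unwinding definitions. The extra care you take with the degenerate cases in (1) and with unwinding $1$-homogeneity for subspaces of size $\leq 1$ is fine but not a departure from the paper's argument.
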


\begin{corollary} \label{thm:power_two}
  For every finite homogeneous isosceles-free metric space $X$ we have $\card{X} = 2^m$ for some $m \in \omega$.
  
  \begin{proof}
    We have a bijection $E_a\maps \Aut(X) \to X$ and $\Aut(X)$ is a Boolean group by Proposition~\ref{iso-free_boolean}.
  \end{proof}
\end{corollary}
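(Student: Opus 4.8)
The plan is to push the cardinality question off of $X$ and onto its automorphism group, where Boolean structure forces a power of two. First I would fix any point $a \in X$ and invoke Observation~\ref{thm:evaluation}\,(4): since $X$ is homogeneous isosceles-free, the evaluation map $E_a \maps \Aut(X) \to X$, $f \mapsto f(a)$, is a bijection. As $X$ is finite, this already gives that $\Aut(X)$ is finite with $\card{\Aut(X)} = \card{X}$.

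Next I would recall from Proposition~\ref{iso-free_boolean} that $\Aut(X)$ is a Boolean group, hence (as noted just before that proposition) a finite abelian group in which every element $g$ satisfies $g^2 = 1$. The conclusion then follows from the standard fact that such a group has order a power of $2$: one may regard $\Aut(X)$ as a vector space over the two-element field and take a (finite) basis of size $m \in \omega$, so that $\card{\Aut(X)} = 2^m$; combined with the first paragraph this yields $\card{X} = 2^m$.

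There is no real obstacle here — the whole content sits in the two previously established facts. If one wishes to avoid citing the structure theorem for finite abelian groups, the power-of-two claim admits a one-line induction on $\card{G}$ for a finite Boolean group $G$: if $G \neq \set{1}$, pick $g \neq 1$; then $\set{1, g}$ is a subgroup of order $2$, and since $G$ is abelian the quotient $G / \set{1, g}$ is again Boolean and of half the size, so by induction $\card{G} = 2^m$. Either way the argument is immediate, so the statement is genuinely a corollary of Observation~\ref{thm:evaluation} and Proposition~\ref{iso-free_boolean}.
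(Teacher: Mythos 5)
Your argument is exactly the paper's: the bijection $E_a\maps \Aut(X)\to X$ from Observation~\ref{thm:evaluation} together with Proposition~\ref{iso-free_boolean} reduces everything to the standard fact that a finite Boolean group has order a power of two. You simply spell out that last (implicit) step, so the proposal is correct and matches the paper's proof.
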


\begin{example}
	Let $X = \set{1, 2, 3, 4}$ and let $R = \set{a, b, c}$ where $a, b, c$ are any positive real numbers forming a triangle.
	There are exactly three decompositions of $X$ into two pairs of points: $Y_a = \set{\set{1, 2}, \set{3, 4}}$, $Y_b = \set{\set{1, 3}, \set{2, 4}}$, $Y_c = \set{\set{1, 4}, \set{2, 3}}$.
	We put $d(x, y) = r$ if and only if $\set{x, y} \in Y_r$, for $x \neq y \in X$ and $r \in R$.
	This gives a simple example of a homogeneous isosceles-free space, as in Figure~\ref{fig:tetraedr}.
\end{example}

	\begin{figure}[!ht]
		\centering
		
		\begin{tikzpicture}[every node/.style={draw,shape=circle,fill,minimum size=2pt,inner sep=0pt,outer sep=1pt}]
			\definecolor{c-d1}{RGB}{255,0,0}
			\definecolor{c-d2}{RGB}{0,255,0}
			\definecolor{c-d3}{RGB}{0,0,255}
			
			\node (v1) at (0, 0) {};
			\node (v2) at (1.3, -0.7) {};
			\node (v3) at (0, 1.5) {};
			\node (v4) at (-1.3, -0.7) {};
			
			\draw[c-d1, thick] (v1)--(v2);
			\draw[c-d1, thick] (v3)--(v4);
			\draw[c-d2, thick] (v1)--(v3);
			\draw[c-d2, thick] (v2)--(v4);
			\draw[c-d3, thick] (v1)--(v4);
			\draw[c-d3, thick] (v2)--(v3);

			\draw[color=c-d1,thick] (3,1)--(3.2,1);
			\node[draw=none, fill=none] at (3.5,1) {$a$};
			\draw[color=c-d2,thick] (3,0.5)--(3.2,0.5);
			\node[draw=none, fill=none] at (3.5,0.5) {$b$};
			\draw[color=c-d3,thick] (3,0)--(3.2,0);
			\node[draw=none, fill=none] at (3.5,0) {$c$};
		\end{tikzpicture}
		
		\caption{A four-point homogeneous isosceles-free space.}
		\label{fig:tetraedr}
	\end{figure}
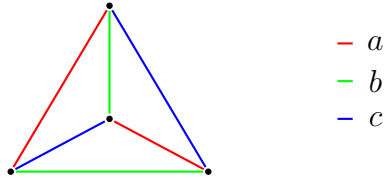

\begin{example}
  We consider the set $X=\{1,2,3,4,5,6\}$ and pick five pairwise distinct numbers $d_1,d_2,d_3,d_4,d_5\in [1,2]$ and set the distances as indicated in Figure~\ref{fig:hexagon}.
	
  Since all distances are between 1 and 2, the triangle inequality is always satisfied. Moreover at every point each distance appears exactly once, so the mappings $D_a$ are bijective and $X$ is isosceles-free but it is not $1$-homogeneous.
	That it is not $1$-homogeneous can be checked directly, but it also follows from Corollary~\ref{thm:power_two} as its size is not a power of two.
\end{example}

\begin{figure}[!ht]
		\centering
    \begin{tabular}{c|c|c|c|c|c|c}
      &   1   &   2   &   3   &   4   &   5   &   6   \\\hline
      1 &   0   & $d_1$ & $d_2$ & $d_3$ & $d_4$ & $d_5$ \\
      2 & $d_1$ &   0   & $d_5$ & $d_4$ & $d_2$ & $d_3$ \\
      3 & $d_2$ & $d_5$ &   0   & $d_1$  & $d_3$ & $d_4$ \\
      4 & $d_3$ & $d_4$ & $d_1$ &   0    & $d_5$ & $d_2$ \\
      5 & $d_4$ & $d_2$ & $d_3$ & $d_5$ &   0   & $d_1$ \\
      6 & $d_5$ & $d_3$ & $d_4$ & $d_2$ & $d_1$ &   0    \\      
    \end{tabular}
    \hspace{2cm}
    \raisebox{-1.2cm}{
      \begin{tikzpicture}[every node/.style={draw,shape=circle,fill,minimum size=2pt,inner sep=0pt,outer sep=1pt}]
        \definecolor{c-d1}{RGB}{255,0,0}
        \definecolor{c-d2}{RGB}{0,255,0}
        \definecolor{c-d3}{RGB}{255,165,0}
        \definecolor{c-d4}{RGB}{0,0,255}
        \definecolor{c-d5}{RGB}{200,200,0}                
        \node (v1) at (0,1.1) {};
        \node (v2) at (-1,0.5) {};
        \node (v3) at (-1,-0.5) {};
        \node (v4) at (0,-1.1) {};
        \node (v5) at (1,-0.5) {};
        \node (v6) at (1,0.5) {};
        \draw[c-d1, thick] (v1)--(v2);
        \draw[c-d1, thick] (v3)--(v4);
        \draw[c-d1, thick] (v5)--(v6);
        \draw[c-d2, thick] (v2)--(v3);
        \draw[c-d2, thick] (v4)--(v5);
        \draw[c-d2, thick] (v1)--(v6);
        \draw[c-d3, thick] (v1)--(v3);
        \draw[c-d3, thick] (v2)--(v5);
        \draw[c-d3, thick] (v4)--(v6);
        \draw[c-d4, thick] (v1)--(v4);
        \draw[c-d4, thick] (v2)--(v6);
        \draw[c-d4, thick] (v3)--(v5);
        \draw[c-d5, thick] (v1)--(v5);
        \draw[c-d5, thick] (v2)--(v4);
        \draw[c-d5, thick] (v3)--(v6);

        \draw[color=c-d1,thick] (2,1)--(2.2,1);
        \node[draw=none, fill=none] at (2.5,1) {$d_1$};
        \draw[color=c-d2,thick] (2,0.5)--(2.2,0.5);
        \node[draw=none, fill=none] at (2.5,0.5) {$d_2$};
        \draw[color=c-d3,thick] (2,0)--(2.2,0);
        \node[draw=none, fill=none] at (2.5,0) {$d_3$};
        \draw[color=c-d4,thick] (2,-0.5)--(2.2,-0.5);
        \node[draw=none, fill=none] at (2.5,-0.5) {$d_4$};
        \draw[color=c-d5,thick] (2,-1)--(2.2,-1);
        \node[draw=none, fill=none] at (2.5,-1) {$d_5$};
        
      \end{tikzpicture}}
		\caption{An isosceles-free space that is not $1$-homogeneous.}
		\label{fig:hexagon}
	\end{figure}
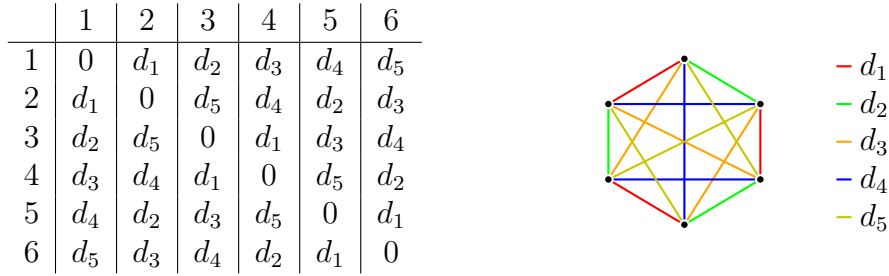

\begin{example}
	Let $X$ be a Polish (i.e. separable, complete) metric space in which all spheres and all bisectors are nowhere dense. A \emph{sphere} in $X$ is any set of the form
	\[S_r(a) := \{x \in X \colon d(a,x) = r\}\]
	while the \emph{bisector} of $a,b \in X$ is
	\[\bis(a,b) := \{x \in X \colon d(a,x) = d(x,b)\}.\]
	Assuming all spheres and all bisectors are nowhere dense, we can easily construct a sequence $A = \{a_n\}_{n \in \omega}$ such that all the distances between pairs of points of $A$ are pairwise distinct (such spaces are called \emph{strongly rigid}~\cite{Janos}).
	This way we obtain a dense countable isoceles-free subspace of $X$, where $X$ could be, for example, $\RR^n$, a manifold with the geodesic distance, a Banach space, or the Urysohn space.
\end{example}

The next result exhibits a universality property of the automorphism groups of homogeneous iso\-sce\-les-free spaces. Let us note that a countable ultrahomogeneous structure $U$, the Fraïssé limit of a given class of finite/finitely generated structures $\mathcal F$, is universal in the sense that it contains isomorphic copies of all countable structures that are unions of chains from $\mathcal F$. So, it is natural to ask whether $\Aut(U)$ contains isomorphic copies of $\Aut(X)$ for every $X \in \mathcal F$ or, even better, for every $X$ that is the union of a countable chain in $\mathcal F$. This universality question had been explicitly asked by Jaligot~\cite{Jaligot} and it turns out that for most classical Fraïssé classes the answer is positive~\cite{KubisMasulovic}, however there exist relational homogeneous structures whose automorphism groups are far from being universal, see~\cite{KubisShelah}. The next result gives a positive answer to the question above in the case of iso\-sce\-les-free metric spaces.

\begin{definition}
	Let $e\maps X \to Y$ be an isometric embedding of metric spaces.
	By an \emph{extension operator} along $e$ we mean a group homomorphism $e_*\maps \Aut(X) \to \Aut(Y)$ (which is necessarily injective) such that $e_*(f) \circ e = e \circ f$ for every $f \in \Aut(X)$. In the case that $e$ is the inclusion $X \subseteq Y$, this means simply that $e_*(f)$ extends $f$ for every $f \in \Aut(X)$.
\end{definition}

\begin{proposition} \label{extension_operator}
	Let $e\maps X \to Y$ be an isometric embedding of an isosceles-free space into a homogeneous isosceles-free space.
	\begin{enumerate}
		\item \label{itm:operator}
			There is a unique extension operator $e_*\maps \Aut(X) \to \Aut(Y)$.
		\item \label{itm:point_extension}
			For any $a \in X$ we have $e_* = E_{e(a)}\inv \circ e \circ E_a$.
		\item \label{itm:functor}
			The assignment $X \mapsto \Aut(X)$ and $e \mapsto e_*$ defines a functor from the category of homogeneous isosceles-free metric spaces and isometric embeddings to the category of Boolean groups and injective group homomorphisms.
	\end{enumerate}
	
	\begin{proof}
		If $e_*$ is an extension operator and $a \in X$, then for every $f \in \Aut(X)$ we have $e_*(f)(e(a)) = e(f(a))$, and by unique $1$-homogeneity of $Y$, $e_*(f)$ is the unique automorphism of $Y$ mapping $e(a)$ to $e(f(a))$, so $E_{e(a)}(e_*(f)) = e(E_a(f))$.
		This shows \ref{itm:point_extension} and uniqueness in \ref{itm:operator} if $X \neq \emptyset$.
		For $X = \emptyset$, we have $\Aut(X) = \set{\id_X}$ and \ref{itm:operator} holds.
		
		To show existence of the extension operator for $X \neq \emptyset$, we fix any $a \in X$ and for $f \in \Aut(X)$ we let $e_*(f)$ be the unique automorphism of $Y$ mapping $e(a) \mapsto e(f(a))$.
		Both $e_*(f) \circ e$ and $e \circ f$ map $a \mapsto e(f(a))$, and so they are equal by Proposition~\ref{unique-isometric-embedding}.
		For $f, g \in \Aut(X)$ we have $(e_*(f) \circ e_*(g)) \circ e = e_*(f) \circ e \circ g = e \circ (f \circ g)$, and hence $e_*$ is a group homomorphism $\Aut(X) \to \Aut(Y)$.
		Moreover the assignment is injective: if $e_*(f) = e_*(g)$, then $e \circ f = e \circ g$ and $f = g$ since $e$ is an embedding.
		
		To show \ref{itm:functor}, consider two isometric embeddings between homogeneous isosceles-free spaces $i\maps X \to Y$ and $j\maps Y \to Z$.
		For every $f \in \Aut(X)$ we have 
		\[
			(j_* \circ i_*)(f) \circ (j \circ i)
			= j_*(i_*(f)) \circ j \circ i
			= j \circ i_*(f) \circ i
			= j \circ (i \circ f),
		\]
		and clearly $j_* \circ i_*$ is a group homomorphism.
		Hence, $j_* \circ i_* = (j \circ i)_*$.
		Clearly also $(\id_X)_* = \id_{\Aut(X)}$.
	\end{proof}
\end{proposition}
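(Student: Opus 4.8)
The plan is to prove parts \ref{itm:operator} and \ref{itm:point_extension} together: first I extract the formula of \ref{itm:point_extension} as a consequence of the rigidity of $Y$ (which simultaneously gives uniqueness in \ref{itm:operator}), then I use that formula as a \emph{definition} and verify it works, and finally deduce \ref{itm:functor} purely formally from the uniqueness already established.

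\textbf{Uniqueness and the formula.} Assume $X \neq \emptyset$ (if $X = \emptyset$ then $\Aut(X)$ is trivial and there is nothing to do) and fix $a \in X$. If $e_*$ is any extension operator, then evaluating $e_*(f) \circ e = e \circ f$ at $a$ gives $e_*(f)(e(a)) = e(f(a))$. Since $Y$ is homogeneous isosceles-free it is uniquely $1$-homogeneous by Corollary~\ref{unique-automorphism}, so $e_*(f)$ is the \emph{only} automorphism of $Y$ sending $e(a)$ to $e(f(a))$; in the language of Observation~\ref{thm:evaluation} this says $e_*(f) = E_{e(a)}\inv(e(f(a)))$, i.e. $e_* = E_{e(a)}\inv \circ e \circ E_a$. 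This is \ref{itm:point_extension}, and it shows an extension operator is unique (and independent of the base point $a$).

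\textbf{Existence.} Conversely, fix $a \in X$ and \emph{define} $e_*(f)$ to be the unique automorphism of $Y$ carrying $e(a)$ to $e(f(a))$ — it exists by $1$-homogeneity of $Y$ and is unique by unique $1$-homogeneity. I must check the extension identity \emph{at every point}: the maps $e_*(f) \circ e$ and $e \circ f$ are isometric embeddings $X \to Y$ both sending $a$ to $e(f(a))$, hence equal by Proposition~\ref{unique-isometric-embedding} (using that $Y$ is isosceles-free). Multiplicativity then follows: $(e_*(f) \circ e_*(g))(e(a)) = e_*(f)(e(g(a))) = e(f(g(a)))$, the last step by the extension identity for $e_*(f)$ applied at $g(a)$; as $e_*(f \circ g)$ is the unique automorphism of $Y$ sending $e(a)$ to $e((f\circ g)(a))$, we conclude $e_*(f)\circ e_*(g) = e_*(f\circ g)$. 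Injectivity is immediate: $e_*(f) = e_*(g)$ forces $e \circ f = e_*(f) \circ e = e_*(g) \circ e = e \circ g$, hence $f = g$ since $e$ is an embedding. This gives \ref{itm:operator}.

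\textbf{Functoriality.} For \ref{itm:functor}, note first that the target category is the right one: $\Aut(X)$ is Boolean by Proposition~\ref{iso-free_boolean} and each $e_*$ is an injective homomorphism by \ref{itm:operator}. Since $\id_{\Aut(X)}$ is visibly an extension operator along $\id_X$, uniqueness gives $(\id_X)_* = \id_{\Aut(X)}$. For composable isometric embeddings $i\maps X \to Y$ and $j\maps Y \to Z$ of homogeneous isosceles-free spaces, $j_* \circ i_*$ is a group homomorphism and $(j_* \circ i_*)(f) \circ (j \circ i) = j_*(i_*(f)) \circ j \circ i = j \circ i_*(f) \circ i = j \circ i \circ f$ (applying the extension identities for $j_*$ then $i_*$), so $j_* \circ i_* = (j \circ i)_*$ by uniqueness. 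The only point requiring genuine care is the one resolved via Proposition~\ref{unique-isometric-embedding}: that an operator pinned down solely by its behaviour at a single base point automatically satisfies the \emph{global} extension identity and is multiplicative. Everything else is formal, so I do not anticipate a real obstacle here.
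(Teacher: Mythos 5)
Your proposal is correct and follows essentially the same route as the paper: derive the formula $e_* = E_{e(a)}\inv \circ e \circ E_a$ (hence uniqueness) from unique $1$-homogeneity of $Y$, define $e_*(f)$ as the unique automorphism sending $e(a)$ to $e(f(a))$, verify the global extension identity via Proposition~\ref{unique-isometric-embedding}, and obtain multiplicativity, injectivity and functoriality from uniqueness. No gaps to report.
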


In the following, we will talk about classes of metric spaces (with isometric embeddings as morphisms), and we define the weak amalgamation property, which was formally introduced by Ivanov~\cite{Ivanov} and independently by Kechris and Rosendal~\cite{KechrisRosendal} in connections with generic automorphisms of Fraïssé limits. It was recently explored in the context of Fraïssé limits by Krawczyk and Kubiś~\cite{KrawczykKubis}; a purely category-theoretic framework was developed in~\cite{KubisWFL} and for more information we refer to these two sources.
(WAP) is crucial for the existence of (weak) Fraïssé sequences and thus for the construction of an object $M$ which is \emph{generic} over $\K$, roughly speaking, the most common (or perhaps most complicated) object (a metric space, in our case) that can be built as the union of a chain in $\K$.

In the context of metric spaces, it seems to be rather difficult to find easy-to-describe classes that lack the weak amalgamation property. Note that graphs can be seen as metric spaces, with distance set $\{0,1,2\}$ depending on whether two points are connected by an edge or not. Then, graph embeddings correspond to isometric embeddings and thus some nontrivial examples of hereditary classes without (WAP) are given in \cite{KrawczykKubis} and \cite{MR4458208}.

\begin{definition}
	A class of metric spaces $\K$ has the \emph{weak amalgamation property} (WAP) if for every $A\in\K$ there exists a $\K$-embedding $\phi\maps A\to B$ such that for every two $\K$-embeddings $\psi_X\maps B\to X$, $\psi_Y\maps B\to Y$ there exist $\K$-embeddings $\pi_X\maps X\to Z$, $\pi_Y\maps Y\to Z$ into a common space $Z$ such that both ways of mapping $A$ to $Z$ coincide: $\pi_X\circ\psi_X\circ\phi=\pi_Y\circ\psi_Y\circ\phi$ (cf. Figure \ref{fig:WAP}).
	(Here a $\K$-embedding means an isometric embedding between spaces from $\K$.)
\end{definition}

\begin{figure}[ht!]
	\begin{center}
		\begin{tikzcd}
			&B\arrow[r,"\psi_X"]&X\arrow[rd,"\pi_X"]\\
			A\arrow[ru,"\phi"]\arrow[rd,"\phi"']&&&Z\\
			&B\arrow[r,"\psi_Y"]&Y\arrow[ru,"\pi_Y"']
		\end{tikzcd}\qquad
		\begin{tikzcd}
			&&X\arrow[rd,"\pi_X"]\\
			A\arrow[r,"\phi"]&B\arrow[ru,"\psi_X"]\arrow[rd,"\psi_Y"']&&Z\\
			&&Y\arrow[ru,"\pi_Y"']
		\end{tikzcd}
	\end{center}
	\caption{\label{fig:WAP}(WAP) requires the left diagram to be commutative, the right one need not be.}
\end{figure}
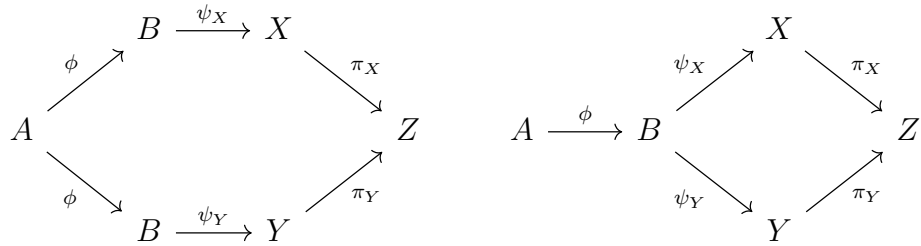

\begin{theorem} \label{WAP_fails}
	The class of all finite isosceles-free spaces does not have the weak amalgamation property.
\end{theorem}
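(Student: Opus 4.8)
The plan is to refute (WAP) already with $A$ the one-point metric space, which certainly lies in the class. Unwinding the negation of the definition, it suffices to show: for every $\K$-embedding $\phi\maps A \to B$ — equivalently, for every finite isosceles-free space $B$ with a distinguished point $b_0 := \phi[A]$ — there are $\K$-embeddings $\psi_X\maps B \to X$ and $\psi_Y\maps B \to Y$ such that no finite isosceles-free $Z$ admits $\K$-embeddings $\pi_X\maps X \to Z$, $\pi_Y\maps Y \to Z$ with $\pi_X \circ \psi_X \circ \phi = \pi_Y \circ \psi_Y \circ \phi$, i.e.\ with $\pi_X$ and $\pi_Y$ merely agreeing at $b_0$ (here one may take $\psi_X,\psi_Y$ to be inclusions). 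The argument rests entirely on the observation of Section~\ref{sec:iso-free} that in an isosceles-free space a ball, hence a sphere $S_r(p)$ with $r > 0$, has at most one boundary point; consequently two points of an isosceles-free space that are equidistant from a common point must coincide. Therefore I want $X$ and $Y$ to extend $B$ by fresh points that are \emph{forced} to be identified in any amalgam fixing $b_0$, but which carry incompatible mutual distances.

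Concretely, I would take $X := B \cup \set{x_1, x_2}$ and $Y := B \cup \set{y_1, y_2}$ and assign \emph{every} new distance (one having an endpoint among the new points) a value in a narrow window $(M, M + \varepsilon)$, where $M$ exceeds the diameter of $B$ and $\varepsilon \in (0, M)$ is smaller than every positive distance occurring in $B$; the values are chosen pairwise distinct within $X$ and within $Y$, subject only to $d_X(x_i, b_0) = d_Y(y_i, b_0)$ for $i = 1, 2$ and $d_X(x_1, x_2) \neq d_Y(y_1, y_2)$. This is possible since only finitely many numbers are being placed in an uncountable interval. Then $X$ and $Y$ are isosceles-free metric spaces: triangles inside $B$ are untouched; a triangle with exactly one new vertex has its two new sides distinct and exceeding $M$, hence distinct from its old side (whose length is below $M$), and it satisfies the triangle inequality since the two new sides differ by less than $\varepsilon$, which is below the length of the old side, while the old side is below $M$ and thus below the sum of the two new sides; a triangle with two new vertices has all three sides distinct and in an interval of length $\varepsilon < M$, which is automatically a valid metric triangle. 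Hence $\psi_X\maps B \hookrightarrow X$ and $\psi_Y\maps B \hookrightarrow Y$ are the desired $\K$-embeddings. Finally, if $\pi_X,\pi_Y$ embed $X,Y$ into an isosceles-free $Z$ and agree at $b_0$, then for $i = 1, 2$ the points $\pi_X(x_i)$ and $\pi_Y(y_i)$ both lie on the sphere of radius $r_i := d_X(x_i, b_0) = d_Y(y_i, b_0) > 0$ about $\pi_X(b_0) = \pi_Y(b_0)$, so $\pi_X(x_i) = \pi_Y(y_i)$; but then $d_X(x_1, x_2) = d_Z(\pi_X x_1, \pi_X x_2) = d_Z(\pi_Y y_1, \pi_Y y_2) = d_Y(y_1, y_2)$, contradicting the choice. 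So no amalgam exists and (WAP) fails.

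The conceptual crux — and really the only idea — is the realisation that sphere-uniqueness lets one manufacture forced identifications inside \emph{any} isosceles-free amalgam, after which one is free to clash the distance between the identified pair. The only step that needs genuine care rather than insight is verifying that $X$ and $Y$ are honest isosceles-free metric spaces over an \emph{arbitrary} base $B$, which is exactly why every new distance is crammed into one short interval lying above the diameter of $B$. I note in passing that the conflicting distances $d(x_1, x_2)$ and $d(y_1, y_2)$ involve no point of $B$ other than $b_0$, so I never need the (true, and provable by the same sphere argument) fact that agreement at $b_0$ automatically forces agreement on all of $B$.
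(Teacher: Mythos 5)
Your proof is correct, and it refutes (WAP) by the same underlying mechanism as the paper — in any isosceles-free amalgam $Z$, two points at the same positive distance from a common point must coincide — but the construction is genuinely different. The paper works over the two-point base $A=\{0,1\}$ and attaches a \emph{single} new point to each side: $d_X(a,x)=d_B(a,0)+r_0$ and $d_Y(a,y)=\min\{d_B(a,0)+r_0,\,d_B(a,1)+r_1\}$, so that in any amalgam identifying $0$ and $1$ the images of $x$ and $y$ are equidistant from $0$ yet distinct (their distances to $1$ differ), producing an isosceles triangle in $Z$; this costs a careful choice of $\varepsilon$ relative to the differences $|d_B(a,0)-d_B(b,1)|$ and a verification that the minimum of the two candidate metrics is again a metric and isosceles-free. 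You instead work over a one-point base and attach \emph{two} new points to each side, with all new distances crammed into a short window above the diameter of $B$; sphere-uniqueness then forces $\pi_X(x_i)=\pi_Y(y_i)$ for $i=1,2$, and the contradiction comes from the clash $d_X(x_1,x_2)\neq d_Y(y_1,y_2)$ — so where the paper prevents an identification to create an isosceles triangle, you force identifications and clash a distance. Your route buys a visibly simpler verification (metric and isosceles-free properties are immediate from the ``narrow high window'' choice, no min-trick, no delicate $\varepsilon$), and it shows the failure already over a one-point root; the paper's version is more economical in the extensions (one new point each) and incidentally shows that even one-point extensions cannot be weakly amalgamated. All the quantifiers are handled correctly (for every $B$ extending the root you exhibit $\psi_X,\psi_Y$ as inclusions and rule out every $Z$), so there is no gap.
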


\begin{proof}
	Let $\K$ denote the class of all finite isosceles-free spaces, let $A:=\{0,1\}$, let $\tuple{A,d_A}$ be our base space in $\K$, and let $\tuple{B,d_B}\in\K$ be any extension of $\tuple{A, d_A}$.
	In order to show a failure of (WAP), we will define two one-point extensions $X:=B\cup\{x\}$ and $Y:=B\cup\{y\}$ of $B$, with distance functions $d_X$ and $d_Y$, respectively, such that no space $Z$ exists in $\K$ which allows $X$ and $Y$ to be embedded into it in such a way that the images of $A$ coincide.
	
	To this end, define $r_0$ as a distance larger than any distance in $B$, and let $r_1$ be a distance very close to $r_0$:
	\begin{align*}
		r_0&:=\max\{d_B(a,b)\colon a,b\in B\}+1,\\
		\varepsilon&:=\frac12 \min\{|d_B(a,0)-d_B(b,1)|>0\colon a,b\in B\},\\
		r_1&:=r_0-\varepsilon.
	\end{align*}
	Note that $B$ is a finite metric space and thus the minimum in the definition of $\varepsilon$ really is a minimum rather than an infimum (in particular, $\varepsilon>0$).
	
	Since they are supposed to be extensions of $B$, let $d_X(a,b):=d_B(a,b)=:d_Y(a,b)$ for any $a,b\in B$. Moreover, for any $a\in B$, let
	\[d_X(a,x):=d_B(a,0)+r_0,\qquad d_Y(a,y):=\min\{d_B(a,0)+r_0,d_B(a,1)+r_1\}\]
	Clearly, this way, $X$ is a valid finite metric space and both choices of distances $\rho(a,y):=d_B(a,0)+r_0$ and $\rho'(a,y):=d_B(a,1)+r_1$ within the minimum in the definition of $d_Y$ would define a valid metric on $Y$ (again with $\rho=\rho'=d_B$ on $B\times B$). So to show that $d_Y$ is valid as well, observe that the minimum of two metrics always satisfies all conditions for a metric except potentially the triangle inequality. However, the two metrics $\rho$ and $\rho'$ only differ when $y$ is one of the two arguments, so we need to check whether
	\begin{align*}
		d_Y(a,y)=\min\{\rho(a,y),\rho'(a,y)\}&\leqslant\min\{\rho(a,b)+\rho(b,y),\rho'(a,b)+\rho'(b,y)\}\\
		&= d_B(a,b)+\min\{\rho(b,y),\rho'(b,y)\}.
	\end{align*}
	But since $\rho$ and $\rho'$ are valid metrics on $Y$, the only cases in which this might fail are when (w.l.o.g.) $\rho(a,y)<\rho'(a,y)$ and $\rho(b,y)>\rho'(b,y)$. But in that case,
	\begin{align*}
		d_Y(a,y)<\rho'(a,y)\leqslant\rho'(a,b)+\rho'(b,y)=d_Y(a,b)+d_Y(b,y).
	\end{align*}
	
	Lastly, when $y$ only shows up on the right-hand side of the triangle inequality, we have to show that:
	\begin{align*}
		d_Y(a,b)\leqslant\min\{\rho(a,y),\rho'(a,y)\}+\min\{\rho(y,b),\rho'(y,b)\}=d_Y(a,y)+d_Y(y,b).
	\end{align*}
	If $\rho(a,y)<\rho'(a,y)$ or $\rho(b,y)<\rho'(b,y)$ this is clear since $\rho(\,\cdot\,,y)\geqslant r_0>d_B(a,b)=d_Y(a,b)$ on $B$. If, on the other hand, $\rho(a,y)>\rho'(a,y)$ and $\rho(b,y)>\rho'(b,y)$, then
	\[d_Y(a,y)+d_Y(b,y)=\rho'(a,y)+\rho'(b,y)\geqslant\rho'(a,b)=d_Y(a,b).\]
	
	So $(X,d_X),(Y,d_Y)$ are both valid metric spaces. Let us additionally show that they are isosceles-free: since $B$ already contains no isosceles triangles, the only way to add an isosceles triangle to $X$ would be if $d_X(a,x)=d_X(b,x)$ for some $a,b\in B$, but in that case, it would follow that $d_B(a,0)=d_B(b,0)$, a clear contradiction.
	
	For $Y$, the situation is slightly more complicated. If $a,b\in B$ and $d_Y(a,y)=d_Y(b,y)$, then we need to consider four cases, however, if both $d_Y(a,y)=d_B(a,0)+r_0$ and $d_Y(b,y)=d_B(b,0)+r_0$ or $d_B(a,1)+r_1=d_Y(b,y)=d_B(b,1)+r_1$, then the same argument as in the previous paragraph leads to the conclusion that $B$ is not isosceles-free. Thus, up to re-labelling $a$ and $b$, we only need to consider the case where
	\[d_Y(a,y)=d_B(a,0)+r_0=d_Y(b,y)=d_B(b,1)+r_1.\]
	
	However, in this case,
	\begin{align*}
		d_B(a,0)+r_0=d_B(b,1)+r_0-\varepsilon\Leftrightarrow \varepsilon=d_B(b,1)-d_B(a,0).
	\end{align*}
	Clearly, this cannot be the case due to our definition of $\varepsilon$ since the right-hand side is always either $0$, negative or at least double the value of $\varepsilon$.
	
	It follows that $X$ and $Y$ are both in $\K$. However, in order for (WAP) to hold, we would need to find a $Z\in\K$ such that both $X$ and $Y$ embed into $Z$ in such a way that $0$ and $1$ in $A$ are mapped to the same points in $Z$ no matter whether they are mapped via $X$ or via $Y$.
	
	So any such space $Z$ would need to satisfy that there exist $\K$-embeddings $\pi_X,\pi_Y$ such that $\pi_X(0)=\pi_Y(0)=:0$ and $\pi_X(1)=\pi_Y(1)=:1$.
	
	However, in such a case, due to isometry of $\K$-embeddings, we get
	\[d_Z(0,\pi_X(x))=d_X(0,x)=r_0=\min\{r_0,d_B(0,1)+r_1\}=d_Y(0,y)=d_Z(0,\pi_Y(y)).\]
	
	This contradicts our assumption that $Z\in\K$ since for that, $Z$ would have to be iso\-sce\-les-free and yet $\pi_X(x)\neq\pi_Y(y)$ (their distances to $1$ are different, for example) and $\{0,\pi_X(x),\pi_Y(y)\}$ is a non-trivial isosceles triangle in $Z$.
\end{proof}

\begin{remark}
  Note that the result above is valid when the distance set is restricted to a dense subgroup $A$ of $\RR$, which includes the case of rational distances.
  Let $\mathcal F_A$ denote the class of all countable isosceles-free spaces $X$ with $\Dist(X) \subseteq A$.
  
  One of the properties of a Fraïssé limit is that it is universal for the associated class of countable structures. When (WAP) fails, not only is there no Fraïssé limit, but there is not even a universal structure; in fact by~\cite[Corollary~6.3]{KrawczykKubis} the universality number of $\mathcal F_A$, that is the minimal cardinality of a subfamily $\mathcal C \subseteq \mathcal F_A$ such that every  $X \in \mathcal F_A$ embeds isometrically into a member of $\mathcal C$, is the continuum.
\end{remark}

\begin{remark} \label{rm:amalgamation}
	Note that by classical Fraïssé theory~\cite[Theorem~7.1.7]{Hodges}, for every countable homogeneous isosceles-free metric space $X$, the family of all finite spaces embeddable into $X$ (denoted by $\Age(X)$) has even the \emph{amalgamation property} (AP).
	This is no contradiction with the previous result – $\Age(X)$ is a much more restrictive class of finite isosceles-free spaces than $\mathcal F_A$ from the previous remark.
	In fact, in a homogeneous isosceles-free space $X$ for every positive $p \neq q \in \Dist(X)$ there is $r \in \Dist(X)$ such that every triangle in $X$ with distances $p$ and $q$ is completed by the distance $r$.
	Hence, every one-point extension $F \cup \set{x} \subseteq X$ is uniquely determined by $d_F$ and a single distance $d(a, x)$ for a fixed point $a \in F$ since every $d(b, x)$ for $b \neq a \in F$ is the unique distance completing the distances $d(a, x)$ and $d(a, b)$.
	This also shows that the class of finite homogeneous isosceles-free spaces does not have the \emph{joint embedding property} (JEP), while it is easy to see that the class of finite isosceles-free spaces has (JEP).
	We will give a precise description of $\Age(X)$ for a homogeneous isosceles-free space $X$ in Proposition~\ref{thm:isosceles-free_amalgamation_class} and Corollary~\ref{thm:isosceles-free_amalgamation_class_corollary}.
\end{remark}

\section{Boolean metric spaces} \label{sec:boolean}

\begin{definition}
  By a \emph{Boolean metric space} we mean a nonempty $1$-homogeneous metric space $X$ such that $\Aut(X)$ is a Boolean group.
\end{definition}

\begin{remark}
	Note that the notion of Boolean metric space we use here is not related to the notion where the metric itself takes values in a Boolean algebra, as used by, for example, Melter in \cite{MelterRobertA.1964Bvra} or Avilés in \cite{AvilesAntonio2004BMSa}.
\end{remark}

We have shown that every homogeneous isosceles-free space is Boolean (Proposition~\ref{iso-free_boolean}) and uniquely $1$-homogeneous (Corollary~\ref{unique-automorphism}).
It turns out that every Boolean metric space is uniquely $1$-homogeneous and that it is in fact enough to suppose that the automorphism group is Abelian (Corollary~\ref{cor:Boolean}).
Moreover, Boolean metric spaces can be viewed as normed $\ZZ_2$-linear spaces, which gives us a concrete representation of every homogeneous isosceles-free space.

\medskip

By a \emph{norm} on an Abelian group $X$ we mean a map $\norm{\cdot}\maps X \to [0, \infty)$ such that
\begin{enumerate}
	\item $\norm{x} = 0$ if and only if $x = 0$, for $x \in X$,
	\item $\norm{x + y} \leq \norm{x} + \norm{y}$, for $x, y \in X$,
	\item $\norm{-x} = \norm{x}$, for $x \in X$.
\end{enumerate}
It is a more general version of an F-norm in linear vector spaces, cf. Rolewicz~\cite[p.~4]{Rolewicz}, and is nowadays present in several aspects of group theory.
It is well-known that every norm induces an \emph{invariant metric} on $X$ (i.e. a metric such that $d(x + z, y + z) = d(x, y)$ for every $x, y, z \in X$) by putting $d(x, y) := \norm{x - y}$.
Then we have $\norm{x} = d(x, 0)$.
On the other hand, the previous formula gives a norm for any invariant metric on $X$.
Altogether, norms and invariant metrics on $X$ are in one-to-one correspondence.
Similarly, norm-preserving maps $X \to Y$ between normed Abelian groups are in one-to-one correspondence with isometric embeddings preserving $0$.

If the Abelian group $X$ is Boolean, $X$ is a linear space over $\ZZ_2$ and the norm is trivially a $\ZZ_2$-norm, i.e. it also satisfies $\norm{\alpha \cdot x} = \abs{\alpha} \cdot \norm{x}$ for every $\alpha \in \ZZ_2$ and $x \in X$.
Altogether, a Boolean group endowed with an invariant metric is the same thing as a normed $\ZZ_2$-linear space, and isometric embeddings preserving $0$ are linear.

\begin{proposition} \label{thm:Boolean}
	Let $X$ be a $1$-homogeneous space such that $\Aut(X)$ is Abelian.
	\begin{enumerate}
		\item For every $f \in \Aut(X)$, the displacement $d(x, f(x))$ does not depend on the point $x \in X$.
		\item \label{itm:norm}
			Putting $\norm{f} := d(x, f(x))$ for any $x \in X$ defines a norm on $\tuple{\Aut(X), \circ}$.
		\item \label{itm:unique_1_homog}
			$X$ is uniquely $1$-homogeneous.
		\item \label{itm:Aut_isometric}
			The evaluation map $E_a\maps \Aut(X) \to X$ is an isometry for every $a \in X$.
		\item $\Aut(X)$ is a Boolean group.
	\end{enumerate}
	
	\begin{proof} \hfill
		\begin{enumerate}
			\item
				Let $f \in \Aut(X)$ and $x, y \in X$.
				By $1$-homogeneity there is a $g \in \Aut(X)$ with $g(x) = y$.
				We have $d(y, f(y)) = d(g(x), f(g(x))) = d(g(x), g(f(x))) = d(x, f(x))$.
			\item
				We have $\norm{f} = 0$ if and only if $d(x, f(x)) = 0$ for every $x \in X$, i.e. if and only if $f = \id_X$.
				For every $f, g \in \Aut(X)$ and $x \in X$ we have $\norm{f \circ g} = d(x, f(g(x))) \leq d(x, f(x)) + d(f(x), f(g(x))) = d(x, f(x)) + d(x, g(x)) = \norm{f} + \norm{g}$.
				Finally, $\norm{f\inv} = d(x, f\inv(x)) = d(f(x), x) = \norm{f}$.
			\item
				If $f(x) = g(x)$ for $f, g \in \Aut(X)$ and some $x \in X$, then $\norm{f\inv \circ g} = d(x, f\inv(g(x))) = d(f(x), g(x)) = 0$, and so $f\inv \circ g = \id_X$ by the first property of the norm.
			\item
				We have $d(E_a(f), E_a(g)) = d(f(a), g(a)) = d(a, f\inv(g(a))) = \norm{f\inv \circ g} = d_{\Aut(X)}(f, g)$.
				Hence, $E_a$ is an isometric embedding.
				It is onto since $X$ is $1$-homogeneous (see Observation~\ref{thm:evaluation})
			\item
				By \ref{itm:Aut_isometric}, $\Aut(X)$ is isometric to $X$, and so is uniquely $1$-homogeneous by \ref{itm:unique_1_homog}.
				By the third property of the norm, the map $\phi\maps \Aut(X) \to \Aut(X)$, $f \mapsto f\inv$, is norm-preserving, and hence an isometry fixing $\id_X$.
				Therefore $\phi(\id_X) = \id_{\Aut(X)}(\id_X)$, and so $\phi = \id_{\Aut(X)}$ since $\Aut(X)$ is uniquely $1$-homogeneous.
			\qedhere
		\end{enumerate}
	\end{proof}
\end{proposition}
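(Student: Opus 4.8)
The plan is to prove the five items in the stated order, as each one feeds the next. For item~(1) the key move is to exploit commutativity of $\Aut(X)$: given $f \in \Aut(X)$ and points $x, y$, use $1$-homogeneity to pick $g \in \Aut(X)$ with $g(x) = y$; then, since $f$ and $g$ commute and $g$ is an isometry,
\[
	d(y, f(y)) = d(g(x), f(g(x))) = d(g(x), g(f(x))) = d(x, f(x)),
\]
so the displacement is independent of the base point and $\norm{f} := d(x, f(x))$ is well defined.

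For item~(2) I would simply verify the three norm axioms on the multiplicative group $\tuple{\Aut(X), \circ}$. Nondegeneracy is immediate: $\norm{f} = 0$ forces $d(x, f(x)) = 0$ at the chosen $x$, hence at all $x$ by item~(1), i.e.\ $f = \id_X$. For subadditivity, evaluate at a point $x$, insert $f(x)$, and use that $f$ is an isometry:
\[
	\norm{f \circ g} = d(x, f(g(x))) \le d(x, f(x)) + d(f(x), f(g(x))) = \norm{f} + d(x, g(x)) = \norm{f} + \norm{g}.
\]
Symmetry is $\norm{f\inv} = d(x, f\inv(x)) = d(f(x), x) = \norm{f}$. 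This norm carries the invariant metric $d_{\Aut(X)}(f, g) = \norm{f\inv \circ g}$ on $\Aut(X)$. Items~(3) and~(4) then fall out by short computations: if $f(x) = g(x)$ then $\norm{f\inv \circ g} = d(x, f\inv(g(x))) = d(f(x), g(x)) = 0$, so $f\inv \circ g = \id_X$, which together with $1$-homogeneity gives unique $1$-homogeneity; and $d(E_a(f), E_a(g)) = d(f(a), g(a)) = d(a, f\inv(g(a))) = \norm{f\inv \circ g} = d_{\Aut(X)}(f, g)$ shows $E_a$ is an isometric embedding, onto because $X$ is $1$-homogeneous (Observation~\ref{thm:evaluation}).

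Finally, item~(5): by item~(4) the normed group $\Aut(X)$ is isometric to $X$, hence it is itself uniquely $1$-homogeneous by item~(3) applied to this isometric copy. The inversion map $\phi\maps \Aut(X) \to \Aut(X)$, $f \mapsto f\inv$, is a group automorphism (here Abelianness is used) and is norm-preserving by the symmetry axiom, so it is an isometry of $d_{\Aut(X)}$ fixing $\id_X$; by unique $1$-homogeneity of $\Aut(X)$ the only such isometry is the identity, so $\phi = \id$, i.e.\ $f\inv = f$ for every $f$, which is exactly the statement that $\Aut(X)$ is Boolean. The only step requiring genuine care is this last bootstrapping argument — transporting homogeneity along $E_a$ to the group side and checking that Abelianness is what makes inversion a metric isometry — but it is short; the remaining obstacles are entirely routine.
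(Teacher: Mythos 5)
Your proposal is correct and follows essentially the same route as the paper: base-point independence via commutativity, the induced norm and invariant metric on $\Aut(X)$, the evaluation map $E_a$ as an isometry, and the final bootstrapping argument showing inversion is a norm-preserving isometry fixing $\id_X$, hence the identity. Your explicit remark that Abelianness is what makes inversion an isometry of the invariant metric is a point the paper leaves implicit, but the argument is the same.
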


\begin{corollary} \label{cor:Boolean}
	For a $1$-homogeneous metric space $X$, $\Aut(X)$ is Abelian if and only if $\Aut(X)$ is Boolean, and in this case, $X$ is uniquely $1$-homogeneous.
\end{corollary}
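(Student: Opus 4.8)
The plan is to observe that this is almost entirely a repackaging of Proposition~\ref{thm:Boolean} together with the elementary fact, recalled just before Proposition~\ref{iso-free_boolean}, that every Boolean group is Abelian. So I would not prove anything new; I would only assemble the two implications and the final clause from results already at hand.

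First I would dispose of the direction ``$\Aut(X)$ Boolean $\Rightarrow$ $\Aut(X)$ Abelian''. For this nothing about $X$ is needed: for $g, h \in \Aut(X)$ we have $ghg\inv h\inv = (gh)(gh) = \id_X$, hence $gh = hg$. Conversely, assume $\Aut(X)$ is Abelian. Since $X$ is $1$-homogeneous by hypothesis, the standing assumptions of Proposition~\ref{thm:Boolean} are met, and its final item yields that $\Aut(X)$ is a Boolean group. This gives the stated equivalence. Finally, in the case where these equivalent conditions hold, $\Aut(X)$ is in particular Abelian, so Proposition~\ref{thm:Boolean}\ref{itm:unique_1_homog} applies and shows that $X$ is uniquely $1$-homogeneous.

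I expect no real obstacle: all of the work lives in Proposition~\ref{thm:Boolean} (notably in its items establishing the norm on $\Aut(X)$, unique $1$-homogeneity, and that $f \mapsto f\inv$ is the identity), and here one merely has to check that its hypotheses are available and quote the right items. If anything deserves a sentence of care, it is making explicit that ``$\Aut(X)$ Abelian'' plus ``$X$ $1$-homogeneous'' is exactly the hypothesis of Proposition~\ref{thm:Boolean}, so that its conclusions may be invoked without further argument.
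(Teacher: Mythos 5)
Your proposal is correct and matches the paper's (implicit) argument exactly: the corollary is obtained by combining the elementary fact that every Boolean group is Abelian with Proposition~\ref{thm:Boolean}, whose items \ref{itm:unique_1_homog} and (5) give unique $1$-homogeneity and Booleanness of $\Aut(X)$ under the Abelian hypothesis. Nothing further is needed.
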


\begin{corollary}
	Let $X$ be a Boolean metric space.
	$\Aut(X)$ is a normed $\ZZ_2$-linear space, and the canonical action of $\Aut(X)$ on $X$ turns $X$ into an affine space over $\Aut(X)$.
	Moreover, every evaluation map $E_a\maps \Aut(X) \to X$ is an affine isometry.
	
	\begin{proof}
		$\Aut(X)$ is a Boolean group, and hence a $\ZZ_2$-linear space.
		By Proposition~\ref{thm:Boolean}~\ref{itm:norm} it is endowed with a norm, which is trivially a $\ZZ_2$-norm.
		By Proposition~\ref{thm:Boolean}~\ref{itm:unique_1_homog} the action of $\Aut(X)$ on $X$ is transitive and faithful, and so $X$ is an affine space over $\Aut(X)$.
		By Proposition~\ref{thm:Boolean}~\ref{itm:Aut_isometric} the map $E_a$ is an isometry.
		Moreover, it is affine since its linear part is just $\id_{\Aut(X)}$.
	\end{proof}
\end{corollary}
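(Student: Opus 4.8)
The plan is to obtain every assertion directly from Proposition~\ref{thm:Boolean}, which applies because a Boolean metric space is a nonempty $1$-homogeneous space whose automorphism group is Boolean, hence in particular Abelian. First I would observe that a Boolean group carries a unique $\ZZ_2$-scalar action — $0 \cdot f = \id_X$ and $1 \cdot f = f$ — which is automatically linear, so $\tuple{\Aut(X), \circ}$ is a $\ZZ_2$-linear space with no extra choices involved. By Proposition~\ref{thm:Boolean}~\ref{itm:norm} the assignment $\norm{f} := d(x, f(x))$ (independent of $x$) is a norm on this group, and the scalar-homogeneity axiom $\norm{\alpha f} = \abs{\alpha}\,\norm{f}$ for $\alpha \in \ZZ_2$ holds trivially: it reduces to $\norm{\id_X} = 0$ when $\alpha = 0$ and is an identity when $\alpha = 1$. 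Hence $\Aut(X)$ is a normed $\ZZ_2$-linear space.

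Next I would treat the affine structure. By Proposition~\ref{thm:Boolean}~\ref{itm:unique_1_homog} the space $X$ is uniquely $1$-homogeneous, which unwinds precisely to the statement that the canonical action of $\Aut(X)$ on $X$, given by $f \cdot x := f(x)$, is transitive (this is $1$-homogeneity) and free (the only automorphism fixing a prescribed point is $\id_X$, again by uniqueness). A simply transitive action of an Abelian group on a set turns that set into an affine space over the group, the difference $y - x$ of points $x, y \in X$ being the unique $f \in \Aut(X)$ with $f(x) = y$. So $X$ is an affine space over the $\ZZ_2$-linear space $\Aut(X)$.

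Finally I would handle the evaluation maps $E_a \maps \Aut(X) \to X$, $f \mapsto f(a)$. That $E_a$ is an isometry is exactly Proposition~\ref{thm:Boolean}~\ref{itm:Aut_isometric}, once $\Aut(X)$ is equipped with the invariant metric $d_{\Aut(X)}(f, g) = \norm{f\inv \circ g}$ induced by its norm. For affineness, regard $\Aut(X)$ as an affine space over itself via translation, so that a direction vector $v \in \Aut(X)$ moves $f$ to $v \circ f$. Then $E_a(v \circ f) = (v \circ f)(a) = v(f(a)) = v \cdot E_a(f)$, i.e. $E_a$ intertwines translation on the source with the canonical action on the target through the identity map of $\Aut(X)$. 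Thus $E_a$ is affine with linear part $\id_{\Aut(X)}$, which is a linear isometry, so $E_a$ is an affine isometry.

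I do not expect a genuine obstacle: the statement is a repackaging of Proposition~\ref{thm:Boolean} in the language of normed and affine $\ZZ_2$-linear spaces. The one step that is not purely formal is identifying the linear part of $E_a$ with $\id_{\Aut(X)}$; this rests only on the associativity computation $E_a(v \circ f) = v(f(a))$ plus a careful choice of conventions for ``affine space over $\Aut(X)$'' and ``affine isometry'', and making those conventions explicit is really the only thing that needs spelling out.
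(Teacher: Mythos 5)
Your proposal is correct and follows essentially the same route as the paper: read off the $\ZZ_2$-linear structure and norm from Proposition~\ref{thm:Boolean}~\ref{itm:norm}, the simply transitive (transitive and free) action from \ref{itm:unique_1_homog} to get the affine structure, and the isometry claim from \ref{itm:Aut_isometric}, with the linear part of $E_a$ identified as $\id_{\Aut(X)}$. You merely spell out the intertwining computation $E_a(v \circ f) = v(f(a)) = v \cdot E_a(f)$ that the paper leaves implicit, which is fine.
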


Since every Boolean group is a $\ZZ_2$-linear space, and by choosing a basis $I$ we obtain an isomorphism to $\ZZ_2^{(I)}$, i.e. to the subspace of $\ZZ_2^I$ consisting of all functions of finite support.
We can equivalently view $\ZZ_2^{(I)}$ as the family $\powfinset{I}$ of all finite subsets of $I$ with the operation of symmetric difference: $A \symdiff B = (A \setminus B) \cup (B \setminus A)$.
We shall write just $2^{(I)}$ and switch the perspective between $\tuple{\ZZ_2^{(I)}, +}$ and $\tuple{\powfinset{I}, \symdiff}$ as convenient, and similarly for $2^I$.
To turn $2^{(I)}$ into a normed $\ZZ_2$-linear space means to provide a map $\norm{\cdot}\maps 2^{(I)} \to [0, \infty)$ satisfying $\norm{\emptyset} = 0$ and $\norm{x \symdiff y} \leq \norm{x} + \norm{y}$ for $x, y \in 2^{(I)}$.

\begin{definition}
	We say that a metric space $X$ is \emph{$\ZZ_2$-normable} if it is isometric to a normed $\ZZ_2$-linear space, or equivalently to $\tuple{2^{(I)}, \norm{\cdot}}$ for some set $I$ and a norm $\norm{\cdot}$.
	Note that every $\ZZ_2$-normable space is $1$-homogeneous as witnessed by the translations.
\end{definition}

\begin{observation} \label{thm:properties}
	We have shown that every (nonempty) homogeneous isosceles-free space is Boolean, that every Boolean metric space is $\ZZ_2$-normable, and that $\ZZ_2$-normable spaces admit a very concrete description.
	Figure~\ref{fig:properties} summarizes the implications between the properties considered.
	
	Moreover, for a metric space $X$ we have the following.
	\begin{enumerate}
		\item $X$ is Boolean if and only if it is $\ZZ_2$-normable and uniquely $1$-homogeneous.
		\item $X$ is homogeneous isosceles-free if and only if it is Boolean and $2$-homogeneous.
	\end{enumerate}
	
	Also, a discrete metric space ($d(x, y) = 1$ for $x \neq y$) of size $2^n$ for $n \geq 2$ is clearly ultrahomogeneous and $\ZZ_2$-normable, but not uniquely $1$-homogeneous.
	Example~\ref{Dn} gives a uniquely $1$-homogeneous space that is not Boolean (or equivalently not $\ZZ_2$-normable).
	Example~\ref{ex:boolean_rainbow} gives a Boolean metric space that is not isosceles-free.
\end{observation}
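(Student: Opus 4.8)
The first sentence merely recapitulates Propositions~\ref{iso-free_boolean} and~\ref{thm:Boolean} together with the representation of Boolean groups as $2^{(I)}$, and the last two sentences are forward references, so the genuine content is the two equivalences (together with the small remark on discrete spaces). The plan is to derive everything by assembling Proposition~\ref{thm:Boolean}, Corollary~\ref{cor:Boolean}, and Proposition~\ref{unique-two-homog}; no new idea is required, only careful bookkeeping.

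For equivalence~(1): if $X$ is Boolean, then $X$ is $1$-homogeneous and $\Aut(X)$ is Boolean, hence Abelian, so Proposition~\ref{thm:Boolean} applies and gives that $X$ is uniquely $1$-homogeneous, that $\Aut(X)$ carries the norm $\norm{f} := d(x, f(x))$, and that every evaluation map $E_a\maps \Aut(X) \to X$ is a surjective isometry; since a Boolean group with a norm is exactly a normed $\ZZ_2$-linear space, $X$ is $\ZZ_2$-normable. Conversely, suppose $X$ is $\ZZ_2$-normable and uniquely $1$-homogeneous, and identify $X$ with $\tuple{2^{(I)}, \norm{\cdot}}$. For each $v \in 2^{(I)}$ the translation $t_v\maps x \mapsto x \symdiff v$ is an automorphism mapping $\emptyset$ to $v$; by unique $1$-homogeneity $t_v$ is the only automorphism doing so, and since every automorphism of $X$ sends $\emptyset$ somewhere, $\Aut(X) = \set{t_v : v \in 2^{(I)}}$, which is a Boolean group. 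Hence $X$ is Boolean.

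For equivalence~(2): if $X$ is homogeneous isosceles-free, then $\Aut(X)$ is Boolean by Proposition~\ref{iso-free_boolean} and $X$ is $1$-homogeneous by hypothesis, so $X$ is Boolean, while Proposition~\ref{ultra-free} shows $X$ is ultrahomogeneous, in particular $2$-homogeneous. Conversely, if $X$ is Boolean and $2$-homogeneous, then $\Aut(X)$ is Abelian, so Corollary~\ref{cor:Boolean} gives that $X$ is uniquely $1$-homogeneous; together with $2$-homogeneity this means $X$ is uniquely $2$-homogeneous, which by Proposition~\ref{unique-two-homog} is the same as being homogeneous isosceles-free. The discrete-space remark is then immediate: the $2^n$-point discrete space is isometric to $\tuple{2^{(I)}, \norm{\cdot}}$ with $\card{I} = n$ and $\norm{v} = 1$ for $v \neq \emptyset$ (the triangle inequality being trivial), so it is $\ZZ_2$-normable; it is ultrahomogeneous because every bijection of a finite discrete space is an isometry; and for $n \geq 2$ it fails to be uniquely $1$-homogeneous because $\card{\Aut(X)} = (2^n)! > 2^n = \card{X}$, contradicting the bijectivity of $E_a$ from Observation~\ref{thm:evaluation}.

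I expect no serious obstacle; the only point requiring care is the converse in~(1). One is tempted to conclude that $\Aut(X)$ coincides with its translation subgroup by comparing cardinalities, but this comparison is invalid when $X$ is infinite (a proper subgroup can have the same cardinality). The correct argument uses unique $1$-homogeneity to pin down each automorphism individually as a translation, so no counting is involved; everything else reduces to quoting the cited results.
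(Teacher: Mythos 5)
Your proposal is correct and follows essentially the same route as the paper: the converse of (1) is proved by using unique $1$-homogeneity to identify every automorphism with a translation (you conclude Booleanness of $\Aut(X)$ directly from $t_v^2 = \id$, while the paper notes the group is Abelian and cites Corollary~\ref{cor:Boolean} — an immaterial difference), and the converse of (2) is exactly the paper's reduction to unique $2$-homogeneity via Corollary~\ref{cor:Boolean} and Proposition~\ref{unique-two-homog}. The forward directions and the discrete-space remark are handled just as in the paper, so there is nothing to add.
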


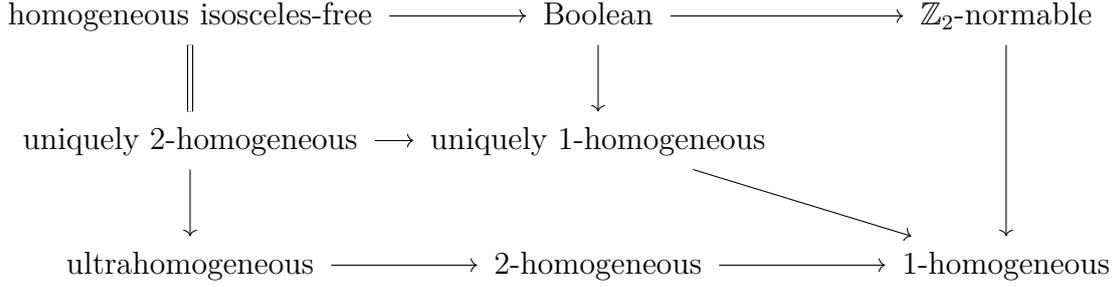
\begin{figure}[ht!]
	\centering
	\begin{tikzpicture}[
			x = {(13em, 0)},
			y = {(0, -8em)},
			text height = 1.5ex,
			text depth = 0.25ex,
			inner sep = 1.2ex,
		]
		\node (iso-free) at (0, 0) {homogeneous isosceles-free};
		\node (bool) at (1, 0) {Boolean};
		\node (norm) at (2, 0) {$\ZZ_2$-normable};
		\node (ultrahom) at (0, 1) {ultrahomogeneous};
		\node (2-hom) at (1, 1) {$2$-homogeneous};
		\node (1-hom) at (2, 1) {$1$-homogeneous};
		\node (u1-hom) at (1, 0.5) {uniquely $1$-homogeneous};
		\node (u2-hom) at (0, 0.5) {uniquely $2$-homogeneous};
		
		\graph{
			(iso-free) -> (bool) -> (norm),
			(ultrahom) -> (2-hom) -> (1-hom),
			(u2-hom) -> {(ultrahom), (u1-hom)},
			(norm) -> (1-hom),
			(bool) -> (u1-hom) -> (1-hom),
			(iso-free) --[double, double distance=1.5pt] (u2-hom),
		};
	\end{tikzpicture}
	
	\caption{Implications between properties of metric spaces considered.}
	\label{fig:properties}
\end{figure}

\begin{proof}
	Suppose that $X$ is $\ZZ_2$-normable.
	Then for every $x, y \in X$ there is a unique translation $T \in \Aut(X)$ such that $T(x) = y$.
	If $X$ is also uniquely $1$-homogeneous, then all auto-isometries are translations, and so $\Aut(X)$ is Abelian, and we may use Corollary~\ref{cor:Boolean}.
	
	Now suppose that $X$ is Boolean and $2$-homogeneous.
	Then $X$ is uniquely $2$-ho\-mo\-ge\-ne\-ous, and so isosceles-free by Proposition~\ref{unique-two-homog}.
\end{proof}

We observe that it is easy to identify isosceles-free spaces among $\ZZ_2$-normable spaces.
\begin{observation}
	A normed linear space $X$ (a priori over any valued field) is isosceles-free if and only if the norm $\norm{\cdot}\maps X \to [0, \infty)$ is injective, and in that case the field is necessarily $\ZZ_2$ since $\set{x, 0, -x}$ forms an isosceles triangle unless $x = -x$.
\end{observation}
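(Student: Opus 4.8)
The plan is to reduce the isosceles-free condition to a statement about the norm using translation-invariance of the metric, and then to analyse the scalar field.

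For the equivalence, I would invoke Observation~\ref{thm:evaluation}: $X$ is isosceles-free precisely when every distance map $D_a\maps x\mapsto d(x,a)$ is injective. Since $d(x,y)=\norm{x-y}$ is translation-invariant, each $D_a$ is $D_0=\norm{\cdot}$ precomposed with the bijection $x\mapsto x-a$, so the whole family $(D_a)_a$ consists of injections if and only if the single map $\norm{\cdot}$ is injective. (Directly, and just as quickly: if $\norm u=\norm v$ with $u\neq v$, then neither $u$ nor $v$ is $0$ --- since $\norm w=0$ forces $w=0$ --- so $\set{0,u,v}$ is an isosceles triangle; conversely, relabelling the vertices of any isosceles triangle so that $d(x,y)=d(x,z)$ gives $\norm{x-y}=\norm{x-z}$ with $x-y\neq x-z$.)

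Next, supposing $X$ is isosceles-free and $X\neq\set0$, I would pin down the field $K$. Fix $0\neq x\in X$. Because $\abs{-1}=1$ in every valued field, $\norm{-x}=\abs{-1}\,\norm x=\norm x$, so injectivity of the norm gives $-x=x$; as $x\neq0$ this means $1+1=0$ in $K$, i.e.\ $\operatorname{char}K=2$, and in particular the absolute value on $K$ is non-archimedean. Now for distinct $\alpha,\beta\in K^\times$ the vectors $\alpha x$ and $\beta x$ are distinct (else $(\alpha-\beta)x=0$ with $\alpha-\beta\neq0$, $x\neq0$), while $\norm{\alpha x}=\abs\alpha\,\norm x$ and $\norm{\beta x}=\abs\beta\,\norm x$; injectivity of the norm and $\norm x>0$ then force $\abs\alpha\neq\abs\beta$. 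Thus $\abs{\cdot}$ is injective on $K^\times$, so $\alpha=1$ is the only element of absolute value $1$. If the valuation were nontrivial, its valuation ring would be a proper subring with nonzero maximal ideal $\mathfrak m$, and any $0\neq m\in\mathfrak m$ would give $\abs{1+m}=\max(1,\abs m)=1$ by the ultrametric inequality (using $\abs m<1$), with $1+m\neq1$ --- a contradiction. Hence the valuation is trivial, i.e.\ $\abs\alpha=1$ for all $\alpha\in K^\times$, and then injectivity of $\abs{\cdot}$ on $K^\times$ forces $K^\times=\set1$, that is $K=\ZZ_2$.

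The only step needing real care is the last one: the hint built into the statement ($\set{x,0,-x}$ being isosceles unless $x=-x$) settles only $\operatorname{char}K\neq2$, and ruling out the remaining characteristic-$2$ fields other than $\ZZ_2$ is exactly the little valuation-theoretic argument above. If one is content to read ``the field is $\ZZ_2$'' as merely ``the additive group of $X$ is a $\ZZ_2$-linear space'' --- which is all the norm detects, and all that matters for the subsequent development --- then that subtlety evaporates and the observation is precisely the displayed one-liner.
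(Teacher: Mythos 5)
Your proposal is correct, and on the core equivalence and the characteristic-two step it follows the same route as the paper: the observation carries no separate proof there, the equivalence being immediate from translation invariance of $d(x,y)=\norm{x-y}$ (exactly your argument), and the field claim being justified only by the triangle $\set{x,0,-x}$, which for $X\neq\set{0}$ yields $x=-x$ for all $x$, i.e.\ characteristic $2$. Where you go beyond the paper is the valuation-theoretic argument ruling out characteristic-two fields larger than $\ZZ_2$: injectivity of the norm on the line $Kx$ forces $\abs{\cdot}$ to be injective on $K^\times$; the absolute value is non-archimedean since the characteristic is positive; a nontrivial valuation would give $0\neq m$ with $\abs{m}<1$ and hence $\abs{1+m}=1=\abs{1}$ with $1+m\neq 1$, a contradiction; so the valuation is trivial and injectivity forces $K^\times=\set{1}$. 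This correctly supplies what the paper's parenthetical hint leaves unsaid --- the hint alone only shows that (restricting scalars) $X$ is a $\ZZ_2$-linear space, which is all the subsequent development actually uses --- and your argument shows that the literal claim ``the field is $\ZZ_2$'' is also true for nonzero $X$ (for $X=\set{0}$ it is vacuous). The only cosmetic point: the ultrametric inequality gives $\abs{1+m}\leq\max(1,\abs{m})$, with equality because $\abs{m}<\abs{1}$; you say as much, so nothing is missing.
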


Altogether, we obtain the following summarizing theorem.
\begin{theorem} \label{thm:homog_iso-free}
	Let $I$ be a set and let $\norm{\cdot}\maps 2^{(I)} \to [0, \infty)$ be an injective map satisfying $\norm{\emptyset} = 0$ and $\norm{x \symdiff y} \leq \norm{x} + \norm{y}$ for $x, y \in 2^{(I)}$.
	By putting $d(x, y) := \norm{x \symdiff y}$ for $x, y \in 2^{(I)}$ we obtain a homogeneous isosceles-free space.
	Moreover, every homogeneous isosceles-free space can be obtained this way up to an isometry.
\end{theorem}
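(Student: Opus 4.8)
The plan is to assemble the structural results already established; essentially no new argument is needed.

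\emph{First assertion.} I would begin by checking that $d(x, y) := \norm{x \symdiff y}$ is indeed a metric on $2^{(I)}$: the condition $\norm{\emptyset} = 0$ together with injectivity gives $d(x, y) = 0 \iff x = y$, symmetry is commutativity of $\symdiff$, and the triangle inequality is
\[
	\norm{x \symdiff z} = \norm{(x \symdiff y) \symdiff (y \symdiff z)} \leq \norm{x \symdiff y} + \norm{y \symdiff z}.
\]
So $\tuple{2^{(I)}, d}$ is a normed $\ZZ_2$-linear space, hence $\ZZ_2$-normable and therefore $1$-homogeneous (via translations, as noted after the definition of $\ZZ_2$-normable). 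Injectivity of $\norm{\cdot}$ makes it isosceles-free by the observation immediately preceding the theorem (a normed linear space is isosceles-free iff its norm is injective). Finally, Proposition~\ref{ultra-free} upgrades a $1$-homogeneous isosceles-free space to an ultrahomogeneous one, so $\tuple{2^{(I)}, d}$ is a homogeneous isosceles-free space.

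\emph{Converse.} Let $X$ be a nonempty homogeneous isosceles-free space (the empty space being excluded by convention, or covered formally by $I = \emptyset$ if one prefers). By Proposition~\ref{iso-free_boolean} the group $\Aut(X)$ is Boolean, and $X$ is $1$-homogeneous, so $X$ is a Boolean metric space. By Observation~\ref{thm:properties}, every Boolean metric space is $\ZZ_2$-normable; concretely, $\Aut(X)$ carries the norm $\norm{f} = d(x, f(x))$, a choice of $\ZZ_2$-linear basis $I$ of $\Aut(X)$ identifies $\tuple{\Aut(X), \cmp}$ with $\tuple{2^{(I)}, \symdiff}$, and any evaluation map $E_a$ is an affine isometry $\Aut(X) \to X$. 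Transporting the norm across these maps exhibits $X$ as isometric to $\tuple{2^{(I)}, \norm{\cdot}}$ for some set $I$ and norm $\norm{\cdot}$. Since $X$, and hence $\tuple{2^{(I)}, \norm{\cdot}}$, is isosceles-free, the same observation used in the first part forces $\norm{\cdot}$ to be injective. Thus $X$ is of the stated form.

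I do not expect a genuine obstacle here; the only mild points requiring care are the bookkeeping in the converse — composing $E_a\inv \maps X \to \Aut(X)$ with the basis isomorphism $\Aut(X) \cong 2^{(I)}$ and carrying the norm across — and remembering to restrict to nonempty spaces (or to note that $2^{(\emptyset)}$ is the one-point space).
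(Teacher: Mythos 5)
Your proposal is correct and follows essentially the same route as the paper, which presents this theorem as a summary assembled from Proposition~\ref{iso-free_boolean}, Proposition~\ref{thm:Boolean} and its corollary, Observation~\ref{thm:properties}, the observation that a normed linear space is isosceles-free iff its norm is injective, and Proposition~\ref{ultra-free}. The only trivial quibble is the parenthetical suggesting $I = \emptyset$ could cover the empty space ($2^{(\emptyset)}$ is the one-point space, as you yourself note at the end), but this matches the paper's own implicit nonemptiness convention and is not a gap.
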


\begin{definition}
	We say that a $\ZZ_2$-normable space $X$ is \emph{additive} if it is isometric to the space $2^{(I)}$ with the norm $\norm{x} = \sum_{i \in x} r_i = \sum_{i \in I} x(i) \cdot r_i$ for some $\set{r_i: i \in I} \subseteq (0, \infty)$.
	We always have $\norm{x \symdiff y} \leq \norm{x} + \norm{y}$ in this case.
	The distance satisfies $d(x, y) = \sum_{i \in I} \abs{x(i) - y(i)} \cdot r_i$, so $X$ embeds into $\ell_1(I)$.
	
	We say that a $\ZZ_2$-normable space $X$ is \emph{monotone} if it is isometric to the space $2^{(I)}$ with a monotone norm, i.e. $\norm{x} \leq \norm{y}$ for every $x \subseteq y$.
	Clearly, every additive $\ZZ_2$-normable space is monotone.
\end{definition}

\begin{remark}
	The triangle inequality of the space $\tuple{2^{(I)}, \norm{\cdot}}$ expressed using the norm is
	\[
		\norm{x \symdiff z} \leq \norm{x \symdiff y} + \norm{y \symdiff z} \quad \text{for every $x, y, z \in 2^{(I)}$},
	\]
	but it reduces to
	\[
		\norm{x' \symdiff y'} \leq \norm{x'} + \norm{y'} \quad \text{for every $x, y \in 2^{(I)}$}
	\]
	in every Boolean group with an invariant metric since we can put $x' = x \symdiff y$ and $y' = y \symdiff z$.
	
	A different simplification of the original inequality is the fact that it is enough to verify it only for triples of pairwise disjoint sets.
	For every $\set{x_k: k < 3} \subseteq X$ we put $w := \set{i \in I: \card{\set{k < 3: i \in x_k}} \geq 2}$.
	Then $w$ is a finite set, the sets $x_k \symdiff w$, $k < 3$, are pairwise disjoint, and $d(x_k, x_{k'}) = d(x_k \symdiff w, x_{k'} \symdiff w)$.
	The triangle inequality then reduces to 
	\[
		\norm{x \cup z} \leq \norm{x \cup y} + \norm{y \cup z} \quad \text{for every pairwise disjoint $x, y, z \in 2^{(I)}$.}
	\]
	In particular, the norm is $\cup$-subadditive: $\norm{x \cup y} \leq \norm{x} + \norm{y}$ for $x, y$ disjoint, but it may not be monotone (see Example~\ref{ex:nonmonotone}).
\end{remark}

\begin{example} \label{ex:power-of-two}
	Let $r_n := 2^n$ for $n \in \omega$.
	The induced additive norm is the bijection $2^{(\omega)} \to \omega$ corresponding to binary expansions of natural numbers.
	Hence we obtain the countable infinite discrete homogeneous isosceles-free space $X_\omega = \tuple{2^{(\omega)}, \norm{\cdot}}$.
	We can also consider the restricted finite spaces $X_n = \tuple{2^n, \norm{\cdot}}$.
	We have $\Dist(X_n) = \set{0, \ldots, 2^n - 1}$.
\end{example}

\begin{example} \label{ex:Cantor2}
	Let $r_n := 2^{-n}$ for $n \in \omega$ and let $\norm{\cdot}$ be the corresponding additive norm on $2^{(\omega)}$, which is a bijection onto the dyadic rational numbers in $[0, 2)$.
	The corresponding homogeneous isosceles-free space is the dense subset $2^{(\omega)}$ of the Cantor space $2^\omega$ with the metric $d(x, y) = \sum_{n \in \omega} \abs{x(n) - y(n)} \cdot 2^{-n}$.
	
	Note that the completion $2^\omega$ of our homogeneous isosceles-free space $2^{(\omega)}$ is uniquely $1$-homogeneous and Boolean, but not $2$-homogeneous and not isosceles-free.
	The $1$-ho\-mo\-gene\-ity follows from the fact that $2^\omega$ is a normed $\ZZ_2$-linear space.
	Hence, $2^\omega$ is Boolean if and only if it is uniquely $1$-homogeneous.
	If $f(x) = g(x)$ for $x \in 2^\omega$ and $f, g \in \Aut(2^\omega)$, then $f(h(0)) = g(h(0))$ for $h \in \Aut(2^\omega)$ with $h(0) = x$, and so $h\inv \circ f\inv \circ g \circ h$ is a an auto-isometry fixing $0$.
	It is enough to show that $\id_{2^\omega}$ is the only auto-isometry $\phi$ fixing $0$.
	This follows from the fact that every $x \in 2^\omega$ that is not eventually constant is the unique element of norm $\norm{x}$, and so $x = \id$ on a dense subset of $2^\omega$.
	
	Let $e_n, e'_n \in 2^\omega$ denote the characteristic function of $\set{n}$ and of its complement, respectively.
	Then $\norm{e_0} = 1 = \norm{e'_0}$, so the completion is not isosceles-free.
	Also, $e_1$ is a mid-point of $0$ and $e'_0$, while there is no mid-point of $0$ and $e_0$, and so the completion is not $2$-homogeneous.
\end{example}

\begin{example} \label{ex:Cantor3}
	Let $r_n := 3^{-n}$ for $n \in \omega$ and let $\norm{\cdot}$ be the corresponding additive norm on $2^{(\omega)}$, which is injective.
	Similarly to the previous example, the corresponding homogeneous isosceles-free space is the dense subset $2^{(\omega)}$ of the Cantor space $2^\omega$ with the metric $d(x, y) = \sum_{n \in \omega} \abs{x(n) - y(n)} \cdot 3^{-n}$.
	However, this time the completion $2^\omega$ is still a homogeneous isosceles-free space.
	This is because the norm $\norm{x} = \sum_{n \in \omega} x(n) \cdot 3^{-n}$ is injective on $2^\omega$:
	if $x(n) = y(n)$ for every $n < n_0$ and $x(n_0) < y(n_0)$, then $\norm{x} \leq \sum_{n > n_0} 3^{-n} = 3/2 \cdot 3^{-(n_0 + 1)} < 3^{-n_0} \leq \norm{y}$.
\end{example}

\begin{remark}
	It is known that the completion of a countable ultrahomogeneous metric sometimes is (as for the rational Urysohn space) and sometimes is not (see \cite[Proposition~10]{NguyenVanThe}) ultrahomogeneous.
	The two very similar examples above demonstrate this phenomenon in the realm of isosceles-free homogeneous spaces.
\end{remark}

In the next proposition we refine our results on extension operators (Proposition~\ref{extension_operator}).
\begin{proposition} \label{thm:linear_maps}
	Let $X$ and $Y$ be homogeneous isosceles-free spaces.
	\begin{enumerate}
		\item For every isometric embedding $e\maps X \to Y$ the extension operator $e_*\maps \Aut(X) \to \Aut(Y)$ is a linear isometric embedding.
		\item Every isometric embedding $\Aut(X) \to \Aut(Y)$ mapping $\id_X$ to $\id_Y$ is an extension operator and hence linear, and every isometric embedding $X \to Y$ is affine.
	\end{enumerate}
	
	\begin{proof}
		For every $a \in X$ and $f \in \Aut(X)$, $e_*(f)$ maps $e(a)$ to $e(f(a))$, and so we have $\norm{e_*(f)} = d(e(a), e(f(a))) = d(a, f(a)) = \norm{f}$.
		As a group homomorphism, $e_*$ is linear.
		Together, a norm-preserving linear map is an isometric embedding.
				
		For every embedding $e\maps X \to Y$ we have that $e = E_{e(a)} \cmp e_* \cmp E_a\inv$ by Proposition~\ref{extension_operator}, and hence is affine as a composition of affine maps.
		On the other hand, every isometric embedding $f\maps \Aut(X) \to \Aut(Y)$ such that $f(\id_X) = \id_Y$ is of the form $e_*$, and so is linear.
		Namely, we take any $a \in X$ and $b \in Y$ and put $e := E_b \cmp f \cmp E_a\inv$.
		Since $f(\id_X) = \id_Y$, we have $e(a) = E_b(f(\id_X)) = E_b(\id_Y) = b$, and so $f = e_*$ by unique $1$-homogeneity.
	\end{proof}
\end{proposition}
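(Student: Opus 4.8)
The plan is to derive both parts from two earlier facts: the existence, uniqueness, and functoriality of extension operators (Proposition~\ref{extension_operator}), and the fact that in a Boolean metric space every evaluation map $E_a\maps \Aut(X) \to X$ is an affine isometry (the corollary after Proposition~\ref{thm:Boolean}).

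First I would handle part~(1). By Proposition~\ref{extension_operator} the operator $e_*$ exists and is a group homomorphism between the Boolean groups $\Aut(X)$ and $\Aut(Y)$; a homomorphism between groups of exponent $2$ is automatically $\ZZ_2$-linear, so linearity is immediate. To see that $e_*$ is an isometric embedding it is enough, since $d(u, v) = \norm{u\inv \cmp v}$ in a $\ZZ_2$-normed group, to check that $e_*$ preserves the norm $\norm{f} = d(x, f(x))$, which is well-defined by Proposition~\ref{thm:Boolean}. Fixing any $a \in X$, the defining relation $e_*(f) \cmp e = e \cmp f$ forces $e_*(f)$ to send $e(a)$ to $e(f(a))$, so $\norm{e_*(f)} = d(e(a), e(f(a))) = d(a, f(a)) = \norm{f}$ because $e$ is an isometric embedding. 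A norm-preserving linear map between normed $\ZZ_2$-linear spaces is an isometric embedding, which finishes part~(1).

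For part~(2), the affineness of an arbitrary isometric embedding $e\maps X \to Y$ drops out of Proposition~\ref{extension_operator}~\ref{itm:point_extension}: for any $a \in X$ we have $e = E_{e(a)} \cmp e_* \cmp E_a\inv$, a composition of the affine isometries $E_{e(a)}$ and $E_a\inv$ with the linear map $e_*$ from part~(1). For the statement about maps $\Aut(X) \to \Aut(Y)$, given an isometric embedding $f$ with $f(\id_X) = \id_Y$, I would pick any $a \in X$ and $b \in Y$ and set $e := E_b \cmp f \cmp E_a\inv\maps X \to Y$, which is an isometric embedding as a composite of isometries. From $f(\id_X) = \id_Y$ one gets $e(a) = E_b(\id_Y) = b$, and then Proposition~\ref{extension_operator}~\ref{itm:point_extension} applied to this $e$ and the point $a$ gives $e_* = E_b\inv \cmp e \cmp E_a = E_b\inv \cmp E_b \cmp f \cmp E_a\inv \cmp E_a = f$, so $f$ is the extension operator along $e$ and is therefore linear by part~(1).

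I do not expect a genuine obstacle here; the whole argument is bookkeeping with structure established earlier. The one place to stay alert is matching up the various evaluation maps and their inverses, and making sure Proposition~\ref{extension_operator}~\ref{itm:point_extension} is invoked for precisely the embedding $e = E_b \cmp f \cmp E_a\inv$, so that the identification $f = e_*$ is an honest equality rather than an equality only up to an automorphism fixing a point — that rigidity is exactly what unique $1$-homogeneity of $Y$ (already built into Proposition~\ref{extension_operator}) supplies.
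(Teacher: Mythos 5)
Your proposal is correct and follows essentially the same route as the paper: norm-preservation of $e_*$ via $e_*(f)(e(a)) = e(f(a))$, linearity from being a homomorphism of Boolean groups, affineness of $e$ from the factorization $e = E_{e(a)} \cmp e_* \cmp E_a\inv$, and the identification $f = e_*$ for $e := E_b \cmp f \cmp E_a\inv$. Your last step just makes explicit, via the formula in Proposition~\ref{extension_operator}~\ref{itm:point_extension}, the rigidity the paper summarizes as ``by unique $1$-homogeneity,'' so there is no substantive difference.
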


\begin{example} \label{ex:nonmonotone}
	There is a homogeneous isosceles-free space $X$ that is not monotone.
	We take $X$ to be $2^{\set{0, 1, 2}}$ and define the norm so that $\norm{0} = 0$, $\set{\norm{e_0}, \norm{e_1}, \norm{e_2}} = \set{10, 11, 12}$, $\set{\norm{e_0 + e_1}, \norm{e_1 + e_2}, \norm{e_2 + e_0}} = \set{14, 15, 16}$, and $\norm{e_1 + e_2 + e_3} = 13$.
	The norm is injective, and the triangle inequality is satisfied since the positive distances are in the interval $[a, 2a]$ for $a = 10$, so we have a homogeneous isosceles-free space.
	The given norm is obviously not monotone, but we need to show that the space is not isometric to $2^{\set{0, 1, 2}}$ with a monotone norm.
	By homogeneity, it is enough to consider isometries fixing the zero vector, and such isometries are linear by Proposition~\ref{thm:linear_maps}.
	$\set{e_0, e_1, e_2}$ is a linearly independent set of vectors of norm taking the smallest positive values from $\Dist(X)$, and hence every isomorphism making the norm monotone would need to fix this set up to re-labelling.
	However, the vector $e_0 + e_1 + e_2$ would need to be fixed as well by linearity, and so the other norm would not be monotone.
\end{example}

In the last part of this section we describe amalgamation classes associated to homogeneous isoceles-free spaces, as promised in Remark~\ref{rm:amalgamation}.

Let us call a triple $\tuple{p, q, r}$ of positive real numbers a \emph{triangle} if $p \leq q + r$ and $q \leq r + p$ and $r \leq p + q$.
If $p = q$, then $0$ is the only $r$ completing the triangle.
Similarly, if $p = 0$, then $r = q$ is the only choice completing the triangle.
These triangles are called \emph{degenerate}, while triangles with $p, q, r > 0$ are \emph{non-degenerate}.
Let $T_{p, q, r}(a, b, c)$ denote the space $\set{a, b, c}$ with the metric $d(a, b) = p$, $d(a, c) = q$, $d(b, c) = r$.
This notation automatically implies that if $p = 0$, then $a = b$, and so on.
We write just $T_{p, q, r}$ when the supporting set is irrelevant.

For a class $\F$ of metric spaces we denote the set of distances $\bigcup_{X \in \F} \Dist(X) \subseteq [0, \infty)$ by $\Dist(\F)$.
We also call $\F$ \emph{hereditary} if for every isometric embedding $e\maps A \to B$ with $B \in \F$ we have $A \in \F$.
This automatically means that $\F$ is closed under isomorphic copies.

\begin{proposition} \label{thm:isosceles-free_amalgamation_class}
  Let $\F$ be a class of isosceles-free metric spaces such that for every $p, q \in \Dist(\F)$ there is $A \in \F$ and $a, b, c \in A$ such that $d(a, b) = p$ and $d(a, c) = q$.
  Then the following conditions are equivalent.
  \begin{enumerate}
  \item \label{itm:class}
    $\F$ can be extended to a hereditary class $\overline{\F}$ of isosceles-free spaces with the amalgamation property such that $\Dist(\overline{\F}) = \Dist(\F)$.
  \item \label{itm:coherence}
    We have \begin{enumerate}
    \item \label{itm:coherence_a}
      for every $p, q \in \Dist(\F)$ there is a unique $t(p, q) \in \Dist(\F)$ such that $\tuple{p, q, t(p, q)}$ is a triangle and $T_{p, q, t(p, q)}$ embeds into a member of $\F$,
    \item \label{itm:coherence_b}
      for every $p, q, p', q' \in \Dist(\F)$ with $t(p, q) = t(p', q')$ we have $t(p, p') = t(q, q')$.
    \end{enumerate}
  \item \label{itm:limit}
    There is a homogeneous isosceles-free space $X_{\F}$ such that $\Dist(X_\F) = \Dist(\F)$ and $\Age(X_\F) \supseteq \F$.
  \end{enumerate}
  Moreover, the class $\overline{\F}$ and the space $X_\F$ are unique, and $\overline{\F} = \Age(X_\F)$.
  
  \begin{proof}
    Suppose \ref{itm:class} and let $p, q \in \Dist(\F)$. We show that \ref{itm:coherence} holds.
    By the assumption there is some $r$ such that $T_{p, q, r}$ is embedded into a member of $\F$.
    Since $\overline{\F}$ is hereditary, we have $T_{p, q, r}(a, b, c) \in \overline{\F}$.
    If $T_{p, q, r'} \in \overline{\F}$ for some $r' \neq r$, without loss of generality, we have $T_{p, q, r'}(a, b, c') \in \overline{\F}$, necessarily with $c' \neq c$.
    We view $T_{p, q, r}(a, b, c)$ and $T_{p, q, r'}(a, b, c')$ as one-point extensions of $\set{a, b}$.
    By the amalgamation property it is possible to define $d(c, c')$ so that $T_{p, q, r}(a, b, c) \cup T_{p, q, r'}(a, b, c') \in \overline{\F}$, but this is impossible since $\set{a, c, c'}$ would form an isosceles triangle as $d(a, c) = q = d(a, c')$.
    Hence, $t(p, q)$ is well-defined and unique.
    
    Next, let $p', q' \in \Dist(\F)$ with $t(p', q') = t(p, q) = r$.
    Hence, without loss of generality, $T_{p, q, r}(a, b, c), T_{p', q', r}(a', b, c) \in \overline{\F}$.
    By the amalgamation property, their union is contained in a space $A$ containing also the triangles $T_{p, p', t(p, p')}(b, a, a')$ and $T_{q, q', t(q, q')}(c, a, a')$.
    Hence, $t(p, p') = d_A(a, a') = t(q, q')$.
    
    Now suppose \ref{itm:coherence} in order to show that it implies \ref{itm:limit}.
    We put $X_{\F} := \Dist(\F)$, and $p + q := t(p, q)$ and $\norm{p} := p$ for $p, q \in X_{\F}$.
    It is enough to show that $+$ is a commutative associative addition with the neutral element $0$, and that $\norm{\cdot}$ is an injective norm.
    By considering degenerate triangles, it is easy to see that $0$ is the neutral element and that $p = -p$ for every $p \in X_{\F}$.
    Clearly, $\norm{\cdot}$ is injective and we have $\norm{p} = 0$ if and only if $p = 0$.
    The triangle inequality for $\norm{\cdot}$ follows from the fact that every $\tuple{p, q, t(p, q)}$ is a triangle.
    The only missing property is the associativity of $+$.
    For every $p, q, r \in \Dist(\F)$ we have $t(p, p + q) = q = t(q + r, r)$, and so by \ref{itm:coherence_b} $t(p, q + r) = t(p + q, r)$.
    
    Finally, suppose \ref{itm:limit} and put $\overline{\F} := \Age(X_\F)$. We show that \ref{itm:class} holds.
    Clearly, $\overline{\F}$ is a hereditary class of isosceles-free spaces extending $\F$ with $\Dist(\overline{\F}) = \Dist(\F)$.
    The amalgamation property is also easy to see and follows from the homogeneity of $X_\F$ as in classical Fraïssé theory~\cite[Theorem~7.1.7]{Hodges}.

    This finishes the proof of the equivalences.
    
    Next we show the uniqueness of $\overline{\F}$.
    On one hand, every $A \in \overline{\F}$ satisfies $d(b, c) = t(d(a, b), d(a, c))$ for every $a, b, c \in A$.
    On the other hand, by induction, every such metric space $A$ is a member of $\overline{\F}$.
    Since $\overline{\F}$ is hereditary and $\Dist(\overline{\F}) = \Dist(\F)$, every at most two-point space with distances from $\Dist(\F)$ is in $\F$.
    For every $A$ of cardinality $\card{A} \geq 3$ we write $A$ as the disjoint union $B \cup \set{x} \cup \set{y}$ for some $x, y \in A$, and we observe that $A$ embeds into any amalgamation of $B \cup \set{x}$ and $B \cup \set{y}$ in $\overline{\F}$.
    This is because in any such amalgamation we have $d(x, y) = t(d_A(b, x), d_A(b, y))$ for any $b \in B$.
    
    The uniqueness of $X_\F$ follows from the fact that for any $0 \in X_\F$ the map $\norm{x} := d(x, 0)$ is a bijection $X_\F \to \Dist(\F)$ and we have $d(x, y) = t(\norm{x}, \norm{y})$ for every $x, y \in X_\F$.
  \end{proof}
\end{proposition}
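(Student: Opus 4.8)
The plan is to close the cycle \ref{itm:class}$\Rightarrow$\ref{itm:coherence}$\Rightarrow$\ref{itm:limit}$\Rightarrow$\ref{itm:class} and then dispatch the two uniqueness assertions separately; the identity $\overline{\F}=\Age(X_\F)$ will drop out of the uniqueness arguments. For \ref{itm:class}$\Rightarrow$\ref{itm:coherence}, fix $p,q\in\Dist(\F)$. The standing hypothesis on $\F$ produces a triangle $T_{p,q,r}$ sitting inside some member of $\F$, hence $T_{p,q,r}\in\overline{\F}$ by heredity. To see $r$ is unique, suppose $T_{p,q,r}$ and $T_{p,q,r'}$ with $r\neq r'$ both lie in $\overline{\F}$; regarding them as one-point extensions of a common two-point space of distance $p$ and amalgamating inside $\overline{\F}$, the amalgam is forced to choose a distance between the two apex points, which however lie at equal distance $q$ from one endpoint — an isosceles triangle, contradicting that $\overline{\F}$ is isosceles-free. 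This gives a well-defined $t(p,q)$, i.e. \ref{itm:coherence_a}. For \ref{itm:coherence_b}, when $t(p,q)=t(p',q')=r$ I would amalgamate $T_{p,q,r}(a,b,c)$ with $T_{p',q',r}(a',b,c)$ over their common edge $\set{b,c}$ of length $r$; in the amalgam $d(a,a')$ is determined, and reading it off from the triangle on $\set{a,a',b}$ yields $t(p,p')$, while reading it off from $\set{a,a',c}$ yields $t(q,q')$, both by the uniqueness of $t$ just established.

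For \ref{itm:coherence}$\Rightarrow$\ref{itm:limit} I would set $X_\F$ to be the set $\Dist(\F)$ with addition $p+q:=t(p,q)$ and norm $\norm{p}:=p$. Examining degenerate triangles shows $0$ is neutral and every element is self-inverse; commutativity is the symmetry of the triangle relation; $\norm{\cdot}$ is visibly injective with $\norm{p}=0$ exactly when $p=0$; and $\norm{p+q}\leq\norm{p}+\norm{q}$ is precisely the assertion that $\tuple{p,q,t(p,q)}$ is a triangle. \textbf{The one genuine obstacle is associativity of $+$}, and this is exactly what \ref{itm:coherence_b} buys: uniqueness of $t$ gives $t(p,t(p,q))=q=t(t(q,r),r)$, hence $t(p,p+q)=t(q+r,r)$, and \ref{itm:coherence_b} applied to the quadruple $p,\,p+q,\,q+r,\,r$ then delivers $t(p,q+r)=t(p+q,r)$, i.e. $(p+q)+r=p+(q+r)$. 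Thus $X_\F$ is a normed $\ZZ_2$-linear space with injective norm, so Theorem~\ref{thm:homog_iso-free} makes it a homogeneous isosceles-free space; its metric is $d(p,q)=t(p,q)$, so $\Dist(X_\F)=\Dist(\F)$ because $t(0,q)=q$, and each $A\in\F$ embeds into $X_\F$ via $x\mapsto d_A(x,a_0)$ for any fixed $a_0\in A$ — injective since $A$ is isosceles-free, and distance-preserving because $d_A(x,y)=t(d_A(a_0,x),d_A(a_0,y))$ by the uniqueness in \ref{itm:coherence_a}.

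For \ref{itm:limit}$\Rightarrow$\ref{itm:class}, put $\overline{\F}:=\Age(X_\F)$: it is a hereditary class of isosceles-free spaces extending $\F$ with $\Dist(\overline{\F})=\Dist(X_\F)=\Dist(\F)$, and it has the amalgamation property since $X_\F$ is ultrahomogeneous (Proposition~\ref{ultra-free}) and amalgams may be realized as finite subspaces of $X_\F$, which remain isosceles-free. For the uniqueness of $\overline{\F}$, the same isosceles obstruction used above forces every member of any such class to obey the coherence law $d(b,c)=t(d(a,b),d(a,c))$ on all triples; conversely every finite isosceles-free space $A$ over $\Dist(\F)$ satisfying this law lies in $\overline{\F}$, by induction on $\card{A}$: writing $A=B\cup\set{x}\cup\set{y}$ as a disjoint union, $B\cup\set{x}$ and $B\cup\set{y}$ are in $\overline{\F}$ by induction, and in any $\overline{\F}$-amalgam of them over $B$ the distance between the two new points is forced (by coherence of the amalgam) to equal $t(d(b,x),d(b,y))=d_A(x,y)$ for $b\in B$, so $A$ embeds into that amalgam. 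Hence $\overline{\F}$ is exactly this coherence class, so it is unique and coincides with $\Age(X_\F)$.

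Finally, for the uniqueness of $X_\F$: if $X$ is any homogeneous isosceles-free space with $\Dist(X)=\Dist(\F)$ and $\Age(X)\supseteq\F$, then $\Age(X)$ itself satisfies \ref{itm:class}, so by the previous paragraph $\Age(X)=\overline{\F}$ and every finite subspace of $X$ obeys the coherence law. Fixing $0\in X$, the distance map $D_0\maps X\to\Dist(X)=\Dist(\F)$ is a bijection by Observation~\ref{thm:evaluation}, and coherence applied to triples $\set{0,x,y}$ reads $d_X(x,y)=t(D_0(x),D_0(y))$, exhibiting $D_0$ as an isometry onto $X_\F$. I expect the associativity step of the second paragraph and the two-directional induction identifying $\overline{\F}$ with $\Age(X_\F)$ to be the delicate points; the rest is a matter of unwinding definitions and invoking the isosceles obstruction and Theorem~\ref{thm:homog_iso-free}.
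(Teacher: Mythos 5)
Your proposal is correct and follows essentially the same route as the paper's proof: the same cycle of implications with the isosceles/amalgamation obstruction for the uniqueness of $t$, the amalgamation over the common $r$-edge for the coherence condition, the construction of $X_\F$ as $\Dist(\F)$ with $p+q:=t(p,q)$ and associativity extracted from \ref{itm:coherence_b} exactly as in the paper, $\overline{\F}:=\Age(X_\F)$, and the same induction and distance-map arguments for uniqueness. The only difference is that you spell out a few details the paper leaves implicit (e.g.\ the embedding $x\mapsto d_A(x,a_0)$ witnessing $\F\subseteq\Age(X_\F)$), which is fine.
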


\begin{remark}
  Note that the previous proposition covers also uncountable distance sets and uncountable limit spaces.
  It is the unique amalagamation that allows us to build uncountable homogeneous structures directly in this case.
\end{remark}

\begin{remark}
  It is known that the class $\F_R$ of all finite metric spaces with distances from a set $0 \in R \subseteq [0, \infty)$ has the amalgamation property if and only if it satisfies the \emph{four-values condition} \cite[Proposition~1.4]{DLPS}, see also \cite[Theorem~1.4]{Sauer}.
  In the context of isoceles-free spaces the analogue of the four-values conditions is the condition \ref{itm:coherence_b}.
  It is formally similar and it also corresponds to the amalgamation of two one-point extensions over a two-point space.
  In fact, when we restrict $\F_R$ to the subclass $\F_{R, t}$ of all isosceles-free spaces compatible with the given scheme $t$ \ref{itm:coherence_a}, then \ref{itm:coherence_b} characterizes the amalgamation property as well.
\end{remark}

\begin{corollary} \label{thm:isosceles-free_amalgamation_class_corollary}
  Let $0 \in R \subseteq [0, \infty)$ and let $t\maps R^2 \to R$ be such that for every $p, q \in R$ we have
  \begin{enumerate}
  \item $t(p, q) = t(q, p) \leq p + q$,
  \item $t(t(p, q), q) = t(p, 0) = p$,
  \end{enumerate}
  Moreover, let $\F_{R, t}$ be the class of all finite metric spaces $A$ such that for every $a, b, c \in A$ we have $t(d(a, b), d(a, c)) = d(b, c)$.
  Then $\F_{R, t}$ is a hereditary class of isosceles-free spaces with $\Dist(\F_{R, t}) = R$, and it has the amalgamation property if and only if $t$ satisfies \ref{thm:isosceles-free_amalgamation_class}~\ref{itm:coherence_b}.
  
  \begin{proof}
    Clearly, $\F_{R, t}$ is a hereditary class of metric spaces.
    Note that we have $t(p, p) = t(t(0, p), p) = 0$ for every $p \in R$,
    and also $t(p, q) \notin \set{p, q}$ if $p, q > 0$ since $t(p, q) = q$ would imply $p = t(t(p, q), q) = t(q, q) = 0$.
    Hence, all members of $\F_{R, t}$ are isosceles-free.
    It is easy to check that the properties of $t$ imply that the triangle $T_{p, q, t(p, q)}(a, b, c)$ is a member of $\F_{R, t}$, and hence $\Dist(\F_{R, t}) = R$ and we can apply Proposition~\ref{thm:isosceles-free_amalgamation_class}.
    Since $\F_{R, t}$ is the largest class compatible with the scheme $t$, we have $\overline{\F_{R, t}} = \F_{R, t}$ if the amalgamation extension exists.
    Hence the claim follows from Proposition~\ref{thm:isosceles-free_amalgamation_class}~\ref{itm:coherence}.
  \end{proof}
\end{corollary}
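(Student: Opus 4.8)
The plan is to reduce the statement to Proposition~\ref{thm:isosceles-free_amalgamation_class}. I would first extract from the two hypotheses on $t$ --- the subadditivity $t(p, q) \le p + q$ and the cancellation $t(t(p, q), q) = p = t(p, 0)$ --- the short list of identities that makes every later check mechanical: commutativity together with $t(p, 0) = p$ gives $t(0, p) = p$, hence $t(p, p) = t(t(0, p), p) = 0$; for $p, q > 0$ one has $t(p, q) \notin \set{p, q}$, since $t(p, q) = q$ would force $p = t(t(p, q), q) = t(q, q) = 0$; commutativity and cancellation give the ``cyclic'' identities $t(q, t(p, q)) = p$ and $t(p, t(p, q)) = q$; and applying subadditivity to the pairs $\tuple{t(p, q), q}$ and $\tuple{t(p, q), p}$ gives $p \le t(p, q) + q$ and $q \le t(p, q) + p$. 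With these in hand, $\F_{R, t}$ is hereditary because its defining condition quantifies over triples; every member is isosceles-free because $d(a, b) = d(a, c) = p > 0$ would give $d(b, c) = t(p, p) = 0$; every at most two-point space with distances in $R$ is a member; and for $p, q \in R$ the three-point space $T_{p, q, t(p, q)}(a, b, c)$ is a genuine metric space (by the last two inequalities) and satisfies the scheme identity on every ordered triple, the only nondegenerate instances being the cyclic identities above. Thus $\Dist(\F_{R, t}) = R$ and $\F_{R, t}$ satisfies the standing hypothesis of Proposition~\ref{thm:isosceles-free_amalgamation_class}.

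I would then record two remarks. First, the completing-distance scheme of \ref{itm:coherence_a} of Proposition~\ref{thm:isosceles-free_amalgamation_class}, applied to $\F := \F_{R, t}$, is exactly the prescribed map $t$: existence is witnessed by $T_{p, q, t(p, q)} \in \F_{R, t}$, and uniqueness holds because the defining condition of $\F_{R, t}$ forces $r = t(p, q)$ whenever $T_{p, q, r}(a, b, c) \in \F_{R, t}$. Second, $\F_{R, t}$ is maximal in the following sense: if $\overline{\F}$ is a hereditary class of isosceles-free spaces with $\Dist(\overline{\F}) = R$ that contains every $T_{p, q, t(p, q)}$, then $\overline{\F} \subseteq \F_{R, t}$. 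Indeed such an $\overline{\F}$ satisfies the hypothesis of Proposition~\ref{thm:isosceles-free_amalgamation_class} with scheme $t$ (by the same existence/uniqueness reasoning), and then, as in the first half of the proof of that proposition, every $A \in \overline{\F}$ obeys $d(b, c) = t(d(a, b), d(a, c))$ for all $a, b, c \in A$ --- the sub-triangle on $\set{a, b, c}$ lies in $\overline{\F}$ by hereditariness and uniqueness in \ref{itm:coherence_a} applies --- so that $A \in \F_{R, t}$.

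The equivalence now follows from Proposition~\ref{thm:isosceles-free_amalgamation_class}. If $\F_{R, t}$ has the amalgamation property, then $\overline{\F} := \F_{R, t}$ witnesses condition \ref{itm:class} of the proposition for $\F := \F_{R, t}$, so condition \ref{itm:coherence} holds, and in particular \ref{itm:coherence_b} holds for the scheme of $\F_{R, t}$, which by the first remark is $t$. Conversely, if $t$ satisfies \ref{itm:coherence_b}, then --- \ref{itm:coherence_a} already holding with scheme $t$ --- condition \ref{itm:coherence} of the proposition holds for $\F := \F_{R, t}$, hence so does \ref{itm:class}: there is a hereditary amalgamation class $\overline{\F} \supseteq \F_{R, t}$ of isosceles-free spaces with $\Dist(\overline{\F}) = R$. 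Since $\overline{\F} \supseteq \F_{R, t}$ contains every $T_{p, q, t(p, q)}$, the maximality remark gives $\overline{\F} \subseteq \F_{R, t}$, so $\overline{\F} = \F_{R, t}$ and $\F_{R, t}$ itself has the amalgamation property.

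I expect the main obstacle to be organizational rather than deep: one must check carefully that the scheme produced abstractly inside Proposition~\ref{thm:isosceles-free_amalgamation_class} coincides with the prescribed function $t$ (so that \ref{itm:coherence_b} for that scheme is literally the hypothesis on $t$), and one must keep the inclusions in the maximality argument straight so that the abstract amalgamation class is forced to collapse back onto $\F_{R, t}$. The only genuinely computational point --- that $T_{p, q, t(p, q)}$ really is a member of $\F_{R, t}$, i.e.\ satisfies the triangle inequality and the scheme identity on all ordered triples including the degenerate ones --- is dispatched entirely by the short list of consequences of the two axioms collected at the outset.
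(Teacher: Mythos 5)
Your overall route is the same as the paper's: check that $\F_{R,t}$ is hereditary, isosceles-free, has $\Dist(\F_{R,t}) = R$ and contains all scheme triangles $T_{p,q,t(p,q)}$, identify the scheme of Proposition~\ref{thm:isosceles-free_amalgamation_class}~\ref{itm:coherence_a} with the prescribed $t$, and then use maximality of $\F_{R,t}$ among classes compatible with $t$ to force $\overline{\F} = \F_{R,t}$. Your preliminary identities, the verification that $T_{p,q,t(p,q)}$ is a metric space satisfying the scheme on all ordered triples, and the forward direction of the equivalence are all correct and match the paper.

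There is, however, a gap in your ``maximality remark''. As stated --- every hereditary class $\overline{\F}$ of isosceles-free spaces with $\Dist(\overline{\F}) = R$ containing all $T_{p,q,t(p,q)}$ satisfies $\overline{\F} \subseteq \F_{R,t}$ --- it is false: for $R \subseteq \set{0} \cup [1,2]$ (as in the paper's final example) the class of \emph{all} finite isosceles-free spaces with distances in $R$ meets your hypotheses but contains triangles $T_{p,q,r}$ with $r \neq t(p,q)$. The justification conflates two uniqueness statements: uniqueness in \ref{itm:coherence_a} (with $\F = \F_{R,t}$) concerns triangles that embed into a \emph{member of $\F_{R,t}$}, whereas a sub-triangle of an arbitrary $A \in \overline{\F}$ is only known to lie in $\overline{\F}$ --- that it embeds into a member of $\F_{R,t}$ is precisely what you are trying to prove, so ``uniqueness in \ref{itm:coherence_a} applies'' begs the question. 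The repair is short and is what the paper's proof implicitly relies on: in the only application of the remark, $\overline{\F}$ has the amalgamation property (it is produced by condition \ref{itm:class}), and then the completing distance is unique \emph{within $\overline{\F}$} --- either rerun the amalgamation-of-two-one-point-extensions argument from the proof of \ref{itm:class}$\Rightarrow$\ref{itm:coherence_a}, or invoke the ``Moreover'' (uniqueness of $\overline{\F}$) clause of Proposition~\ref{thm:isosceles-free_amalgamation_class}, whose proof shows that every $A \in \overline{\F}$ satisfies $d(b,c) = t(d(a,b),d(a,c))$, i.e.\ $\overline{\F} \subseteq \F_{R,t}$. With the amalgamation hypothesis added to your remark, the rest of your argument goes through and coincides with the paper's.
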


\begin{example}
  We conclude with an example of a finite set of distances $0 \in R \subseteq [0, \infty)$ with a scheme $t\maps R^2 \to R$ satisfying the conditions of the previous corollary, but not \ref{thm:isosceles-free_amalgamation_class}~\ref{itm:coherence_b}.
  We shall take $R \subseteq \set{0} \cup [1, 2]$ so that the triangle inequality becomes trivial.
  In that case $t$ satisfying the conditions can be equivalently described as a family $\mathcal{T}$ of $3$-point subsets of $R \setminus \set{0}$ such that every $2$-point subset of $R \setminus \set{0}$ is contained in exactly one member of $\mathcal{T}$.
  Let us pick any $9$-point subset $R \setminus \set{0} \subseteq [1, 2]$ and identify it with the $2$-dimensional linear space $\ZZ_3 \times \ZZ_3$ over the $3$-element field.
  Taking $\mathcal{T}$ consisting of the twelve $3$-point affine lines in $\ZZ_3 \times \ZZ_3$ works.
  The condition \ref{thm:isosceles-free_amalgamation_class}~\ref{itm:coherence_b} fails as we have e.g. $t(\tuple{0, 1}, \tuple{0, 2}) = \tuple{0, 0} = t(\tuple{1, 0}, \tuple{2, 0})$, while $t(\tuple{0, 1}, \tuple{1, 0}) = \tuple{2, 2} \neq \tuple{1, 1} = t(\tuple{0, 2}, \tuple{2, 0})$.
\end{example}

\section{Decompositions of homogeneous spaces} \label{sec:decomp}

In this section we study two invariant decompositions of homogeneous spaces based on non-repeating distances.
This will give more insight into the structure of homogeneous spaces that are close to being isosceles-free and will be applied in Section \ref{sec:distances}.

\begin{definition} \label{def:invariant_decomposition}
	We say that an equivalence $\sim$ on a metric space $X$ and the corresponding decomposition $X/{\sim}$ are \emph{invariant} if for every automorphism $f \in \Aut(X)$ and a component $C \in X/{\sim}$ we have that $f[C]$ is a component.
	Equivalently, $x \sim y$ implies $f(x) \sim f(y)$ for every $x, y \in X$.
	
	By $\Aut_*(X)$ we denote the family of all automorphisms $f \in \Aut(X)$ setwise fixing all components, i.e. $f[C] = C$ for every $C \in X/{\sim}$.
\end{definition}

\begin{proposition} \label{thm:invariant}
	Let $\sim$ be an invariant decomposition of a metric space $X$.
	\begin{enumerate}
		\item $\Aut_*(X)$ is a normal subgroup of $\Aut(X)$.
		\item The restriction map $\rho_C\maps \Aut_*(X) \to \Aut(C)$, $f\mapsto f|_C$, is a group homomorphism.
		\item If $X$ is $1$-homogeneous, all components $C \in X/{\sim}$ are isometric.
		\item If $X$ is $n$-homogeneous or ultrahomogeneous, so is every component $C \in X/{\sim}$, and this is witnessed by the subgroup $\im(\rho_C) \leq \Aut(C)$.
	\end{enumerate}
	
	\begin{proof} \hfill
		\begin{enumerate}
			\item Clearly, $\Aut_*(X) \subseteq \Aut(X)$ is a subgroup.
				Also for every $f \in \Aut_*(X)$, $g \in \Aut(X)$, and a component $C$, we have $g[f[g^{-1}[C]] =: g[f[C']] = g[C'] = g[g^{-1}[C]] = C$, and so $\Aut_*(X)$ is a normal subgroup.
			\item This is clear.
			\item For every two components $C, C'$ we pick points $x \in C$ and $y \in C'$.
			Since $X$ is $1$-homogeneous there is $f \in \Aut(X)$ such that $f(x) = y$ and so $f[C] = C'$, showing that $C$ and $C'$ are isometric.
			\item	This follows from Proposition~\ref{thm:ultrahom_subspace} since every component is quasi-invariant.
			\qedhere
		\end{enumerate}
	\end{proof}
\end{proposition}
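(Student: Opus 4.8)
The plan is to treat the four items in turn; (1)--(3) are quick verifications, and (4) reduces to an earlier proposition plus one point that I expect to be the only substantive difficulty. For (1), $\Aut_*(X)$ contains $\id_X$ and is closed under composition and inverses, hence is a subgroup; for normality, given $f \in \Aut_*(X)$, $g \in \Aut(X)$ and a component $C$, invariance of $\sim$ makes $g\inv[C]$ a component, so $f[g\inv[C]] = g\inv[C]$ and thus $(g \cmp f \cmp g\inv)[C] = g[g\inv[C]] = C$. For (2), any $f \in \Aut_*(X)$ satisfies $f[C] = C$, so $f|_C$ is a bijection of $C$ and an isometry as a restriction of one, whence $f|_C \in \Aut(C)$; the identity $(f \cmp g)|_C = f|_C \cmp g|_C$ makes $\rho_C$ a homomorphism. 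For (3), given components $C$ and $C'$, choose $x \in C$, $y \in C'$ and use $1$-homogeneity to get $f \in \Aut(X)$ with $f(x) = y$; then $f[C]$ is a component containing $y$, so $f[C] = C'$ and $f|_C\maps C \to C'$ is a surjective isometry.

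For (4), the first step is to observe that each component $C$ is quasi-invariant: if $f \in \Aut(X)$ and $f[C] \cap C \neq \emptyset$, then $f[C]$ is a component meeting $C$, hence equals $C$ since distinct components are disjoint. Proposition~\ref{thm:ultrahom_subspace} then yields at once that $C$ is $n$-homogeneous (respectively ultrahomogeneous) and that this is witnessed by restrictions to $C$ of automorphisms of $X$; moreover any $F \in \Aut(X)$ extending a nonempty partial isometry $f\maps A \to B$ with $A, B \subseteq C$ has $F[C] \supseteq B \neq \emptyset$, forcing $F[C] = C$, so $F|_C \in \Aut(C)$.

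The remaining — and, I expect, only nontrivial — point is to sharpen ``restrictions of automorphisms of $X$'' to ``elements of $\im(\rho_C)$'', that is, to show that when $\card{A} \leq n$ the extending automorphism $F$ can be chosen inside $\Aut_*(X)$. The approach I would take is to start from an arbitrary extension $F$ (with $F[C] = C$ as above) and then, using $1$-homogeneity of $X$ together with item (3) to transport structure between components, try to correct $F$ on the components other than $C$ so that it fixes each of them setwise while leaving its restriction to $A$ unchanged. Turning such a componentwise correction into a genuine global isometry — the between-component distances being what must be controlled — is the crux of the matter, and it is where I expect the bulk of the work to lie (and where one should check whether some additional mildness hypothesis on the decomposition is really needed).
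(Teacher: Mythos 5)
Your proofs of (1)--(3) and the main body of (4) coincide with the paper's: the same conjugation computation for normality, the same one-line arguments for (2) and (3), and for (4) the same reduction---each component is quasi-invariant because a component meeting its image under an automorphism must equal it, so Proposition~\ref{thm:ultrahom_subspace} yields that $C$ is $n$-homogeneous (resp.\ ultrahomogeneous) as witnessed by restrictions of automorphisms of $X$ that stabilize $C$ setwise. The paper's proof of (4) consists of exactly this observation and nothing more.

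The point you single out as the crux---upgrading ``restriction of some $F \in \Aut(X)$ with $F[C] = C$'' to ``restriction of some $F \in \Aut_*(X)$'', i.e.\ membership in $\im(\rho_C)$---is indeed not addressed by the paper, and your suspicion that an extra hypothesis would be needed is justified: for a general invariant decomposition this stronger clause is false, so the corrective scheme you sketch cannot succeed. Take $X = C_{12}$ (ultrahomogeneous by Example~\ref{circle-graph}) decomposed into the four congruence classes modulo $4$, say $C = \set{0, 4, 8}$; this decomposition is invariant, but $\Aut_*(X)$ consists only of the rotations by $0$, $4$, $8$, so $\im(\rho_C) \cong \ZZ_3$ contains no element fixing $0$ and sending $4 \mapsto 8$, even though this is an isometry between $2$-element subsets of the equilateral component $C$ (its unique extension to $C_{12}$ is the reflection $i \mapsto -i$, which swaps two of the other components). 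What is true, and what your first paragraph (like the paper's citation of Proposition~\ref{thm:ultrahom_subspace}) establishes, is witnessing by the image of the setwise stabilizer of $C$ under restriction; this weaker form is all that the later applications use (Theorems~\ref{thm:isosceles-free-decomposition} and~\ref{thm:iso-generated} need only homogeneity and pairwise isometry of components). For the decomposition into isosceles-generated components the stronger clause does hold, but only because Theorem~\ref{thm:iso-generated} shows that an automorphism fixing one component setwise fixes them all, which is an additional property of that particular decomposition rather than a consequence of invariance alone.
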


In the following, we define two invariant decompositions of homogeneous spaces related to isosceles-free spaces.

\begin{definition} \label{def:singleton}
	Let $X$ be a $1$-homogeneous space.
	We say that a distance $s \in \Dist(X)$ is \emph{singleton} if for every $x \in X$ there exists a unique $y \in Y$ such that $d(x, y) = s$,
	i.e. there is no isosceles triangle with side lengths $\tuple{s, s, t}$ in $X$ (for $t > 0$).
	By $1$-homogeneity, it is enough if the defining condition holds at a single point $x \in X$.
	
	Let $S_X \subseteq \Dist(X)$ be the set of all singleton distances in $X$.
	Clearly, $X$ is isosceles-free if and only if $S_X = \Dist(X)$.
\end{definition}

\begin{theorem}[Decomposition into isosceles-free components]\label{thm:isosceles-free-decomposition}
	Let $X$ be a $2$-homogeneous space and let $x \sim y$ if $d(x, y) \in S_X$ for $x, y \in X$.
	We have that $\sim$ is an invariant equivalence relation inducing a decomposition of $X$ into pairwise isometric homogeneous isosceles-free spaces.
\end{theorem}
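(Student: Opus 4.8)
The plan is to verify the asserted properties one at a time, the crux being transitivity of $\sim$. \emph{Invariance} is immediate: any $f \in \Aut(X)$ preserves distances and $S_X$ is a fixed subset of $\Dist(X)$, so $x \sim y$ (i.e.\ $d(x,y) \in S_X$) forces $d(f(x),f(y)) = d(x,y) \in S_X$, i.e.\ $f(x) \sim f(y)$. For $\sim$ to be an equivalence relation, reflexivity holds because $d(x,x) = 0 \in S_X$ (the point at distance $0$ is always unique), and symmetry is trivial; so everything reduces to transitivity, which is where I expect the real work to be and where $2$-homogeneity is essential.

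To prove transitivity, suppose $d(x,y) = s \in S_X$ and $d(y,z) = t \in S_X$, and set $r := d(x,z)$; I want to show $r \in S_X$. Fix $w \in X$. Some point at distance $r$ from $w$ exists: by $1$-homogeneity pick $g \in \Aut(X)$ with $g(x) = w$ and take $g(z)$. For uniqueness, suppose $d(w,u) = d(w,u') = r$; since $d(x,z) = r$, $2$-homogeneity provides $f, f' \in \Aut(X)$ with $f(x) = w = f'(x)$, $f(z) = u$, and $f'(z) = u'$. Now $d(f(x), f(y)) = d(x,y) = s$ and $s \in S_X$, so $f(y)$ must be \emph{the} unique point at distance $s$ from $w$; likewise $f'(y)$ is that same point, call it $w_1$. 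Then $d(f(y), f(z)) = d(y,z) = t$ together with $t \in S_X$ forces $f(z)$ to be the unique point at distance $t$ from $w_1$, and likewise $f'(z)$; hence $u = f(z) = f'(z) = u'$. Thus $r \in S_X$, i.e.\ $x \sim z$. (Degenerate cases where $s$, $t$, or $r$ is $0$ are covered by the same argument, reading ``unique point at distance $0$'' as the point itself.)

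Once transitivity is in hand, the components $C \in X/{\sim}$ are well defined, and any two points of a common component are actually $\sim$-related, so $\Dist(C) \subseteq S_X$. Each $C$ is then \emph{isosceles-free}: if $x,y,z \in C$ with $d(x,y) = d(x,z)$, this common distance is singleton, so $x$ has a unique point at that distance and $y = z$. Finally, $\sim$ is invariant and $X$ is $2$-homogeneous, hence $1$-homogeneous, so Proposition~\ref{thm:invariant} yields that all components are pairwise isometric and each is $1$-homogeneous (witnessed by restrictions of automorphisms of $X$); being $1$-homogeneous and isosceles-free, each component is a homogeneous isosceles-free space by Proposition~\ref{ultra-free}. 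The single delicate point in this plan is transitivity, and it is precisely there that one needs $2$-homogeneity rather than mere $1$-homogeneity.
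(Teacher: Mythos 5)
Your proof is correct and follows essentially the same route as the paper: the crux is transitivity via $2$-homogeneity, extending a two-point isometry on $\set{x,z}$ and using the singleton distances $d(x,y)$ and $d(y,z)$ to pin down the images of $y$ and then $z$, followed by Proposition~\ref{thm:invariant} and Proposition~\ref{ultra-free} for the components. The only cosmetic difference is that the paper argues by contradiction with a single automorphism fixing $x$, whereas you compare two automorphisms at an arbitrary basepoint $w$ -- the underlying mechanism is identical.
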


\begin{proof}
	Symmetry and reflexivity are trivial, transitivity follows from the fact that if we assume $x\sim y\sim z$ and $x\nsim z$, this means that $d(x,z)\notin S$, therefore (due to homogeneity) there exists some point $w\in X$ with $d(x,w)=d(x,z)$, meaning the function $f\colon\{x,z\}\rightarrow\{x,w\}$ which fixes $x$ and maps $z$ to $w$ is an isometry. However, due to $2$-homogeneity, we now know that there exists an automorphism $F$ which extends $f$. It follows that $d(x,y)=d(x,F(y))$, but since we assumed $x\sim y$, this means $y=F(y)$. Analogously, $d(y,z)=d(y,F(z))=d(y,w)$, but because we assumed $y\sim z$, this implies $z=w$.
	
	Clearly, the equivalence $\sim$ is invariant since isometries preserve distances, and the components $C \in X/{\sim}$ are isosceles-free.
	By Proposition~\ref{thm:invariant}, they are also pairwise isometric and $2$-homogeneous, and hence ultrahomogeneous by Proposition~\ref{ultra-free}.
\end{proof}

A simple example of the decomposition into isosceles-free components follows.
It also shows that the map $\rho_C\maps \Aut_*(X) \to \Aut(C)$ is not necessarily injective.
\begin{example}
  Let $C_4 = \set{1, 2, 3, 4}$ be the four element circle graph considered in Example~\ref{circle-graph}.
	Then the isosceles-free components of $C_4$ are $C = \set{1, 3}$ and $C' = \set{2, 4}$, the pairs of antipodal points in $C_4$, with all distances between two components being equal to one.
	
  Now $\Aut_*(C_4) = \set{(1\ 3)^i (2\ 4)^j \in \Sigma_4: i, j \in \set{0, 1}}$ is a $4$-element group, whereas $\Aut(C) = \set{(1\ 3)^i: i \in \set{0, 1}}$ and $\Aut(C') = \set{(2\ 4)^i: i \in \set{0, 1}}$ are both $2$-element groups.
	Hence, the restriction maps $\rho_C, \rho_{C'}$ are surjective, but not injective.
\end{example}

We define another decomposition, which is in a sense dual to the previous one.

\begin{theorem}[Decomposition into isosceles-generated components] \label{thm:iso-generated}
	Let $X$ be a $1$-ho\-mo\-ge\-neous space.
	Let $\sim$ be the equivalence generated by the relation $x \sim y$ if there is $z \neq y$ such that $d(x, y) = d(x, z)$, i.e. we identify points along non-singleton distances, or equivalently collapse all non-degenerate isosceles triangles.
	\begin{enumerate}
		\item
			The equivalence $\sim$ induces an invariant decomposition into isometric $1$-homogeneous components.
			In particular, automorphisms map components onto components.
		\item \label{itm:unique}
			If $\card{X/{\sim}} \geq 2$, then $X$ is uniquely $1$-homogeneous.
		\item \label{itm:swap}
			For every $f \in \Aut(X)$ either all components $C$ are fixed by $f$ (setwise), or none of them are.
			In the latter case we have $f \circ f = \id$.
		\item \label{itm:restr}
			For every $C \in X/{\sim}$ the homomorphism $\rho_C\maps \Aut_*(X) \to \Aut(C)$ is an embedding.
		\item
			For every $h \in \Aut_*(X)$ and $f \in \Aut(X) \setminus \Aut_*(X)$ we have $f \circ h \circ f^{-1} = h^{-1}$.
		\item \label{itm:Boolean}
			If $\card{X/{\sim}} \geq 3$, then $\Aut(X)$ is Boolean, i.e. $X$ is a Boolean metric space.
		\item \label{itm:Abelian}
			If $\card{X/{\sim}} \geq 2$, then $\Aut_*(X)$ is Abelian.
	\end{enumerate}
\end{theorem}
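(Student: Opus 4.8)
The plan is to isolate two elementary observations and derive all seven items from them, while being careful which parts need a lower bound on $\card{X/{\sim}}$. Write $S_X$ for the set of singleton distances and $[x]$ for the $\sim$-class of $x$. \emph{Observation A}: every distance realised between two distinct components lies in $S_X$ — for if $x \not\sim y$ (so $x \neq y$) and $d(x, y) \notin S_X$, then by $1$-homogeneity there is $z \neq y$ with $d(x, z) = d(x, y)$, so $x$ and $y$ are joined by the generating relation, hence $x \sim y$, a contradiction. \emph{Observation B}: if $f \in \Aut(X)$ fixes $x$, then $f$ fixes every $y$ with $d(x, y) \in S_X$ (since $d(x, f(y)) = d(x, y)$ and a singleton distance is realised from $x$ at only one point); by Observation A this forces $f$ to fix every point outside $[x]$. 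The first statement is then immediate: automorphisms preserve the generating relation (they preserve distances), hence its transitive closure $\sim$, so they map components onto components, and the remaining claims (components isometric, $1$-homogeneous, witnessed by $\im \rho_C$) follow from Proposition~\ref{thm:invariant} applied to the $1$-homogeneous space $X$.

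For \ref{itm:unique}, assume $\card{X/{\sim}} \geq 2$; if $h \in \Aut(X)$ fixes some $x$, then by Observation B it fixes $X \setminus [x]$ pointwise, and choosing $z \notin [x]$ it then fixes $X \setminus [z]$ pointwise, in particular all of $[x]$, so $h = \id_X$, and $X$ is uniquely $1$-homogeneous. For \ref{itm:swap} there is nothing to prove if $\card{X/{\sim}} = 1$, so assume $\card{X/{\sim}} \geq 2$. If $f[C] = C$ for some component $C$, pick $a \in C$; since $C$ is $1$-homogeneous witnessed by $\im \rho_C$ (Proposition~\ref{thm:invariant}) there is $h \in \Aut_*(X)$ with $h(a) = f(a)$, so $h^{-1} \circ f$ fixes $a$ and hence is $\id_X$ by \ref{itm:unique}, giving $f = h \in \Aut_*(X)$; thus an $f$ outside $\Aut_*(X)$ fixes no component. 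In that case, for any $x$ set $y := f(x)$, $z := f(f(x))$; here $x \neq y$ (else $f$ would fix $[x]$, so $f \in \Aut_*(X)$) and $d(x, y) = d(y, z)$, so if $z \neq x$ the distance $d(x, y)$ would be non-singleton and $x \sim y$ would follow, putting $x$ and $f(x)$ in the same component — absurd; hence $f \circ f = \id_X$. For \ref{itm:restr}, $\rho_C$ is a group homomorphism by Proposition~\ref{thm:invariant}, it is the identity when $\card{X/{\sim}} = 1$, and when $\card{X/{\sim}} \geq 2$ any $h \in \Aut_*(X)$ with $h|_C = \id_C$ fixes a point of $C$, hence is $\id_X$ by \ref{itm:unique}.

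For the fifth statement, let $h \in \Aut_*(X)$ and $f \notin \Aut_*(X)$; by \ref{itm:swap}, $f \circ f = \id_X$, and since $h \circ f \notin \Aut_*(X)$ (as $\Aut_*(X)$ is a subgroup) also $(h \circ f) \circ (h \circ f) = \id_X$ by \ref{itm:swap}; expanding and cancelling gives $f \circ h \circ f = h^{-1}$, i.e.\ $f \circ h \circ f^{-1} = h^{-1}$. For \ref{itm:Boolean}, let $G \leq \Sigma(X/{\sim})$ be the image of the natural action of $\Aut(X)$ on $X/{\sim}$; it is transitive by $1$-homogeneity and free, because a non-trivial $g \in G$ fixing a component lifts to an automorphism fixing a component, which by \ref{itm:swap} lies in $\Aut_*(X)$, whence $g = 1$. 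Thus $G$ acts regularly, and since every non-trivial element of $G$ lifts to some $f \notin \Aut_*(X)$ with $f \circ f = \id_X$, the group $G$ has exponent $2$; hence $G$ is a $\ZZ_2$-vector space with $\card{X/{\sim}} = \card{G}$, so $\card{X/{\sim}} \geq 3$ forces $\dim_{\ZZ_2} G \geq 2$, and we may pick linearly independent $\bar f_1, \bar f_2 \in G$ with lifts $f_1, f_2$, so that $f_1, f_2, f_1 \circ f_2$ and $f_2^{-1} \circ f_1$ all lie outside $\Aut_*(X)$. For $h \in \Aut_*(X)$ the fifth statement gives $f_1 \circ h \circ f_1^{-1} = h^{-1} = f_2 \circ h \circ f_2^{-1}$, so $h$ commutes with $f_2^{-1} \circ f_1$; since $f_2^{-1} \circ f_1 \notin \Aut_*(X)$, the fifth statement again says this element conjugates $h$ to $h^{-1}$, whence $h = h^{-1}$. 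Combined with \ref{itm:swap}, every element of $\Aut(X)$ is an involution, so $\Aut(X)$ is Boolean and $X$ is a Boolean metric space. Finally \ref{itm:Abelian}: assuming $\card{X/{\sim}} \geq 2$, fix any $f \notin \Aut_*(X)$ and apply the fifth statement to $h_1$, $h_2$ and $h_1 \circ h_2 \in \Aut_*(X)$ to get $h_1^{-1} \circ h_2^{-1} = (h_1 \circ h_2)^{-1} = h_2^{-1} \circ h_1^{-1}$; replacing each $h_i$ by $h_i^{-1}$ yields commutativity.

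I expect the real work here to be bookkeeping rather than a single hard idea: \ref{itm:unique}, the content of \ref{itm:swap}, the fifth statement and \ref{itm:Abelian} are empty or trivial when $\card{X/{\sim}} = 1$, and the one-component case of \ref{itm:restr} must be handled separately, so one must be disciplined about invoking each item only once the needed bound on $\card{X/{\sim}}$ is available. The genuinely non-routine step is the counting argument inside \ref{itm:Boolean} — that a transitive action all of whose lifted non-trivial elements are involutions is automatically regular — which quietly upgrades the hypothesis $\card{X/{\sim}} \geq 3$ to $\card{X/{\sim}} = 2^k$ with $k \geq 2$, and is exactly what makes two independent ``reflections'' available to force $h = h^{-1}$.
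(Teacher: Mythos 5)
Your items (1), (2), (4), and — granted items \ref{itm:swap} and (5) — your arguments for (5), \ref{itm:Boolean} and \ref{itm:Abelian} are all correct and essentially the paper's; Observations A and B are exactly the right elementary facts, and your derivation of unique $1$-homogeneity from them is fine. The genuine gap is in the first half of \ref{itm:swap}, the ``fixes one component $\Rightarrow$ fixes all'' dichotomy, which is the real content of the theorem. You deduce $f\in\Aut_*(X)$ from $f[C]=C$ by producing $h\in\Aut_*(X)$ with $h(a)=f(a)$, citing Proposition~\ref{thm:invariant} for ``$C$ is $1$-homogeneous witnessed by $\im\rho_C$''. But the witnessing clause of Proposition~\ref{thm:invariant}(4) is not something you can lean on here: its proof (via Proposition~\ref{thm:ultrahom_subspace} and quasi-invariance) only produces automorphisms of $X$ that stabilize $C$ \emph{setwise}, not elements of $\Aut_*(X)$, and for a general invariant decomposition the stronger statement is simply false. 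For instance, take the $9$-cycle $C_9$ with the invariant decomposition into residue classes mod $3$: the component is an equilateral triangle, but $\im(\rho_C)$ consists only of its rotations, so already $2$-homogeneity of $C$ is not witnessed by $\im(\rho_C)$; one can similarly build a $6$-point $1$-homogeneous space (two triangles joined by three pairwise distinct perfect matchings, i.e.\ $S_3$ with a generic invariant metric) with an invariant decomposition into pairs for which $\Aut_*(X)$ is trivial, so even $1$-homogeneity of a component fails to be witnessed by $\im(\rho_C)$. For the isosceles-generated decomposition the transitivity of $\Aut_*(X)$ on a component is indeed true, but it is a \emph{consequence} of the all-or-none dichotomy you are trying to prove (this is exactly how the paper uses it later, in Proposition~\ref{thm:two_components}), so as written your argument is circular.

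What is missing is the paper's three-component argument, which proves the dichotomy directly: if $y:=f(x)\nsim x$, then $d(x,y)$ is singleton, so $f(y)=x$ and $f$ swaps $C_x,C_y$; for $z$ in any third component, the three distances $d(x,y),d(y,z),d(z,x)$ are singleton and pairwise distinct, and since singleton distances determine points uniquely one computes $d(z,f(z))=d(x,y)$, so the unique automorphism sending $z\mapsto x$ sends $f(z)\mapsto y$, whence $z\sim f(z)$ would force $x\sim y$ — so every component moves. (An equivalent patch in your direction: if $f[C]=C$ with $f(a)=b\neq a$ while $f$ swaps $C'\ni z$ with $C''\ni f(z)$, then the unique automorphism $g$ with $g(a)=z$ satisfies $g(z)=a$ and $g(f(z))=b$ by singleton-distance bookkeeping, so $g$ maps the two distinct components $C',C''$ both onto $C$, a contradiction.) Once \ref{itm:swap} is repaired this way, the rest of your write-up goes through unchanged; your regular-action/exponent-two reformulation of \ref{itm:Boolean} is a mild repackaging of the paper's choice of two swaps $f_1,f_2$ and buys nothing essentially new.
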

\begin{proof} \hfill
	\begin{enumerate}
		\item
			Clearly, a distance $d(x, y)$ is non-singleton from the point of view of $x$ if and only if it is non-singleton from the point of view of $y$ as the space $X$ is $1$-homogeneous.
			Also then, $d(f(x), f(y))$ is non-singleton for every $f \in \Aut(X)$.
			Hence, the generating symmetric relation and so its reflexive transitive closure is preserved by automorphisms, and we have an invariant decomposition.
			The rest follows from Proposition~\ref{thm:invariant}.
		\item
			Let $f \in \Aut(X)$ such that $f(x) = y$.
			For any $x'$ from a different component than $x$, we have that $d(x, x')$ is a singleton distance, and so $f(x')$ is the unique point $y'$ such that $d(y, y') = d(x, x')$.
			For every $x'' \sim x$ we have that $x''$ is from a different component than $x'$, and we can apply the same argument for $x', x''$.
		\item
			Suppose $y := f(x) \nsim x$ for some $x \in X$.
			Since $d(x, y) = d(y, f(y))$ is a singleton distance, we have $f(y) = x$.
			Hence, $f$ swaps the components $C_x$ and $C_y$ of $x$ and $y$.
			We will show that $f[C] \neq C$ for every component $C$, and $f \circ f = \id$ will follow as we can repeat the above argument for any point $x \in X$.
			
			Let $C_z \ni z$ be a component different from $C_x$ and $C_y$.
			The distances $\tuple{a, b, c} := \tuple{d(x, y), d(y, z), d(z, x)}$ are singleton, and hence pairwise different.
			Moreover, by $1$-homogeneity for every point $w \in X$ and all distinct distances $\set{i, j} \subseteq \set{a, b, c}$ there are unique points $w_i, w_j$ with $d(w, w_i) = i$ and $d(w, w_j) = j$, and necessarily $d(w_i, w_j) = k$ where $\set{i, j, k} = \set{a, b, c}$.
			By applying this to $w = y$, we have $d(y, z) = b$, $d(y, f(z)) = d(f(x), f(z)) = d(x, z) = c$, and so $d(z, f(z)) = a$.
			Hence, the unique automorphism $g \in \Aut(X)$ mapping $z \mapsto x$ also maps $f(z) \mapsto y$.
			It follows that $z \nsim f(z)$ since otherwise we would have $x \sim y$ as $g$ preserves the equivalence $\sim$.
		\item
			The map $\rho_C$ is injective by \ref{itm:unique} if there are at least two components and trivially if there is only one component.
		\item We have $h \circ f \in \Aut(X) \setminus \Aut_*(X)$, and so $(h \circ f)^2 = \id$ by \ref{itm:swap}. On the other hand, $f^2 = \id$ and $(h \circ f)^2 = h \circ f \circ h \circ f = h \circ (f \circ h \circ f^{-1})$.
			Together, this shows $h \circ (f \circ h \circ f^{-1}) = \id$, and so $f \circ h \circ f^{-1} = h^{-1}$.
		\item We already know that $f^2 = \id$ for $f \in \Aut(X) \setminus \Aut_*(X)$.
			Let $h \in \Aut_*(X)$.
			Suppose $C_0, C_1, C_2$ are distinct components of $X$.
			There are $f_1, f_2 \in \Aut(X)$ swapping $C_0$ with $C_1$ and $C_0$ with $C_2$, respectively.
			Hence, $f_2 \circ f_1$ moves $C_1$ onto $C_2$, and so is not a member of $\Aut_*(X)$.
			On one hand, $(f_2 \circ f_1) \circ h \circ (f_2 \circ f_1)^{-1} = h^{-1}$.
			On the other hand, $(f_2 \circ f_1) \circ h \circ (f_2 \circ f_1)^{-1} = f_2 \circ (f_1 \circ h \circ f_1^{-1}) \circ f_2^{-1} = (h^{-1})^{-1} = h$.
			Hence, $h^{-1} = h$ and $h^2 = \id$.
		\item Let $h, k \in \Aut_*(X)$. Since $\card{X/{\sim}} \geq 2$, there is an $f \in \Aut(X) \setminus \Aut_*(X)$.
			We have $h \circ f \notin \Aut_*(X)$.
			On one hand, $(h \circ f) \circ k \circ (h \circ f)^{-1} = k^{-1}$.
			On the other hand, $(h \circ f) \circ k \circ (h \circ f)^{-1} = h \circ (f \circ k \circ f^{-1}) \circ h^{-1} = h \circ k^{-1} \circ h^{-1}$.
			Together, $h \circ k^{-1} \circ h^{-1} = k^{-1}$ and $k \circ h = h \circ k$.
		\qedhere
	\end{enumerate}
\end{proof}

\begin{definition}
	We say that a metric space is \emph{isosceles-generated} if its decomposition into isosceles-generated components has at most one component.
\end{definition}

\begin{example} \label{Dn}
  Given $n \in \NN_+$, we define the space $D_n$ as follows. We take two copies of the cyclic space $C_n$ from Example~\ref{circle-graph} and define the distances in $D_n = C_n \times 2$ between the copies as follows:
  $d(\tuple{i, 0}, \tuple{j, 1}) := q_{j - i}$ for every $i, j \in C_n$, where $q_k$, $k \in \ZZ_n$, are distinct distances not in $\Dist(C_n) = \set{0, \ldots, \floor{n/2}}$ suitable for the triangle inequality, i.e. $0 < \abs{q_i - q_j} \leq 1$ and $q_i > \floor{n/2}$ for every $i, j \in \ZZ_n$.
  We choose e.g. $q_k := \floor{n/2} + 1 + k/n$ for $k \in \ZZ_n$.
	
	The space $D_n$ is $1$-homogeneous:
  We know that $\Aut(C_n)$ consists of rotations $\Phi_k\maps i \mapsto i + k$ and reflections $\Psi_k\maps i \mapsto k - i$ for $k \in \ZZ_n$.
  Let $\sigma\maps 2 \to 2$ denote the unique transposition.
  For every $k \in \ZZ_n$ the maps $\Phi_k \times \id$ and $\Psi_k \times \sigma$ are automorphisms of $D_n$, as the following computations show for every $i, j, k \in \ZZ_n$, $x \in 2$, $f \in \Aut(C_n)$, $g \in \Aut(2)$:
  \begin{align*}
    d\bigl(\tuple{f(i), g(x)}, \tuple{f(j), g(x)}\bigr) 
    &= d_{C_n}(f(i), f(j)) = d_{C_n}(i, j) = d(\tuple{i, x}, \tuple{j, x}), \\
    d\bigl(\tuple{\Phi_k(i), \id(0)}, \tuple{\Phi_k(j), \id(1)}\bigr) 
    &= q_{\Phi_k(j) - \Phi_k(i)} = q_{(k + j) - (k + i)} = q_{j - i} = d(\tuple{i, 0}, \tuple{j, 1}), \\
    d\bigl(\tuple{\Psi_k(i), \sigma(0)}, \tuple{\Psi_k(j), \sigma(1)}\bigr) 
    &= q_{\Psi_k(i) - \Psi_k(j)} = q_{(k - i) - (k - j)} = q_{j - i} = d(\tuple{i, 0}, \tuple{j, 1}).
  \end{align*}
	
	We consider the decomposition into isosceles-generated components.
	Since in $C_n$ every point has the same distance to both its ``neighbours'', the space $C_n$ is isosceles-generated.
	As $D_n$ retains the distances within the copies of $C_n$ and as the distances $q_k$ are singleton, the isosceles-generated components of $D_n$ are the sets $C_n\times \set{0}$ and $C_n\times \set{1}$. Hence $D_n$ consists of two isosceles-generated components, and so is uniquely $1$-homogeneous.
	
	We see that $\Aut_*(D_n) = \set{\Phi_k \times \id: k \in \ZZ_n}$, and so the natural map $\rho\maps \Aut_*(D_n) \to \Aut(C_n)$ is not surjective.
	Also for $n \geq 3$, $\ZZ_n \leq \Aut(D_n)$ is not a Boolean group, and so $D_n$ is not a Boolean metric space.
\end{example}

\begin{remark}
	The two extreme cases of the decomposition of a $1$-homogeneous space into isosceles-generated components are as follows.
	The components are singletons if and only if the space is isosceles-free, and if there is only one component, the space is isosceles-generated.
	Note that for a $2$-homogeneous metric space (so both decompositions make sense) the situation is always extreme, see Theorem~\ref{thm:homogeneous_cases}.	
\end{remark}

Next we consider the question whether for the decomposition into isosceles-generated components the normal subgroup $\Aut_*(X) \leq \Aut(X)$ induces a decomposition of $\Aut(X)$ into a semi-direct product.
In the following we show that it is indeed the case, and moreover we endow the set of components $X/{\sim}$ with a metric turning it into a homogeneous isosceles-free space.

\begin{theorem} \label{thm:isosceles-free-quotient}
	Let $X$ be a $1$-homogeneous space, and let $X/{\sim}$ be its decomposition into isosceles-generated components.
	For $r, q \in \Dist(X)$ let us put $r \sim q$ if there are points $x, x_r, x_q \in X$ such that $d(x, x_r) = r$, $d(x, x_q) = q$, and $x_r \sim x_q$.
	For every $R \in \Dist(X)/{\sim}$ let us pick a distance $q_R \in [0, \infty)$ such that $q_{0/{\sim}} = 0$, $q_R \in [a, 2a]$ for every $R \neq 0/{\sim}$ and fixed $a > 0$, and such that $R \mapsto q_R$ is injective.
	For all components $C, C' \in X/{\sim}$ we put $d(C, C') := q_R$ where $d(x, y) \in R$ for any $x \in C$ and $y \in C'$.
	\begin{enumerate}
		\item \label{itm:dist_eq}
			The relation $\sim$ is a well-defined equivalence on $\Dist(X)$, and for every $x, y \in X$ we have $x \sim y$ if and only if $d(x, y) \sim 0$.
		\item \label{itm:homog_iso-free}
			We can always choose the distances $q_R$ as described, and the induced map $d$ is a well-defined metric on $X/{\sim}$ turning it into a homogeneous isosceles-free space.
	\end{enumerate}
	For every $f \in \Aut(X)$ let $\tilde{f}$ denote the induced bijection on $X/{\sim}$.
	\begin{enumerate}[resume]
		\item \label{itm:comp_auto}
			The map $Q\maps f \mapsto \tilde{f}$ is a surjective homomorphism $\Aut(X) \to \Aut(X/{\sim})$ inducing an isomorphism $\Aut(X)/{\Aut_*(X)} \to \Aut(X/{\sim})$.
	\end{enumerate}
	By a \emph{section} we mean a group homomorphism $S\maps \Aut(X/{\sim}) \to \Aut(X)$  (necessarily an embedding) such that $Q \circ S = \id_{\Aut(X/{\sim})}$, i.e. a section selects a representative $f \in \Aut(X)$ for every $\tilde{f} \in \Aut(X/{\sim})$ in a coherent way.
	\begin{enumerate}[resume]
		\item For every section $S$ we have that $\Aut(X)$ is the semidirect product $\Aut_*(X) \rtimes \im(S)$.
		\item For every $\ZZ_2$-linear base $B \subseteq \Aut(X/{\sim})$ and every map $\beta\maps B \to \Aut(X)$ choosing a representative $\beta(f) \in \Aut(X)$ with $Q(\beta(f)) = f$, there is a unique group embedding $S_\beta\maps \Aut(X/{\sim}) \to \Aut(X)$ extending $\beta$, which is necessarily a section.
		Hence, $\Aut(X) \cong \Aut_*(X) \rtimes \Aut(X/{\sim})$.
	\end{enumerate}
      \end{theorem}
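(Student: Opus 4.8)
The plan is to establish the five items in turn, leaning on the structure of the isosceles-generated decomposition from Theorem~\ref{thm:iso-generated} and on the theory of homogeneous isosceles-free spaces.

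\textbf{Item~\ref{itm:dist_eq}.} The key observation is that, by $1$-homogeneity, a distance $r \in \Dist(X)$ is non-singleton either from every point or from none (cf.\ Definition~\ref{def:singleton}); consequently the generating relation of $\sim$ on $X$ is symmetric and equals ``$x \neq y$ and $d(x, y) \notin S_X$''. Fix a base point $x_0$ and consider the surjective map $D_{x_0}\maps X \to \Dist(X)$ (surjective by Observation~\ref{thm:evaluation}). I would then show: (a) two points of $X$ at equal distance from $x_0$ are $\sim$-related in $X$ (such a distance is non-singleton), so the sets $D_{x_0}[C]$, $C \in X/{\sim}$, are pairwise disjoint; and (b) $r \sim q$ holds precisely when $r, q$ lie in a common $D_{x_0}[C]$, both implications obtained by transporting a witness onto $x_0$ by $1$-homogeneity. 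Since $\set{D_{x_0}[C] : C \in X/{\sim}}$ is then a partition of $\Dist(X)$, the relation $\sim$ is an equivalence (transitivity in particular), and $0/{\sim} = D_{x_0}[C_{x_0}]$. Finally, $x \sim y$ in $X$ $\iff$ $y$ lies in the component of $x$ $\iff$ $d(x, y) \in D_x[C_x] = 0/{\sim}$, where the nontrivial implication splits into the case $d(x, y) \notin S_X$ (then $x, y$ are directly generating-related) and $d(x, y) \in S_X$ (then the witnessing point is forced to be $y$).

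\textbf{Item~\ref{itm:homog_iso-free}.} The $q_R$ can be chosen since $\Dist(X)/{\sim}$ injects into $\Dist(X) \subseteq [0, \infty)$, so there are at most continuum many classes and the nonzero ones embed injectively into $[a, 2a]$. Well-definedness of $d$ on $X/{\sim}$ amounts to $d(x, y) \sim d(x', y')$ whenever $x \sim x'$ and $y \sim y'$, which follows from Item~\ref{itm:dist_eq} applied to each coordinate. The metric axioms are then immediate: $d(C, C') = 0 \iff C = C'$ because $q$ is injective and $q_R = 0 \iff R = 0/{\sim}$; symmetry is clear; and the triangle inequality is automatic because three pairwise distinct components yield three distances lying in $[a, 2a]$. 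Isosceles-freeness follows from injectivity of $q$ together with the disjointness in Item~\ref{itm:dist_eq}: $d(C, C') = d(C, C'')$ forces the two distances from a fixed point of $C$ to lie in one class, hence to be realised inside one component, so $C' = C''$. For $1$-homogeneity I would check that each $f \in \Aut(X)$ induces an isometry $\tilde f$ of $(X/{\sim}, d)$ — distances are computed from representatives, which $f$ preserves — and then realise a given pair $C, C'$ by an $f \in \Aut(X)$ carrying a point of $C$ to a point of $C'$. Since a $1$-homogeneous isosceles-free space is homogeneous isosceles-free by Proposition~\ref{ultra-free}, we are done.

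\textbf{Item~\ref{itm:comp_auto} and the semidirect product for a section.} That $Q\maps f \mapsto \tilde f$ is a group homomorphism into $\Aut(X/{\sim})$ is formal, and $\ker Q = \Aut_*(X)$ by definition of $\Aut_*$. The only point with content is surjectivity: given $g \in \Aut(X/{\sim})$, fix $C_0 \in X/{\sim}$ with a point $x_0 \in C_0$, pick $x_1 \in g(C_0)$, and choose $f \in \Aut(X)$ with $f(x_0) = x_1$ by $1$-homogeneity of $X$; then $\tilde f \in \Aut(X/{\sim})$ sends $C_0$ to $g(C_0)$, so $\tilde f = g$ since $X/{\sim}$ is uniquely $1$-homogeneous by Corollary~\ref{unique-automorphism}. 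The isomorphism $\Aut(X)/\Aut_*(X) \cong \Aut(X/{\sim})$ is then the first isomorphism theorem. For a section $S$: the short exact sequence $1 \to \Aut_*(X) \to \Aut(X) \xrightarrow{Q} \Aut(X/{\sim}) \to 1$ splits, and — with $\Aut_*(X)$ normal by Proposition~\ref{thm:invariant}, $\im(S) \cap \Aut_*(X) = \set{\id}$ since $S$ is a section, and $f \cdot S(Q(f))\inv \in \ker Q$ for every $f$ — this presents $\Aut(X) = \Aut_*(X) \rtimes \im(S)$ in the standard way.

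\textbf{Construction of $S_\beta$; the main obstacle.} Since $X/{\sim}$ is homogeneous isosceles-free, $\Aut(X/{\sim})$ is Boolean by Proposition~\ref{iso-free_boolean}, i.e.\ the $\ZZ_2$-vector space $\ZZ_2^{(B)}$. The crux — and the main obstacle, because $\Aut(X)$ itself need not be Boolean when $X/{\sim}$ has only two components (as the spaces $D_n$ show) — is that each $\beta(b)$ lies outside $\Aut_*(X)$ (it maps to $b \neq \id$ under $Q$) and hence squares to $\id$ by Theorem~\ref{thm:iso-generated}~\ref{itm:swap}; moreover, for $b \neq b' \in B$ the product $\beta(b)\beta(b')$ again maps to $bb' \neq \id$, so it too is an involution, and $(\beta(b)\beta(b'))^2 = \id$ combined with $\beta(b)^2 = \beta(b')^2 = \id$ forces $\beta(b)\beta(b') = \beta(b')\beta(b)$. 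Thus $\set{\beta(b) : b \in B}$ is a family of pairwise commuting involutions in $\Aut(X)$, so by the universal property of $\ZZ_2^{(B)}$ (presented by $B$ with relations $b^2 = 1$ and $bb' = b'b$) there is a unique homomorphism $S_\beta\maps \Aut(X/{\sim}) \to \Aut(X)$ with $S_\beta|_B = \beta$, namely $S_\beta$ of a finite sum $\sum_{b \in F} b$ is the product $\prod_{b \in F}\beta(b)$. It is a section because $Q \circ S_\beta$ is a homomorphism agreeing with $\id$ on the basis $B$, hence equal to $\id_{\Aut(X/{\sim})}$; being a section, $S_\beta$ is injective. Applying the previous item to $S = S_\beta$ and noting that $S_\beta$ is an isomorphism onto $\im(S_\beta)$ gives $\Aut(X) \cong \Aut_*(X) \rtimes \Aut(X/{\sim})$.
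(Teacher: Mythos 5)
Your proposal is correct and follows essentially the same route as the paper's proof: witnesses for the distance relation are transported by $1$-homogeneity, surjectivity of $Q$ comes from unique $1$-homogeneity of the isosceles-free quotient (Corollary~\ref{unique-automorphism}), and the section $S_\beta$ is built from the fact that automorphisms moving components are involutions (Theorem~\ref{thm:iso-generated}~\ref{itm:swap}). Your repackaging of items \ref{itm:dist_eq}–\ref{itm:homog_iso-free} via the basepoint partition $\Dist(X)=\bigcup_{C} D_{x_0}[C]$, and your explicit commutation step $(\beta(b)\circ\beta(b'))^2=\id$ together with $\beta(b)^2=\beta(b')^2=\id$ forcing $\beta(b)\circ\beta(b')=\beta(b')\circ\beta(b)$, are only cosmetic variations — the latter in fact spells out a point the paper's "the subgroup generated by $\im(\beta)$ is Boolean" treats tersely.
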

      
\begin{proof} \hfill
	\begin{enumerate}
		\item
			The relation $\sim$ on $\Dist(X)$ is clearly reflexive and symmetric.
			To show transitivity, suppose that a triple $\tuple{x, x_r, x_q}$ witnesses $r \sim q$ and that $\tuple{y, y_q, y_p}$ witnesses $q \sim p$.
			By $1$-homogeneity there is an automorphism $f$ such that $f(x) = y$.
			Since $d(y,f(x_q)) = d(x, x_q) = q = d(y, y_q)$, we have $f(x_r) \sim f(x_q) \sim y_q \sim y_p$, and so $r \sim p$.
			
			Finally, for every $x, y \in X$, if $x \sim y$, then $\tuple{x, x, y}$ witnesses that $0 = d(x, x) \sim d(x, y)$.
			On the other hand, if $d(x, y) \sim 0$, then there are $x' \sim y' \in X$ such that $d(x, y) = d(x', y')$.
			By $1$-homogeneity there is $f \in \Aut(X)$ such that $f(x) = x'$.
			We have $d(x', f(y)) = d(x, y) = d(x', y')$, and so $f(y) \sim y' \sim x' = f(x)$ and $x \sim y$.

		\item
			We can always pick the distances $q_R$ as described since $\card{\Dist(X/{\sim})} \leq \card{\Dist(X)} \leq \card{\RR} = \card{\set{0} \cup [a, 2a]}$ for any $a > 0$.
			
			The map $d$ is well-defined since for every $x, x' \in C$ and $y, y' \in C'$ we have $d(x, y) \sim d(x, y')$ as witnessed by $\tuple{x, y, y'}$ and $d(x, y') \sim d(x', y')$ as witnessed by $\tuple{y', x, x'}$.
			Clearly, $d$ is symmetric, and $d(C, C) = 0$ for every component $C$ since $q_{0/{\sim}} = 0$.
			Also if $d(C, C') = 0$, then for any $x \in C$ and $y \in C'$ we have $d(x, y) \sim 0$, and so $x \sim y$ by \ref{itm:dist_eq} and $C = C'$.
			Finally, $d$ trivially satisfies the triangle inequality since $\Dist(X/{\sim}) \subseteq \set{0} \cup [a, 2a]$.
			
			To show that the space $X/{\sim}$ is isosceles-free, suppose $d(C, C') = d(C, C'') > 0$.
			For any $x \in C$, $x' \in C'$, and $x'' \in C''$ we have $d(x, x') \sim d(x, x'')$, and so there is a witnessing triple $\tuple{y, y', y''}$.
			By $1$-homogeneity there is $f \in \Aut(X)$ such that $f(x) = y$.
			Since the distances $d(x, x')$ and $d(x, x'')$ are singleton, we have $f(x') = y'$ and $f(x'') = y''$, and so $y' \sim y''$ implies $x' \sim x''$ and $C' = C''$.
			
			$X/{\sim}$ is $1$-homogeneous since for all components $C \ni x$ and $C' \ni y$ there is $g \in \Aut(X)$ such that $g(x) = y$, and so $\tilde{g}(C) = C'$, where $\tilde{g}$ is the induced bijection on $X/{\sim}$.
			We have $\tilde{g} \in \Aut(X/{\sim})$ by \ref{itm:comp_auto}.
		\item
			For every $f \in \Aut(X)$ we have $\tilde{f} \in \Aut(X/{\sim})$ since for all points and components $x \in C$ and $y \in C'$ we have
			$d(\tilde{f}(C), \tilde{f}(C')) = q_{d(f(x), f(y))/{\sim}} = q_{d(x, y)/{\sim}} = d(C, C')$.
			Clearly, the assignment $f \mapsto \tilde{f}$ preserves composition and the identity, and so $Q\maps \Aut(X) \to \Aut(X/{\sim})$ is a group homomorphism.
			
			Let $g \in \Aut(X/{\sim})$ and let $x \in C \in X/{\sim}$.
			We take $x' \in g(C)$ and consider $f \in \Aut(X)$ such that $f(x) = x'$.
			We have $\tilde{f}(C) = \tilde{g}(C)$, and hence $\tilde{f} = \tilde{g}$ by Corollary~\ref{unique-automorphism} since $X/{\sim}$ is homogeneous isosceles-free by \ref{itm:homog_iso-free}. 
			Hence, $Q$ is a surjection.
			Finally, $\tilde{f} = \id_{X/{\sim}}$ if and only if $f$ setwise fixes all components, i.e. $f \in \Aut_*(X)$.
			Hence, $Q$ induces an isomorphism $\Aut(X)/{\Aut_*(X)} \to \Aut(X/{\sim})$.
		\item
			This follows from standard group-theoretic facts.
			We already know that $\Aut_*(X) \leq \Aut(X)$ is a normal subgroup.
			We have $\Aut_*(X) \cap \im(S) = \set{\id}$ since every $f \in \Aut(X)$ such that $\tilde{f} \neq \id_{X/{\sim}}$ moves components, and so $f \notin \Aut_*(X)$.
			We have $\Aut_*(X) \circ \im(S) = \Aut(X)$ since for every $f \in \Aut(X)$ and $g := S(\tilde{f}) \in \im(S)$ we have $f = (f \circ g^{-1}) \circ g$ and $f \circ g^{-1} \in \Aut_*(X)$ as $Q(f \circ g^{-1}) = Q(f) \circ Q(g)^{-1} = \tilde{f} \circ \tilde{f}^{-1} = \id$.
		\item
			By Theorem~\ref{thm:homog_iso-free} we know that $\Aut(X/{\sim})$ is indeed a $\ZZ_2$-linear space.
			The subgroup $H \leq \Aut(X)$ generated by $\im(\beta)$ is Boolean since every composition $g := \beta(f_1) \circ \cdots \circ \beta(f_n)$ for $f_i \in \Aut(X/{\sim})$, $i \leq n$, satisfies $g^2 = \id$.
			This is obvious for $n = 0$, and otherwise we have $Q(g) = f_1 \circ \cdots \circ f_n \neq \id$ since $B$ is linearly independent, and so $g \not\in \Aut_*(X)$.
			Hence, we have a map $\beta \maps B \to H$ into a $\ZZ_2$-linear space, which has a unique linear extension $S_\beta\maps \Aut(X/{\sim}) \to H$, which is the same thing as a group homomorphism extension $\Aut(X/{\sim}) \to \Aut(X)$.
			$S_\beta$ is necessarily a section since $Q \circ S_\beta = \id$ holds on $B$ and so on the subgroup generated by $B$.
		\qedhere
	\end{enumerate}
\end{proof}

From the previous theorem and from the structure of finite homogeneous isosceles-free spaces (Theorem~\ref{thm:homog_iso-free}) we obtain the following.
\begin{corollary}
	Let $X$ be a $1$-homogeneous space, and let $X/{\sim}$ be its decomposition into isosceles-generated components.
	We have that the normal subgroup $\Aut_*(X) \leq \Aut(X)$ is complemented, and so $\Aut(X)$ decomposes as a semidirect product $\Aut_*(X) \rtimes (\Aut(X)/\allowbreak\Aut_*(X))$.
	Moreover, if $X$ is finite and nonempty, then $\card{X/{\sim}} = 2^m$ for some $m \in \omega$.
\end{corollary}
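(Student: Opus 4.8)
The plan is to read everything off Theorem~\ref{thm:isosceles-free-quotient}, so the proof is essentially bookkeeping. By part~\ref{itm:homog_iso-free} of that theorem, $X/{\sim}$ carries a metric making it a homogeneous isosceles-free space, so by Proposition~\ref{iso-free_boolean} the group $\Aut(X/{\sim})$ is Boolean, hence a $\ZZ_2$-linear space; I fix a $\ZZ_2$-linear basis $B$ of it. Since the homomorphism $Q\maps \Aut(X) \to \Aut(X/{\sim})$ of part~\ref{itm:comp_auto} is surjective, I pick a preimage $\beta(f) \in \Aut(X)$ for each $f \in B$, obtaining a map $\beta\maps B \to \Aut(X)$ with $Q \circ \beta = \id_B$. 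The last item of Theorem~\ref{thm:isosceles-free-quotient} then provides a section $S_\beta\maps \Aut(X/{\sim}) \to \Aut(X)$ of $Q$, and, together with the fourth item (applied to the section $S_\beta$), the internal semidirect decomposition $\Aut(X) = \Aut_*(X) \rtimes \im(S_\beta)$. In particular $\im(S_\beta)$ is a complement of the normal subgroup $\Aut_*(X)$.

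To bring this to the stated form, I note that $Q$ carries $\im(S_\beta)$ isomorphically onto $\Aut(X/{\sim})$, while by part~\ref{itm:comp_auto} it also induces an isomorphism $\Aut(X)/\Aut_*(X) \cong \Aut(X/{\sim})$; identifying the acting factor via these two isomorphisms rewrites the decomposition as $\Aut(X) \cong \Aut_*(X) \rtimes (\Aut(X)/\Aut_*(X))$, with the quotient acting by conjugation in $\Aut(X)$. (If $\card{X/{\sim}} \leq 1$, then $\Aut_*(X) = \Aut(X)$ and everything is trivial, so the real content is the case $\card{X/{\sim}} \geq 2$.)

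Finally, if $X$ is finite and nonempty then $X/{\sim}$, being a quotient of $X$, is finite and nonempty, and by part~\ref{itm:homog_iso-free} it is homogeneous isosceles-free, so Corollary~\ref{thm:power_two} yields $\card{X/{\sim}} = 2^m$ for some $m \in \omega$. There is no genuine obstacle here; the one step deserving a word of care is the existence of the section, which rests on choosing a basis of the $\ZZ_2$-linear space $\Aut(X/{\sim})$ and lifting it along $Q$ — legitimate precisely because $X/{\sim}$ is homogeneous isosceles-free, hence Boolean — after which the conclusions are immediate from Theorem~\ref{thm:isosceles-free-quotient}.
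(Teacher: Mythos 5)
Your argument is correct and is exactly the assembly the paper intends: the corollary is derived directly from Theorem~\ref{thm:isosceles-free-quotient} (lifting a $\ZZ_2$-basis of the Boolean group $\Aut(X/{\sim})$ to a section of $Q$, giving the complement of $\Aut_*(X)$), with the power-of-two claim coming from $X/{\sim}$ being a finite nonempty homogeneous isosceles-free space. Your citation of Corollary~\ref{thm:power_two} instead of Theorem~\ref{thm:homog_iso-free} for that last point is an equivalent and perfectly adequate reference.
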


In the following we generalize the construction from Example~\ref{Dn} and show that every $1$-homogeneous space with exactly two isosceles-generated components arises this way.
\begin{construction}[Rainbow duplicate] \label{rainbow}
	Let $X$ be a $1$-homogeneous space and let $H \leq \Aut(X)$ be an Abelian subgroup such that for every $x, y \in X$ there is a unique element $h \in H$ (denoted by $h_x^y$) such that $h(x) = y$.
	We define the corresponding \emph{rainbow duplicate} of $X$ as the metric space $X \times_r 2$ with the distance
	\begin{align*}
		d(\tuple{x, 0}, \tuple{y, 0}) &= d(\tuple{x, 1}, \tuple{y, 1}) = d_X(x, y), \\
		d(\tuple{x, 0}, \tuple{y, 1}) &= r(h_x^y),
	\end{align*}
	where $r\maps H \to (0, \infty) \setminus \Dist(X)$ is an injective map such that triangle inequality in $X \times_r 2$ is satisfied.
	We also suppose there exists a map $g \in \Aut(X)$ such that $g \circ g = \id$ and $g \circ h \circ g^{-1} = h^{-1}$ for every $h \in H$.
	
	\begin{enumerate}
		\item \label{itm:rainbow_triangle}
			If $\abs{r(h) - r(h')} \leq \min(\Dist(X) \setminus \set{0})$ and $r(h) \geq \max(\Dist(X))$ for every $h, h' \in H$, then $d$ satisfies the triangle inequality.
		\item We have $\Dist(X \times_r 2) = \Dist(X) \cup \im(r)$, which is a disjoint union.
		\item \label{itm:rainbow_components}
			The decomposition of $X \times_r 2$ into isosceles-generated components refines $\set{X \times \set{0},\linebreak[1] X \times \set{1}}$, and we have equality if and only if $X$ is isosceles-generated.
		\item The map $e_H\maps h \mapsto h \times \id$ is a group embedding $H \to \Aut(X \times_r 2)$.
		\item The map $g \times \sigma$ (where $\sigma\maps 2 \to 2$ is the transposition) generates a copy of $\ZZ_2$ in $\Aut(X \times_r 2)$.
		\item \label{itm:rainbow_auto}
			The rainbow duplicate $X \times_r 2$ is $1$-homogeneous, and the maps above induce an isomorphism $H \rtimes \ZZ_2 \to \Aut(X \times_r 2)$.
	\end{enumerate}
	
	\begin{proof} \hfill
		\begin{enumerate}
			\item The two triangles to check are of the form $\tuple{x, 0}, \tuple{y, 0}, \tuple{z, 1}$ and $\tuple{x, 0}, \tuple{y, 1}, \tuple{z, 1}$ for $x, y, z \in X$, corresponding to the triangles of distances $d_X(x, y), r(h_x^z), r(h_y^z)$ and $r(h_x^y), r(h_x^z), d_X(y, z)$, respectively.
			\item This is clear.
			\item This is also clear.
			\item
				It is enough to show that $f \times \id$ preserves distances for every $f \in H$.
				We have
				\begin{align*}
					d(\tuple{f(x), i}, \tuple{f(y), i}) &= d_X(f(x), f(y)) = d_X(x, y) = d(\tuple{x, i}, \tuple{y, i}), \text{ for $i \in \set{0, 1}$}, \\
					d(\tuple{f(x), 0}, \tuple{f(y), 1}) &= r(h_{f(x)}^{f(y)}) = r(h_x^y) = d(\tuple{x, 0}, \tuple{y, 1}).
				\end{align*}
				The equality $h_{f(x)}^{f(y)} = h_x^y$ for every $x, y \in X$ means $h(f(x)) = f(h(x))$ for every $h \in H$ and $x \in X$ (by substituting $y = h(x)$ and $h = h_x^y$), and it is true since $f \in H$ and $H$ is Abelian.
			\item
				Clearly $(g \times \sigma)^2 = \id$.
				We need to show that $g \times \sigma$ preserves distances.
				We have
				\begin{align*}
					d(\tuple{g(x), \sigma(i)}, \tuple{g(y), \sigma(i)}) &= d_X(g(x), g(y)) = d_X(x, y) = d(\tuple{x, i}, \tuple{y, i}), \text{ $i \in \set{0, 1}$}, \\
					d(\tuple{g(x), 1}, \tuple{g(y), 0}) &= r(h_{g(y)}^{g(x)}) = r(h_x^y) = d(\tuple{x, 0}, \tuple{y, 1}).
				\end{align*}
				The equality $h_{g(y)}^{g(x)} = h_x^y$ for every $x, y \in X$ means $h(g(h(x))) = g(x)$ for every $h \in H$ and $x \in X$, which is equivalent to $g \circ h \circ g^{-1} = h^{-1}$ for every $h \in H$.
			\item
				The maps $h \times \id$ for $h \in H$ witness $1$-homogeneity for pairs of points in the same copy of $X$, while $g \times \sigma$ swaps the copies.
				Hence, the rainbow duplicate is $1$-homogeneous.
				Clearly, $\im(e_H) \cap \set{\id, g \times \sigma} = \set{\id}$ and $(g \times \sigma) \circ (h \times \id) \circ (g \times \sigma)^{-1} = (g \circ h \circ g^{-1}) \times \id = h^{-1} \times \id = (h \times \id)^{-1}$, and so we have an embedding $H \rtimes \ZZ_2 \to \Aut(X \times_r 2)$.
				The embedding is onto since the maps in the image already witness $1$-homogeneity of the rainbow duplicate, and by \ref{itm:rainbow_components} the decomposition into isosceles-generated components has at least two components, and by Theorem~\ref{thm:iso-generated}~\ref{itm:unique} the automorphisms witnessing $1$-homogeneity are unique.
			\qedhere
		\end{enumerate}
	\end{proof}
\end{construction}

\begin{remark}\label{g-is-reflection}
	The spaces $D_n$ from Example~\ref{Dn} are rainbow duplicates: $D_n = C_n \times_r 2$ for $H$ the group of all rotations $\set{\Phi_k: k \in \ZZ_n} \leq \Aut(C_n)$, $g$ any reflection $\Psi_k$, and $r\maps H \to (0, \infty)$ defined by $r(\Phi_k) = q_k$.
\end{remark}

\begin{proposition}
	Let $X \times_r 2$ be a rainbow duplicate for some $r\maps H \to (0, \infty)$.
	\begin{enumerate}
		\item $X \times_r 2$ is uniquely $1$-homogeneous. \label{itm:rainbow_unique}
		\item $X \times_r 2$ is a Boolean metric space if and only if $H$ is a Boolean group. \label{itm:rainbow_Boolean}
		\item An iterated rainbow duplicate $(X \times_r 2) \times_{r'} 2$ for some $r'\maps H' \to (0, \infty)$ can be formed only if the first duplicate $X \times_r 2$ is Boolean.
		In this case, necessarily $H' = \Aut(X \times_r 2)$, and $(X \times_r 2) \times_{r'} 2$ is Boolean as well.
	\end{enumerate}
	
	\begin{proof} \hfill
		\begin{enumerate}
			\item Follows from Construction~\ref{rainbow}~\ref{itm:rainbow_components} and Theorem~\ref{thm:iso-generated}~\ref{itm:unique}.
			\item By Construction~\ref{rainbow}~\ref{itm:rainbow_auto} we have an isomorphism $H \rtimes \ZZ_2 \to \Aut(X \times_r 2)$ where $\ZZ_2$ acts on $H$ by taking the inverse.
				Hence, if $\Aut(X \times_r 2)$ is Boolean, then $H$ is Boolean since it is isomorphic to its subgroup.
				On the other hand, $H$ is Boolean, then the inverse on $H$ is trivial, and so $H \rtimes \ZZ_2$ is a direct product and a Boolean group.
			\item
				Necessarily $H' = \Aut(X \times_r 2)$ since $X \times_r 2$ is uniquely $1$-homogeneous by \ref{itm:rainbow_unique}, and so $\Aut(X \times_r 2)$ is Abelian and $X \times_r 2$ is a Boolean metric space by Corollary~\ref{cor:Boolean}.
				$(X \times_r 2) \times_{r'} 2$ is then Boolean by \ref{itm:rainbow_Boolean}.
			\qedhere
		\end{enumerate}
	\end{proof}
\end{proposition}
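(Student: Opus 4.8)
The plan is to read everything off from the structural facts about rainbow duplicates already assembled in Construction~\ref{rainbow}, together with the decomposition theory of Section~\ref{sec:decomp} and the characterisation of Boolean metric spaces via Abelian automorphism groups (Corollary~\ref{cor:Boolean}). For part~\ref{itm:rainbow_unique}, I would first recall from Construction~\ref{rainbow}~\ref{itm:rainbow_components} that the decomposition of $X \times_r 2$ into isosceles-generated components refines the partition $\set{X \times \set{0}, X \times \set{1}}$; since the two blocks are distinct (for nonempty $X$), the decomposition has at least two components, and Theorem~\ref{thm:iso-generated}~\ref{itm:unique} then yields unique $1$-homogeneity of $X \times_r 2$ at once.

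For part~\ref{itm:rainbow_Boolean}, I would start from the isomorphism $H \rtimes \ZZ_2 \cong \Aut(X \times_r 2)$ of Construction~\ref{rainbow}~\ref{itm:rainbow_auto}, observing that the $\ZZ_2$-action there is inversion on $H$, since $g \times \sigma$ conjugates $h \times \id$ to $h^{-1} \times \id$. If $\Aut(X \times_r 2)$ is Boolean, then so is every subgroup, in particular the image $\im(e_H) \cong H$. Conversely, if $H$ is Boolean the inversion action is trivial, so the semidirect product degenerates to the direct product $H \times \ZZ_2$ of two Boolean groups and is therefore Boolean; as $X \times_r 2$ is $1$-homogeneous by Construction~\ref{rainbow}~\ref{itm:rainbow_auto}, it is then a Boolean metric space by definition.

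For part~(3), the key point is that forming $(X \times_r 2) \times_{r'} 2$ presupposes a subgroup $H' \leq \Aut(X \times_r 2)$ that is Abelian and acts on $X \times_r 2$ both freely and transitively, this being the content of the hypothesis on the ``base group'' in Construction~\ref{rainbow}. By part~\ref{itm:rainbow_unique}, $\Aut(X \times_r 2)$ itself already acts freely and transitively, so for any basepoint $a$ the evaluation map $E_a$ restricts to a bijection onto $X \times_r 2$ both from $\Aut(X \times_r 2)$ and from $H'$; since $E_a$ is globally injective and $H' \subseteq \Aut(X \times_r 2)$, this forces $H' = \Aut(X \times_r 2)$. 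In particular $\Aut(X \times_r 2)$ is Abelian, hence Boolean by Corollary~\ref{cor:Boolean}, so $X \times_r 2$ is a Boolean metric space. Finally, the ``base group'' of the second duplicate is $H' = \Aut(X \times_r 2)$, which is now Boolean, so part~\ref{itm:rainbow_Boolean} applied to $(X \times_r 2) \times_{r'} 2$ shows that this iterated duplicate is Boolean as well.

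I expect the only place that needs genuine care is part~(3): one must check that the axioms of the rainbow-duplicate construction really pin $H'$ down to the entire automorphism group rather than merely to some ``large'' subgroup. The cardinality-free argument through the evaluation map $E_a$ — which works precisely because the unique $1$-homogeneity obtained in part~\ref{itm:rainbow_unique} makes the $\Aut(X \times_r 2)$-action free — is what makes this identification uniform, covering the infinite case without any counting.
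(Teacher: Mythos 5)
Your proposal is correct and follows essentially the same route as the paper: part (1) via Construction~\ref{rainbow}~\ref{itm:rainbow_components} and Theorem~\ref{thm:iso-generated}~\ref{itm:unique}, part (2) via the semidirect-product description of $\Aut(X \times_r 2)$ with $\ZZ_2$ acting by inversion, and part (3) by using unique $1$-homogeneity to force $H' = \Aut(X \times_r 2)$ and then invoking Corollary~\ref{cor:Boolean} and part (2). Your evaluation-map argument in part (3) merely spells out the same identification the paper makes more tersely, so there is no substantive difference.
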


\begin{remark}
	It is not hard to see that every finite homogeneous isosceles-free space can be obtained as an iterated rainbow duplicate of a singleton space.
\end{remark}

\begin{remark}\label{admissible-map-exists}
	Let $X$ be a $1$-homogeneous metric space.
	Every Boolean subgroup $H \leq \Aut(X)$ such that for every $x, y \in X$ there is a unique $h \in H$ with $h(x) = y$ is admissible for the rainbow duplicate construction since for any $g, h \in H$ we have $g \circ h \circ g\inv = g \circ g\inv \circ h = h = h\inv$.
	
	Also, if $X$ is finite, then for any admissible subgroup $H$ there is an admissible map $r\maps H \to (0, \infty)$ by Construction~\ref{rainbow}~\ref{itm:rainbow_triangle}.
\end{remark}

\begin{example} \label{ex:boolean_rainbow}
	Let $Y_n$ be a copy of the homogeneous isosceles-free space of size $2^n$ from Example~\ref{ex:power-of-two}, but endowed with the discrete metric $d(x, y) = 1$ for $x \neq y$.
	Then for $H = \Aut(X_n)$ and any injective $r\maps H \to (1, 2)$ we have a Boolean rainbow duplicate $Y_n \times_r 2$ that is not isosceles-free (for $n \geq 2$).
\end{example}

\begin{proposition} \label{thm:two_components}
	Let $Y$ be a $1$-homogeneous metric space with exactly $2$ isosceles-ge\-ne\-ra\-ted components $X, X'$.
	Then $Y$ is isometric to a rainbow duplicate of $X$.
	More precisely, we have the following.
	\begin{enumerate}
		\item \label{itm:q_swap}
			For every $q \in \set{d(x, x'): x \in X, x' \in X'} = \Dist(Y) \setminus \Dist(X)$ there is a unique map $f_q\maps Y \to Y$ such that $d(y, f_q(y)) = q$.
			We have $f_q \circ f_q = \id_Y$, $f_q$ swaps the components $X, X'$, the restriction $f_q\maps X \to X'$ is an isometry, and $f_q \circ g = g \circ f_q$ for every $g \in \Aut(Y)$.
		\item \label{itm:identify}
			The map $\phi\maps \tuple{x, i} \mapsto f_q^i(x)$ is a bijection $X \times 2 \to Y$ such that the metric $d$ on $X \times 2$ turning $\phi$ into an isometry satisfies $d(\tuple{x, i}, \tuple{y, i}) = d_X(x, y)$ for every $x, y \in X$ and $i \in \set{0, 1}$.
		\item \label{itm:H}
			Let $H := \set{h|_X: h \in \Aut_*(Y)} \leq \Aut(X)$. We have that $h \mapsto h \times \id$ is an isomorphism $H \to \Aut_*(X \times 2) \cong_{\phi} \Aut_*(Y)$, $H$ is Abelian, and for every $x, y \in X$ there is unique $h \in H$ with $h(x) = y$.
		\item \label{itm:r}
			For every $h \in H$, the distance $r(h) := d(\tuple{x, 0}, \tuple{h(x), 1})$ does not depend on $x \in X$.
			This defines an injective map $r\maps H \to (0, \infty) \setminus \Dist(X)$.
		\item
			The group $H$ and the map $r$ are admissible parameters for the rainbow duplicate construction, and $\phi\maps X \times_r 2 \to Y$ is an isometric isomorphism.
	\end{enumerate}
	
	\begin{proof} \hfill
		\begin{enumerate}
			\item
				Since $q$ is a distance between the components, it is singleton, and so for every $y \in Y$ there is a unique point $y'$ with $d(y, y') = q$.
				Hence, $f_q$ is well-defined, and $f_q^2 = \id$.
				
				The distance $q$ cannot occur in a single component, i.e. we indeed have $\set{d(x, x'): x \in X, x' \in X'} = \Dist(Y) \setminus \Dist(X)$:
				let $x \in X$ and $x' \in X'$ with $d(x, x') = q$.
				If also $d(y, y') = q$ for some $y, y' \in X$, then the automorphism mapping $x$ to $y$ would have to also map $x'$ to $y'$ since $q$ is a singleton distance, but this is impossible since every automorphism induces a bijection between the components.
				Hence, $f_q$ swaps the components, and the restriction $X \to X'$ is well-defined.
				
				We need to show that $f_q\maps X \to X'$ is an isometry.
				Let $x, y \in X$.
				We have $d(x, f_q(x)) = d(y, f_q(y)) = q$.
				Let $q' := d(x, f_q(y))$.
				Since $x$ and $f_q(y)$ are from different components, $q'$ is a singleton distance.
				By $1$-homogeneity every triangle with side distances $q$ and $q'$ has the same distance $q''$ of the third side.
				We apply this observation to the triangles $\tuple{x, y, f_q(y)}$ and $\tuple{x, f_q(x), f_q(y)}$, and so we have $d(x, y) = q'' = d(f_q(x), f_q(y))$.
				
				We have $d(g(f_q(x)), g(x)) = d(f_q(x), x) = q = d(f_q(g(x)), g(x))$ for every $g \in \Aut(Y)$ and $x \in Y$, and so $g(f_q(x)) = f_q(g(x))$ since $q$ is a singleton distance.
				
			\item 
				The map $\phi$ is is clearly a bijection as it bijectively maps $X \times \set{0}$ to $X$ and $X \times \set{1}$ to $f_q[X] = X'$.
				The rest follows easily from \ref{itm:q_swap}:
				for each $x,y\in X$ and each $i\in\set{0,1}$, we have $d(\tuple{x, i}, \tuple{y, i}) = d_Y(f_q^i(x), f_q^i(y)) = d_X(x, y)$.
			
			\item
				Clearly $h \mapsto h|_X$ is an isomorphism $\Aut_*(Y) \to H$ by Theorem~\ref{thm:iso-generated}~\ref{itm:restr}.
				We just need to observe that $h|_X \times \id$ translates to $h$ via $\phi$, i.e. $\phi \circ (h|_X \times \id) = h \circ \phi$,
				which reduces to $f_q^i(h(x)) = h(f_q^i(x))$ for every $x \in X$ and $i \in \set{0, 1}$.
				But this is true by \ref{itm:q_swap}.
				
				$H$ is Abelian since $\Aut_*(Y)$ is Abelian by Theorem~\ref{thm:iso-generated}~\ref{itm:Abelian}.
				We know that for every $x, y \in X$ there is a unique $k \in \Aut_*(Y)$ with $k(x) = y$, and the restriction $k \mapsto k|_X$ is an isomorphism $\Aut_*(Y) \to H$, so there is a unique $h \in H$ with $h(x) = y$.
			
			\item
				We have $d(\tuple{x, 0}, \tuple{h(x), 1}) = d_Y(f_q^0(x), f_q^1(h(x))) = d_Y(x, f_q(h(x)))$.
				Let $x' \in X$ be another point, and let $k \in \Aut_*(Y)$ be the map such that $k(x) = x'$.
				Then $d_Y(x', f_q(h(x'))) = d_Y(k(x), f_q(h(k(x)))) = d_Y(k(x), k(f_q(h(x)))) = d_Y(x, f_q(h(x)))$ since $H$ is Abelian and $k \circ f_q = f_q \circ k$ by \ref{itm:q_swap}.
				
				We have $r(h) \notin \Dist(X)$ for every $h \in H$ by \ref{itm:q_swap} since $r(h)$ is a distance between the components, and $r$ is injective since for $h \neq h' \in H$, $\tuple{h(x), 1}$ and $\tuple{h'(x), 1}$ are distinct points in the other component.
			
			\item
				$H$ and $r$ are admissible by \ref{itm:H} and \ref{itm:r}.
				We compare the metrics on $X \times 2$ coming from the identification $\phi$ and from the rainbow duplicate construction.
				By \ref{itm:identify} the metrics agree on the components, and by \ref{itm:r} the distances agree between the components.
				
				It remains to show the existence of a suitable witnessing map $g \in \Aut(X)$ for the rainbow duplicate construction.
				Let $f \in \Aut(Y)$ be a map swapping the components and let $g := f_q \circ f|_X \in \Aut(X)$.
				We have $g^2 = (f_q \circ f)^2 |_X = (f_q^2 \circ f^2) |_X = \id_X$, and similarly
				$g \circ h \circ g^{-1} = ((f_q \circ f) \circ \tilde{h} \circ (f_q \circ f)^{-1}) |_X = (f_q^2 \circ f \circ \tilde{h} \circ f^{-1}) |_X = \tilde{h}^{-1}|_X = h^{-1}$ where $\tilde{h} \in \Aut(Y)$ is the extension of $h$.
			\qedhere
		\end{enumerate}
	\end{proof}
\end{proposition}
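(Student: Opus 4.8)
The plan is to reconstruct $Y$ as a rainbow duplicate out of a single involution $f_q$: once part~\ref{itm:q_swap} is established, the bijection $\phi$ and the parameters $H$, $r$ are essentially forced and the remaining parts become bookkeeping. Throughout I will use that, since there are exactly two isosceles-generated components, $Y$ is uniquely $1$-homogeneous and $\Aut_*(Y)$ is Abelian (Theorem~\ref{thm:iso-generated}), that every automorphism of $Y$ either fixes both components setwise or swaps them, and hence that an automorphism fixing a single point of $X$ belongs to $\Aut_*(Y)$.

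For part~\ref{itm:q_swap} the crucial observation is that every distance realised between $X$ and $X'$ is singleton: were $d(x,x')$ non-singleton for some $x\in X$, $x'\in X'$, then $x$ and $x'$ would already be related by the generating relation of the isosceles-generated decomposition, contradicting that they lie in distinct components; this also yields $\Dist(Y)\setminus\Dist(X)=\set{d(x,x'):x\in X,\ x'\in X'}$, disjoint from $\Dist(X)=\Dist(X')$. For such a $q$ I define $f_q(y)$ to be the unique point with $d(y,f_q(y))=q$, which exists and is unique since $q$ is singleton; then $f_q\circ f_q=\id_Y$ by comparing $d(y,f_q(y))$ with $d(f_q(y),y)$, $f_q$ swaps $X$ and $X'$ because $q$ is a cross-distance, and $f_q\circ g=g\circ f_q$ for $g\in\Aut(Y)$ because $g(f_q(y))$ is the unique point at distance $q$ from $g(y)$. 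The one genuine computation is that $f_q|_X\maps X\to X'$ is an isometry: for $x,y\in X$ put $q':=d(x,f_q(y))$, again a cross-distance and hence singleton; if $q=q'$ then $f_q(x)=f_q(y)$ since $q$ is singleton, whence $x=y$, while if $q\neq q'$ the standard consequence of $1$-homogeneity that two distinct singleton sides at a vertex determine the third side, applied to $\tuple{x,y,f_q(y)}$ at the vertex $f_q(y)$ and to $\tuple{x,f_q(x),f_q(y)}$ at the vertex $x$, yields $d(x,y)=d(f_q(x),f_q(y))$.

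Parts~\ref{itm:identify}--\ref{itm:r} are then transport of structure along $\phi(\tuple{x,i}):=f_q^i(x)$, a bijection $X\times 2\to Y$ sending the two slices onto $X$ and $X'$: pulling back $d_Y$ gives $d(\tuple{x,i},\tuple{y,i})=d_Y(f_q^i(x),f_q^i(y))=d_X(x,y)$ by the isometry property of $f_q$, which is~\ref{itm:identify}. Since $\phi$ respects the component structure, conjugation by $\phi$ identifies $\Aut_*(Y)$ with $\Aut_*(X\times 2)$, and $h\circ f_q=f_q\circ h$ shows that $h\in\Aut_*(Y)$ corresponds to $(h|_X)\times\id$; together with the restriction map $\Aut_*(Y)\to\Aut(X)$ being an embedding (Theorem~\ref{thm:iso-generated}~\ref{itm:restr}) this gives the isomorphisms of~\ref{itm:H}, while $H$ is Abelian because $\Aut_*(Y)$ is and acts simply transitively on $X$ because $Y$ is uniquely $1$-homogeneous and an automorphism fixing a point of $X$ lies in $\Aut_*(Y)$. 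For~\ref{itm:r}, independence of $r(h):=d_Y(x,f_q(h(x)))$ from $x$ follows by conjugating with the element of $\Aut_*(Y)$ that moves $x$ to another point of $X$ (using that $H$ is Abelian and commutes with $f_q$); it lands in $(0,\infty)\setminus\Dist(X)$ since $x$ and $f_q(h(x))$ lie in different components, and it is injective since $r(h)=r(h')$ with $h\neq h'$ would put two distinct points of $X'$ at the same singleton distance from $x$.

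For the last part I must supply the auxiliary involution demanded by Construction~\ref{rainbow} and check that its metric on $X\times 2$ agrees with the one pulled back by $\phi$. Taking any $f\in\Aut(Y)$ that swaps the components (it exists by $1$-homogeneity and satisfies $f\circ f=\id$ by Theorem~\ref{thm:iso-generated}~\ref{itm:swap}), I set $g:=(f_q\circ f)|_X$, a composite of the isometries $f|_X\maps X\to X'$ and $f_q|_{X'}\maps X'\to X$, hence an element of $\Aut(X)$; using $f\circ f_q=f_q\circ f$, $f\circ f=f_q\circ f_q=\id$ and the identity $f\circ h\circ f^{-1}=h^{-1}$ of Theorem~\ref{thm:iso-generated} one computes $g\circ g=\id_X$ and $g\circ h\circ g^{-1}=h^{-1}$ for $h\in H$, exhibiting $H$ and $r$ as admissible parameters. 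Finally, on each slice both metrics equal $d_X$, and across the slices the rainbow distance $r(h_x^y)$ equals $d_Y(x,f_q(h_x^y(x)))=d_Y(x,f_q(y))=d_Y(\phi(\tuple{x,0}),\phi(\tuple{y,1}))$ straight from the definition of $r$, so the rainbow metric, being the pull-back of a genuine metric, automatically satisfies the triangle inequality and $\phi\maps X\times_r 2\to Y$ is the desired isometric isomorphism. The only step that is metric geometry rather than group theory is the isometry property of $f_q|_X$ in part~\ref{itm:q_swap}, and making the ``two singleton sides determine the third side'' argument precise (including the degenerate case) is where I expect the real work to lie; everything else is careful but routine transport along $\phi$ together with the conjugation identities already provided by Theorem~\ref{thm:iso-generated}.
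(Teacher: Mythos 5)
Your proposal follows the paper's proof essentially step for step: the singleton cross-distance map $f_q$, the two-triangle argument (with the degenerate case $q=q'$ handled explicitly) showing $f_q|_X$ is an isometry, transport of structure along $\phi$, the appeal to Theorem~\ref{thm:iso-generated} for unique $1$-homogeneity, commutativity of $\Aut_*(Y)$ and the identity $f\circ h\circ f^{-1}=h^{-1}$, and the choice $g=f_q\circ f|_X$ for the witnessing involution. All of that is correct and is the same route the paper takes.

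The one place where you assert more than you justify is the claim that the cross-distances are exactly $\Dist(Y)\setminus\Dist(X)$, equivalently that no distance $d(x,x')$ with $x\in X$, $x'\in X'$ is also realized inside a component. Singleton-ness of the cross-distances (which you do prove correctly) does not by itself rule out that the same singleton distance $q$ occurs both as $d(x,x')$ with $x\in X$, $x'\in X'$ and as $d(y,y')$ with $y,y'\in X$; and without this disjointness your statement that ``$f_q$ swaps $X$ and $X'$ because $q$ is a cross-distance'' does not follow, nor does the later claim that $r(h)\notin\Dist(X)$. The missing argument is short and uses tools you already deploy elsewhere: by $1$-homogeneity take $g\in\Aut(Y)$ with $g(x)=y$; since $q$ is singleton, $g(x')$ would have to be $y'$, contradicting that automorphisms carry components onto components (this is the paper's argument). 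Equivalently, once $f_q$ is known to commute with every automorphism, transitivity of $\Aut(Y)$ on $X$ together with preservation of components forces $f_q[X]\subseteq X'$, hence $q\notin\Dist(X)$. With that one-line repair inserted, your proof is complete and coincides with the paper's.
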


We finish this section with a classification of $1$-homogeneous metric spaces.
\begin{theorem} \label{thm:homogeneous_cases}
	Let $X$ be a $1$-homogeneous metric space. Then one of the following is true.
	\begin{enumerate}
		\item $X$ is isosceles-generated.
		\item $X$ is a rainbow duplicate of an isosceles-generated space.
		\item $X$ is a Boolean metric space.
	\end{enumerate}
	Suppose that $X$ is even $2$-homogeneous. Then one of the following is true.
	\begin{enumerate}[label=\upshape (\arabic*')]
		\item $X$ is isosceles-generated.
		\item $X$ is isosceles-free.
	\end{enumerate}
	
	\begin{proof}
		By Proposition~\ref{thm:two_components}, if $X$ has exactly two isosceles-generated components, then it is a rainbow duplicate of an isosceles-generated space.
		By Proposition~\ref{thm:iso-generated}~\ref{itm:Boolean}, if $X$ has at least three isosceles-generated components, then it is a Boolean metric space.
		
		If $X$ is $2$-homogeneous, we can consider also the decomposition into isosceles-free components.
		For every two distinct isosceles-generated components $C, C'$ and points $x \in C$ and $x' \in C'$ we have that $d(x, x')$ is a singleton distance, and so $x, x'$ are in the same isosceles-free component. Since $x' \in C'$ was arbitrary, all elements of $C'$ are in the same isosceles-free component as $x$.
		Hence, $C'$ is isosceles-free, and so degenerate as an isosceles-generated component.
		Hence, if there are at least two isosceles-generated components, they are degenerate, and so the whole space is isosceles-free.
	\end{proof}
\end{theorem}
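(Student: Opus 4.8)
The plan is to run the entire argument through the decomposition of $X$ into isosceles-generated components supplied by Theorem~\ref{thm:iso-generated}, splitting into cases according to the number of components $\card{X/{\sim}}$.

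For the first (merely $1$-homogeneous) statement I would proceed as follows. If $\card{X/{\sim}} \le 1$, then $X$ is isosceles-generated by definition, which is case~(1). If $\card{X/{\sim}} = 2$, I would quote Proposition~\ref{thm:two_components}: $X$ is then isometric to a rainbow duplicate $Z \times_r 2$ of one of its two components $Z$; moreover Construction~\ref{rainbow}~\ref{itm:rainbow_components} says that the isosceles-generated decomposition of a rainbow duplicate $Z \times_r 2$ refines $\set{Z \times \set{0}, Z \times \set{1}}$ with equality precisely when $Z$ is isosceles-generated, and since $X$ has exactly two components this forces $Z$ to be isosceles-generated, giving case~(2). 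If $\card{X/{\sim}} \ge 3$, then Theorem~\ref{thm:iso-generated}~\ref{itm:Boolean} gives that $\Aut(X)$ is Boolean, so $X$ is a Boolean metric space, which is case~(3). As these cases exhaust all possibilities, the first statement follows.

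For the second statement I would show that, under the stronger hypothesis of $2$-homogeneity, cases~(2) and~(3) degenerate, so the trichotomy becomes the dichotomy ``isosceles-generated or isosceles-free''. Here I additionally invoke the decomposition of $X$ into isosceles-free components from Theorem~\ref{thm:isosceles-free-decomposition}, which is available precisely because $X$ is $2$-homogeneous. The crucial observation is that whenever $C \ne C'$ are distinct isosceles-generated components, $x \in C$, and $x' \in C'$, the distance $d(x, x')$ is singleton: otherwise the generating relation would already give $x \sim x'$, contradicting $C \ne C'$. Hence $d(x, x') \in S_X$, so $x$ and $x'$ lie in the same isosceles-free component; letting $x'$ range over $C'$ shows all of $C'$ sits inside one isosceles-free component, so $C'$ as a subspace is isosceles-free. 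But a space that is simultaneously isosceles-generated and isosceles-free has a trivial generating relation and hence at most one point, so $C'$ is a singleton. Applying this to every pair of distinct components shows that if $X$ has at least two isosceles-generated components, then all of them are singletons, which by the remark following Theorem~\ref{thm:iso-generated} means $X$ is isosceles-free; and if $X$ has at most one component, it is isosceles-generated.

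The proof contains no genuine obstacle — the real work sits in the earlier structural results — but two points require care: verifying that distances between distinct isosceles-generated components are always singleton (immediate from the definition of the generating relation together with $1$-homogeneity), and confirming that a space which is simultaneously isosceles-generated and isosceles-free must be a single point. Everything else is a matter of correctly assembling Theorem~\ref{thm:iso-generated}, Proposition~\ref{thm:two_components}, Construction~\ref{rainbow}, and Theorem~\ref{thm:isosceles-free-decomposition}.
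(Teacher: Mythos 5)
Your proposal is correct and follows essentially the same route as the paper: the trichotomy by the number of isosceles-generated components (definition, Proposition~\ref{thm:two_components}, Theorem~\ref{thm:iso-generated}~\ref{itm:Boolean}), and for the $2$-homogeneous case the observation that distances between distinct isosceles-generated components are singleton, forcing those components to lie in isosceles-free components and hence be singletons. The only difference is cosmetic: you justify explicitly, via Construction~\ref{rainbow}~\ref{itm:rainbow_components}, that the base of the rainbow duplicate is isosceles-generated, a point the paper leaves implicit.
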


\section{Maximal number of distances} \label{sec:distances}

We have seen that restricting distances of finite metric spaces by a coherent triangle scheme $t\maps R^2 \to R$ (Corollary~\ref{thm:isosceles-free_amalgamation_class_corollary}) leads to an isosceles-free Fraïssé limit $X$.
If the set of distances $R$ is finite, this forces $X$ to be finite as well, despite $X$ being the “largest” and “most complicated” structure associated to the given class of finite structures.
In fact, we get $\card{X} = \card{R}$.
This is in contrast with the situation when distances are restricted just to a given finite set satisfying the four-values condition, where the Fraïssé limit is clearly infinite.

In this section, instead of limiting the set of distances and asking about the cardinality of the Fraïssé limit, we start with a general homogeneous metric space of a given finite cardinality and ask how homogeneity limits the number of attained distances.
Namely, we consider the following question: 
\emph{What is the maximal number of different distances attained in a $k$-homogeneous metric space of cardinality $n \in \NN_+$?}
It turns out that spaces with the highest ratio of the number of attained distances to the number of points are somewhat close to being isosceles-free and that it is useful to view them through the lens of decompositions into isosceles-free and isosceles-generated components, developed in the previous section.

For every metric space $X$ let $\delta(X)$ denote the number of distinct distance values used in $X$, i.e. $\delta(X) := \card{\Dist(X)}$.
Moreover let 
\begin{align*}
	\Delta_k(n) &:= \max\set{\delta(X): \text{$X$ a $k$-homogeneous space with $\card{X} = n$}}, \text{for $k \in \NN_+$}, \\
	\Delta_\omega(n) &:= \max\set{\delta(X): \text{$X$ an ultrahomogeneous space with $\card{X} = n$}}.
\end{align*}
Clearly, we have $\Delta_\omega(n) \leq \cdots \leq \Delta_2(n) \leq \Delta_1(n)$ for every $n \in \NN_+$.

\begin{example} \label{circle-graph-2}
	The circle graph space~$C_n$ considered in Example~\ref{circle-graph} is ultrahomogeneous and $\delta(C_n) = \floor{\frac{n}{2}} + 1$.
	Hence, $\Delta_\omega(n) \geq \floor{\frac{n}{2}} + 1$.
\end{example}

Recall that by $S_X:=\{r\in[0,\infty)\colon \forall x\in X\, \exists!y\in X\colon d(x,y)=r\}\subseteq \Dist(X)$ we mean the set of singleton distances from Definition \ref{def:singleton}.

\begin{observation} \label{singleton-bound}
	We have $\delta(X) \leq (\card{S_X} + \card{X}) / 2$ for every finite $1$-homogeneous space $X$.
	This is because for any $x \in X$ the map $d(x, \cdot)\maps X \to \Dist(X)$ is a surjection where exactly the elements of $S_X$ have a unique preimage.
	Hence, $\card{X} \geq \card{S_X} + 2 (\delta(X) - \card{S_X})$.
\end{observation}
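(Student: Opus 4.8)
The plan is to fix any point $a \in X$ and to analyse the fibres of the distance map $D_a\maps X \to \Dist(X)$, $x \mapsto d(a, x)$, from Observation~\ref{thm:evaluation}. Since $X$ is the disjoint union of these fibres, we will have $\card{X} = \sum_{r \in \Dist(X)} \card{D_a\inv(r)}$, and it then suffices to bound each summand from below: the fibres over $S_X$ contribute $1$ each, and all remaining fibres contribute at least $2$ each, which forces $\card{X}$ to be large relative to $\delta(X)$.

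First I would record that $D_a$ is surjective onto $\Dist(X)$, which uses $1$-homogeneity: if $r = d(u, v) \in \Dist(X)$, pick $f \in \Aut(X)$ with $f(u) = a$, so that $d(a, f(v)) = r$ (this is exactly the implication noted in Observation~\ref{thm:evaluation}). Next I would estimate the fibre sizes. For $r \in S_X$ we have $\card{D_a\inv(r)} = 1$ directly from the definition of a singleton distance; in particular $0 \in S_X$, with $D_a\inv(0) = \set{a}$. For $r \in \Dist(X) \setminus S_X$ we have $r > 0$, and since $r$ fails to be singleton, some point of $X$ admits at least two points at distance $r$; transporting that point to $a$ by an automorphism (again $1$-homogeneity) gives $\card{D_a\inv(r)} \geq 2$. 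Summing,
\[
	\card{X} = \sum_{r \in S_X} \card{D_a\inv(r)} + \sum_{r \in \Dist(X) \setminus S_X} \card{D_a\inv(r)} \geq \card{S_X} + 2\bigl(\delta(X) - \card{S_X}\bigr),
\]
and rearranging yields $\delta(X) \leq (\card{S_X} + \card{X})/2$. (If $X = \emptyset$ the inequality $0 \leq 0$ is trivial.)

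There is no substantial obstacle here; the only points requiring a little care are that \emph{both} the surjectivity of $D_a$ and the lower bound $\card{D_a\inv(r)} \geq 2$ for non-singleton $r$ genuinely invoke $1$-homogeneity in order to move the relevant configuration to the fixed base point $a$, and that the count must place the zero distance on the singleton side of the ledger.
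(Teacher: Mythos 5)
Your proof is correct and follows the same route as the paper: fix a base point, use $1$-homogeneity to see that the distance map is surjective with fibres of size $1$ exactly over $S_X$ and of size at least $2$ over the remaining distances, and count. The extra care you take in checking that non-singleton distances give fibres of size at least $2$ at the fixed point (via transporting by an automorphism) is exactly what the paper leaves implicit.
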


\begin{proposition} \label{n-distances}
	Let $X$ be a finite $1$-homogeneous $n$-point space.
	Clearly, $\delta(X) \leq n$.
	We have $\delta(X) = n$ if and only if $X$ is isosceles-free, and in this case $X$ is ultrahomogeneous and $n$ is a power of two.
	
	In other words, $\Delta_\omega(n) = \Delta_1(n) = n$ if $n = 2^m$, and $\Delta_1(n) < n$ otherwise.
	
	\begin{proof}
		By Observation~\ref{thm:evaluation}, for every $a \in X$ the map $D_a\maps x \mapsto d(a, x)$, $X \to \Dist(X)$, is surjective, and $X$ is isosceles-free if and only if $D_a$ is injective, which happens if and only if $\delta(X) = n$.
		In this case, $X$ is ultrahomogeneous by Proposition~\ref{ultra-free}, and $n$ is a power of two by Corollary~\ref{thm:power_two}.
		A homogeneous isosceles-free space of cardinality $2^m$ exists by Example~\ref{ex:power-of-two}.
	\end{proof}
\end{proposition}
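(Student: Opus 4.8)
The plan is to obtain everything from the reformulations already collected in Observation~\ref{thm:evaluation}. First I would fix an arbitrary point $a \in X$ and consider the distance map $D_a\maps X \to \Dist(X)$, $x \mapsto d(a, x)$. By Observation~\ref{thm:evaluation}, the assumption that $X$ is $1$-homogeneous makes $D_a$ surjective, so $\delta(X) = \card{\Dist(X)} \leq \card{X} = n$, which is the trivial inequality. Since $X$ is finite, a surjection $D_a\maps X \to \Dist(X)$ is a bijection precisely when $\card{\Dist(X)} = n$, i.e.\ precisely when $D_a$ is injective; and $D_a$ being injective for all $a$ (equivalently, by $1$-homogeneity, for one $a$, since $D_b = D_a \circ f$ whenever $f \in \Aut(X)$ maps $b$ to $a$) is exactly the definition of $X$ being isosceles-free. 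This gives the equivalence $\delta(X) = n \Leftrightarrow X$ isosceles-free.

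For the remaining assertions I would simply invoke the structure already developed. If $\delta(X) = n$, then $X$ is $1$-homogeneous and isosceles-free, hence ultrahomogeneous by Proposition~\ref{ultra-free}; being a finite homogeneous isosceles-free space, its cardinality is a power of two by Corollary~\ref{thm:power_two} (which rests on $\Aut(X)$ being Boolean and the evaluation map $E_a$ being a bijection). Conversely, for every $m \in \omega$ the space $X_m$ of Example~\ref{ex:power-of-two} is a homogeneous isosceles-free space of size $2^m$, so $\delta(X_m) = 2^m$.

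Putting these together yields the reformulation in terms of $\Delta_k$: for $n = 2^m$ the space $X_m$ shows $\Delta_\omega(n) \geq n$, and combined with $\Delta_\omega(n) \leq \Delta_1(n) \leq n$ this pins all of them down to $n$; for $n$ not a power of two, no $n$-point $1$-homogeneous space can be isosceles-free, so every such space has $\delta(X) \leq n - 1$, giving $\Delta_1(n) < n$.

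There is no serious obstacle here — the proposition is essentially an accounting of results proved earlier in the paper. The only point requiring a moment's care is the equivalence ``$\delta(X) = n$ if and only if $X$ is isosceles-free'': one must notice that finiteness upgrades the already-available surjectivity of $D_a$ to a bijection exactly when the distance count is maximal, and then recognize injectivity of the maps $D_a$ as the very definition of isosceles-freeness (using $1$-homogeneity to move freely between ``for one point'' and ``for every point'').
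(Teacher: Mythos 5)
Your proposal is correct and follows essentially the same route as the paper: surjectivity of $D_a$ from $1$-homogeneity via Observation~\ref{thm:evaluation}, finiteness upgrading surjectivity to bijectivity exactly when $\delta(X)=n$, then Proposition~\ref{ultra-free}, Corollary~\ref{thm:power_two}, and Example~\ref{ex:power-of-two} for the remaining claims. The extra detail you give (moving between one point and all points, and the explicit $\Delta_1$/$\Delta_\omega$ accounting) is just a spelled-out version of what the paper leaves implicit.
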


\begin{theorem} \label{thm:number-of-distances}
	For a finite $2$-homogeneous metric space $X$ of $n := \card{X}$ elements, where $n = 2^m (2k + 1)$, we have $\delta(X) \leq 2^m(k+1) =: \beta_n$.
\end{theorem}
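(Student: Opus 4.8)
The plan is to combine the decomposition of $X$ into isosceles-free components with the counting estimate of Observation~\ref{singleton-bound}. Since $X$ is $2$-homogeneous, Theorem~\ref{thm:isosceles-free-decomposition} applies: the relation $x \sim y$ given by $d(x, y) \in S_X$ partitions $X$ into pairwise isometric homogeneous isosceles-free components. Write $p$ for their common cardinality and $\ell$ for their number, so that $n = \ell p$. By Corollary~\ref{thm:power_two} each component has cardinality a power of two, so $p = 2^j$ for some $j$; and since $p$ divides $n = 2^m(2k + 1)$ while $2k + 1$ is odd, we get $j \leq m$, i.e. $p \leq 2^m$.

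Next I would identify $S_X$ with the set of distances occurring inside a single component. Fix a component $C$ and a point $a \in C$. By the definition of $\sim$, a value $s$ lies in $S_X$ precisely when there is a unique $y \in X$ with $d(a, y) = s$, and then $a \sim y$, hence $y \in C$; conversely, every $d(a, y)$ with $y \in C$ lies in $S_X$. Thus $y \mapsto d(a, y)$ maps $C$ onto $S_X$ (in fact bijectively, $C$ being isosceles-free), so $\card{S_X} = \card{C} = p \leq 2^m$. Plugging this into Observation~\ref{singleton-bound} gives
\[
	\delta(X) \leq \frac{\card{S_X} + \card{X}}{2} \leq \frac{2^m + 2^m(2k + 1)}{2} = 2^m(k + 1) = \beta_n,
\]
which is the asserted inequality.

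The argument is short, and the only steps needing care are the two structural inputs: first, that the isosceles-free components are all isometric homogeneous isosceles-free spaces, so that Corollary~\ref{thm:power_two} forces their common size $p$ to be a power of two dividing $n$, hence at most the $2$-part $2^m$ of $n$; and second, that the singleton distances are exactly those realized within one component, which is immediate from how the equivalence in Theorem~\ref{thm:isosceles-free-decomposition} was set up. There is no real obstacle beyond this bookkeeping — the heart of the matter is simply that $S_X$, being trapped inside a single isosceles-free component, cannot exceed $2^m$ in size.
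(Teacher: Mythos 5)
Your proof is correct and follows essentially the same route as the paper: decompose $X$ into isosceles-free components via Theorem~\ref{thm:isosceles-free-decomposition}, note $\card{S_X} = \card{C} \leq 2^m$ since the components are pairwise isometric homogeneous isosceles-free spaces whose common power-of-two size divides $n$, and conclude with Observation~\ref{singleton-bound}. Your spelled-out verification that $\card{S_X} = \card{C}$ is a welcome detail the paper only asserts, but the argument is the same.
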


\begin{proof}
	Since the space $X$ is $2$-homogeneous, we may consider its decomposition into isosceles-free components $X/{\sim}$ (Theorem~\ref{thm:isosceles-free-decomposition}).
	We have $\card{S_X} = \card{C}$ for any $C \in X/{\sim}$.
	Moreover, $\card{C}$ is a power of two since $C$ is a homogenous isosceles-free space, and $\card{C} \leq 2^m$ since $X/{\sim}$ is a decomposition into pairwise-isometric subspaces and so $\card{C}$ is a factor $\card{X}$.
	Together, by Observation~\ref{singleton-bound} we have
	\[
		\delta(X) \leq \frac{\card{S_X} + \card{X}}{2} \leq \frac{2^m + 2^m(2k + 1)}{2} = 2^m(k + 1). \qedhere
	\]
\end{proof}

The next example witnesses that this bound is optimal.

\begin{example} \label{bound_attained}
	Let $B_{m, k}:=2^mC_{2k+1} \times_1 \tuple{2^m,\norm{\cdot}}$, the $\ell_1$-product of a scaled-up circle graph $C_{2k+1}$ from Example \ref{circle-graph} with the isosceles-free space $\tuple{2^m,\norm{\cdot}}$ from Example \ref{ex:power-of-two}. The space $B_{m,k}$ is ultrahomogeneous, has $n=2^{m}(2k+1)$ elements and satisfies $\delta(B_{m,k})=\beta_n$.
\end{example}

\begin{proof}
	We already know that $C_{2k+1}$ is ultrahomogeneous with $\floor{\frac{2k+1}{2}} + 1 = k + 1$ distances from Example~\ref{circle-graph} and that $\tuple{2^m, \norm{\cdot}}$ is ultrahomogeneous with $2^m$ distances from Example~\ref{ex:power-of-two}.
	Multiplying the usual metric on $C_{2k+1}$ by $2^m$ to get the metric space $2^mC_{2k+1}:=\tuple{C_{2k+1},2^md_{C_{2k+1}}}$ makes sure that $+\maps \Dist(2^m C_{2k+1}) \times \Dist(\tuple{2^m, \norm{\cdot}}) \to \Dist(B_{m, k})$ is injective, and we may use Proposition~\ref{thm:homogeneous_product} to conclude that $B_{m, k}$ is ultrahomogeneous with $2^m(k+1)$ distinct distances.
\end{proof}

\begin{corollary}
	For every $n \in \NN_+$ we have $\Delta_\omega(n) = \Delta_2(n) = \beta_n \leq \Delta_1(n) \leq n$.
\end{corollary}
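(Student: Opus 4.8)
The plan is to assemble the statement from results already proved, as every ingredient is at hand. First I would note that every $n \in \NN_+$ has a unique representation $n = 2^m(2k + 1)$ with $m, k \in \omega$, where $m$ is the $2$-adic valuation of $n$; this is the representation used both in Theorem~\ref{thm:number-of-distances} and in Example~\ref{bound_attained}, so the quantity $\beta_n = 2^m(k + 1)$ is unambiguous. Observe also that $\beta_n \leq n$, since $k + 1 \leq 2k + 1$.

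Next, the chain $\Delta_\omega(n) \leq \Delta_2(n) \leq \Delta_1(n)$ is immediate from the definitions: an ultrahomogeneous space is in particular $2$-homogeneous, and a $2$-homogeneous space is in particular $1$-homogeneous, so each successive maximum ranges over a larger class of spaces. The inequality $\Delta_1(n) \leq n$ is the trivial half of Proposition~\ref{n-distances} (equivalently, the distance map $D_a\maps X \to \Dist(X)$ is surjective for a $1$-homogeneous space by Observation~\ref{thm:evaluation}, so $\delta(X) = \card{\Dist(X)} \leq \card{X} = n$).

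Finally I would pin down $\beta_n$ from both sides. Theorem~\ref{thm:number-of-distances} gives $\delta(X) \leq \beta_n$ for every $2$-homogeneous $n$-point space $X$, so $\Delta_2(n) \leq \beta_n$; and Example~\ref{bound_attained} exhibits an ultrahomogeneous $n$-point space $B_{m,k}$ with $\delta(B_{m,k}) = \beta_n$, so $\Delta_\omega(n) \geq \beta_n$. Combining, $\beta_n \leq \Delta_\omega(n) \leq \Delta_2(n) \leq \beta_n$, whence $\Delta_\omega(n) = \Delta_2(n) = \beta_n$, and then $\beta_n = \Delta_2(n) \leq \Delta_1(n) \leq n$ finishes the proof. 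There is no real obstacle here; the only point requiring a moment's attention is to make sure that the upper bound (Theorem~\ref{thm:number-of-distances}) and the matching lower bound (Example~\ref{bound_attained}) are stated with respect to the same decomposition $n = 2^m(2k+1)$, so that they genuinely refer to the same number $\beta_n$.
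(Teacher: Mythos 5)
Your proof is correct and follows exactly the route the paper intends for this (unproved, immediate) corollary: the trivial chain $\Delta_\omega(n)\leq\Delta_2(n)\leq\Delta_1(n)\leq n$, the upper bound $\Delta_2(n)\leq\beta_n$ from Theorem~\ref{thm:number-of-distances}, and the matching ultrahomogeneous witness $B_{m,k}$ from Example~\ref{bound_attained} giving $\Delta_\omega(n)\geq\beta_n$. Nothing is missing.
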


Now let us bound the number of distances used in $1$-homogeneous spaces.

\begin{proposition}
	Let $X$ be a $1$-homogeneous metric space $X$ of $n := \card{X}$ elements.
	If $n = 2k + 1$, then $\delta(X) \leq k + 1$.
	Hence, $\Delta_1(n) = \Delta_2(n) = \beta_n$ for every odd $n$.
\end{proposition}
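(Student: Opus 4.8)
The plan is to reduce the desired bound to a statement about singleton distances and then settle that by a parity argument.

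First I would invoke Observation~\ref{singleton-bound}, which gives $\delta(X) \le (\card{S_X} + \card{X})/2 = (\card{S_X} + 2k+1)/2$. So it suffices to prove that $\card{S_X} = 1$, i.e. that $0$ is the only singleton distance of $X$; then $\delta(X) \le (1 + 2k+1)/2 = k+1$, as required.

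The heart of the matter is this: suppose toward a contradiction that there is some $s \in S_X$ with $s > 0$. By the definition of a singleton distance, for each $x \in X$ there is a unique point, call it $f_s(x)$, with $d(x, f_s(x)) = s$; this defines a map $f_s\maps X \to X$. It is injective, since $f_s(x) = f_s(x') =: y$ gives $d(x, y) = s = d(x', y)$ and uniqueness of the point at distance $s$ from $y$ forces $x = x'$; as $X$ is finite, $f_s$ is a bijection. It is an involution, since $x$ satisfies $d(f_s(x), x) = s$, so uniqueness gives $f_s(f_s(x)) = x$. And it is fixed-point-free, since $f_s(x) = x$ would yield $d(x, x) = s > 0$. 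A fixed-point-free involution partitions $X$ into two-element orbits $\set{x, f_s(x)}$, forcing $\card{X}$ to be even, which contradicts $n = 2k+1$. Hence $S_X = \set{0}$ and the bound $\delta(X) \le k+1$ follows. For the ``hence'' clause, writing the odd number $n$ as $n = 2^0(2k+1)$ gives $\beta_n = 2^0(k+1) = k+1$, so the first part says $\Delta_1(n) \le \beta_n$; combined with the already-established chain $\beta_n = \Delta_2(n) \le \Delta_1(n)$ (with $\beta_n$ attained, e.g. by the circle graph $C_n$ of Example~\ref{circle-graph-2}), this yields $\Delta_1(n) = \Delta_2(n) = \beta_n$.

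I do not anticipate a serious obstacle; the only point that needs a little care is checking that the ``move by the singleton distance $s$'' map $f_s$ is a genuine fixed-point-free involution, after which the parity of $n$ does all the work. One could alternatively derive the contradiction from Theorem~\ref{thm:homogeneous_cases} — a $1$-homogeneous space with a positive singleton distance has at least two isosceles-generated components and an automorphism swapping two of them must then be a fixed-point-free involution — but the direct argument above is shorter.
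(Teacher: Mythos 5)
Your proof is correct and follows essentially the same route as the paper: Observation~\ref{singleton-bound} reduces everything to showing $S_X = \set{0}$, and a positive singleton distance would pair up the points of $X$ (your fixed-point-free involution $f_s$ is exactly the paper's pairing $\set{x, y_r(x)}$), contradicting the oddness of $\card{X}$. The assembly of the ``hence'' clause from $\beta_n = k+1$ and the previously established chain $\Delta_2(n) = \beta_n \leq \Delta_1(n)$ is also exactly what the paper intends.
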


\begin{proof}
	Note that for any distance $r\in\Dist(X)$ which is singleton, we can find pairs $\set{x, y_r(x)}$ where $y_r(x)$ is the unique element of $X$ satisfying $d(x,y_r(x))=r$ (note this is symmetrical since $x=y_r(y_r(x))$).
	Clearly, if $r>0$, this would lead to a decomposition of $X$ into pairs of elements, which is impossible since $\card{X}$ is odd.
	Hence, $S_X = \set{0}$, and we have $\delta(X) \leq (\card{S_X} + \card{X})/2 = (1 + (2k + 1))/2 = k + 1$ by Observation~\ref{singleton-bound}.
\end{proof}

\begin{example}\label{ex:circle-rainbow}
  In Example~\ref{Dn} for $n \in \NN_+$ we constructed the $1$-homogeneous space $D_n = C_n\times_r 2$ with
  \[
    \card{D_n} = 2n = 
    \begin{cases}
      4\ell, \\
      4\ell + 2,
    \end{cases}
    \text{and}\quad
    \delta(D_n) = \floor{n/2} + 1 + n = 
    \begin{cases}
      3\ell + 1, & n = 2\ell, \\
      3\ell + 2, & n = 2\ell + 1.
    \end{cases}
  \]
  We have $\delta(D_n) > \beta_{2n}$ for every $n$ that is not a power of two, i.e. we break the optimal bound for $2$-homogeneous spaces.
  
  The construction of $D_n$ can be compared to Example~\ref{bound_attained} in the case of two cycles of odd length.
  There we retain more symmetry while losing more distances, while here we lose more symmetry while keeping more distances.
\end{example}
\begin{proof}
  We observe that $\delta(D_n) > \beta_{2n}$ for every $n = 2^m(2k + 1)$ that is not a power of two, i.e. $k > 0$.
  If $n$ is odd, we have $n = 2k + 1$ and $\delta(D_n) = 3k + 2 > 2k + 2 = \beta_{2n}$.
  If $n$ is even, we have $n = 2\ell$ for $\ell = 2^{m - 1} (2k + 1)$. Hence, 
  \begin{align*}
    \delta(D_n) &= 3\ell + 1 = 2^{m - 1} (6k + 3) + 1, \\
    \beta_{2n} &= 2^{m + 1} (k + 1) = 2^{m - 1} (4k + 4),
  \end{align*}
  and again $\delta(D_n) > \beta_{2n}$ since $k \geq 1$.
\end{proof}

\begin{corollary}
	We have $\Delta_2(n) < \Delta_1(n) < n$ for even $n$ that is not a power of two.
\end{corollary}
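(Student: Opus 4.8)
The plan is to combine two facts already established in the previous sections. For the right-hand inequality $\Delta_1(n) < n$ I would simply invoke Proposition~\ref{n-distances}: since $n$ is not a power of two, that proposition directly gives $\Delta_1(n) < n$.

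For the strict inequality $\Delta_2(n) < \Delta_1(n)$, recall that $\Delta_2(n) = \beta_n$ by Theorem~\ref{thm:number-of-distances} together with Example~\ref{bound_attained}. Hence it suffices to exhibit a single $1$-homogeneous metric space of cardinality $n$ that uses strictly more than $\beta_n$ distances. First I would observe that $n/2$ is again not a power of two, because $n/2 = 2^t$ would force $n = 2^{t+1}$, contradicting the hypothesis on $n$. Consequently the space $D_{n/2}$ of Example~\ref{Dn} is well-defined and $1$-homogeneous, and by Example~\ref{ex:circle-rainbow} it satisfies $\card{D_{n/2}} = 2 \cdot (n/2) = n$ and $\delta(D_{n/2}) > \beta_{2 \cdot (n/2)} = \beta_n$. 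Therefore $\Delta_1(n) \geq \delta(D_{n/2}) > \beta_n = \Delta_2(n)$, which completes the argument.

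I do not anticipate any genuine obstacle here; the only point requiring a moment's care is the elementary observation that halving an even non-power-of-two yields another non-power-of-two, so that the construction of Example~\ref{ex:circle-rainbow} actually applies at cardinality $n$. Everything else reduces to a direct appeal to the cited results.
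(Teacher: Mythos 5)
Your proposal is correct and is essentially the paper's own argument: the corollary is stated immediately after Example~\ref{ex:circle-rainbow} precisely because that example (applied to $D_{n/2}$, with the observation that $n/2$ is again not a power of two) yields $\Delta_1(n) \geq \delta(D_{n/2}) > \beta_n = \Delta_2(n)$, while $\Delta_1(n) < n$ comes from Proposition~\ref{n-distances}. No gaps.
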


To retain more distances in a space of size $2n$, we can improve the previous example by taking the space $B_{m, k}$ instead of $C_n$ as a base for the rainbow duplicate construction.

\begin{example}\label{ex:rainbow-on-2-homogeneous-space}
	Let $E_{m,k} := B_{m,k} \times_r 2$ be a rainbow duplicate of $B_{m,k}$.
	If we put $n := 2^m(2k + 1)$, we have $\card{E_{m, k}} = 2n$ and $\delta(E_{m, k}) = 2^m (3k + 2) =: \alpha_{2n} \geq \beta_{2n} = 2^m(2k + 2)$.
	Moreover, we have $\alpha_{2n} \geq \delta(D_n)$ and even $\alpha_{2n} > \delta(D_n)$ if $m \geq 2$.
	
	\begin{proof}
		Given we can indeed form a rainbow duplicate $B_{m, k} \times_r 2$, which we show in a moment, we have
		\[
			\delta(E_{m, k}) = \delta(B_{m, k}) + \card{\im(r)} = \beta_n + n = 2^m(k + 1) + 2^m(2k + 1) = 2^m(3k + 2).
		\]
		Since $\delta(D_n) = \delta(C_n) + n$, the comparison between $\alpha_{2n}$ and $\delta(D_n)$ reduces to comparison between $\beta_n$ and $\floor{n/2} + 1$.
		For $n$ odd (i.e. $m = 0$) we have $\beta_n = k + 1 = \floor{n/2} + 1$.
		For $n$ even we have $\beta_n = 2^{m - 1}(2k + 2) \geq 2^{m - 1}(2k + 1) + 1 = \floor{n/2} + 1$, which becomes a strict inequality if $m \geq 2$.
		
		To form a rainbow duplicate $B_{m, k} \times_r 2$, we need:
		\begin{enumerate}
			\item\label{itm:H-exists}
				an Abelian subgroup $H\leq \Aut(B_{m,k})$ such that for each $x,y\in X$, there exists precisely one $h\in H$ with $h(x)=y$ (denoted by $h_x^y$),
			\item\label{itm:g-exists}
				a map $g\in\Aut(B_{m,k})$ such that $g^2=\id$ and $g\circ h\circ g^{-1}=h^{-1}$ for every $h\in H$,
			\item
				an admissible map $r\maps H \to (0, \infty)$, but such map exists by Remark~\ref{admissible-map-exists}.
		\end{enumerate}
		
		So let us start with \ref{itm:H-exists}.
		By definition, $B_{m,k} = (2^mC_{2k+1}) \times_1 X_m$, so $\tuple{\phi, \psi} \mapsto \phi \times \psi$ is a group isomorphism $\Aut(C_{2k+1}) \times \Aut(X_m) \to \Aut(B_{m,k})$ by Proposition~\ref{thm:homogeneous_product}.
		As we have shown in Example~\ref{Dn}, if we restrict ourselves to rotations in $C_{2k+1}$, we get an Abelian subgroup $H_1\leq \Aut(C_{2k+1})$ which has the desired property on $C_{2k+1}$ (and any of its re-scalings), whereas Corollary~\ref{unique-automorphism} tells us that $H_2:=\Aut(X_m)$ already has it on $X_m$, and $H_2$ is Abelian since it is Boolean.
		It is a standard fact of direct group products that this implies $H_1\times H_2=:H\leq \Aut(B_{m,k})$, while uniqueness of $h_x^y$ follows from the uniqueness of the representation of an element $h\in H$ in the direct product as a tuple $\tuple{h_1,h_2}$ as well as the uniqueness of $(h_1)_x^y$ and $(h_2)_{x'}^{y'}$ in the two factors. And of course, $H$ is again Abelian.
		
		What remains to be shown is \ref{itm:g-exists}. However, we have shown in \ref{g-is-reflection} already that we can choose any reflection $\Psi_i$ on $C_{2k+1}$ as a function $g_1$, say $g_1:=\Psi_0$, and since $X_m$ is Boolean, we can choose $g_2:=\id_{X_m}$. From this we get that $\tuple{g_1,g_2}$ will naturally satisfy \ref{itm:g-exists} since a direct product's group operation distributes into the components.
	\end{proof}
\end{example}

\begin{proposition}\label{2timesPrime}
	The number of distances in a $1$-homogeneous space $X$ of cardinality $n=2(2k+1)$ with $2k+1$ prime is bounded from above by $3k+2$.
	Hence, $\Delta_1(n) = \alpha_n$ for such $n$.
\end{proposition}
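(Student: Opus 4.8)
The plan is to bound the number $\abs{S_X}$ of singleton distances of $X$ and then invoke Observation~\ref{singleton-bound}. That observation gives
$\delta(X)\le(\abs{S_X}+\abs{X})/2=(\abs{S_X}+2(2k+1))/2$, so it suffices to prove $\abs{S_X}\le 2k+2$, i.e.\ that there are at most $2k+1$ positive singleton distances. Set $p:=2k+1$ (an odd prime by hypothesis).

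Next I would observe that each positive singleton distance $s$ yields a fixed-point-free involution $\iota_s\maps X\to X$, where $\iota_s(x)$ is the unique point with $d(x,\iota_s(x))=s$; distinct values of $s$ give distinct $\iota_s$, and $\iota_s\neq\id_X$. The key point is that $\iota_s$ commutes with every automorphism: for $f\in\Aut(X)$ we have $d(f(x),f(\iota_s(x)))=d(x,\iota_s(x))=s$, so by uniqueness $f(\iota_s(x))=\iota_s(f(x))$, i.e.\ $f\cmp\iota_s=\iota_s\cmp f$. Hence every $\iota_s$ lies in the centralizer $C$ of $\Aut(X)$ inside the group $\operatorname{Sym}(X)$ of all permutations of $X$, and is an involution of $C$; thus $\abs{S_X\setminus\set{0}}\le(\text{number of involutions of }C)$. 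Since $X$ is $1$-homogeneous, $\Aut(X)$ acts transitively, and $C$ then acts freely on $X$ (if $\sigma\in C$ fixes $x$ and $g(x)=y$ with $g\in\Aut(X)$, then $\sigma(y)=\sigma(g(x))=g(\sigma(x))=g(x)=y$, so $\sigma=\id_X$). Therefore all $C$-orbits have size $\abs{C}$ and $\abs{C}$ divides $\abs{X}=2p$.

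It then remains to count involutions. As $p$ is an odd prime, $\abs{C}\in\set{1,2,p,2p}$; a group of order $1$, $2$, or $p$ has at most one involution, while a group of order $2p$ is either cyclic (one involution) or dihedral $D_p$ (exactly $p$ involutions, the reflections). In every case $C$ has at most $p$ involutions, so $\abs{S_X\setminus\set{0}}\le p$ and $\abs{S_X}\le 2k+2$, giving $\delta(X)\le(2k+2+2(2k+1))/2=3k+2$. For the matching lower bound, the space $D_{2k+1}$ from Example~\ref{ex:circle-rainbow} (equivalently $E_{0,k}$ from Example~\ref{ex:rainbow-on-2-homogeneous-space}, since $B_{0,k}\cong C_{2k+1}$) is $1$-homogeneous, has $2(2k+1)$ points, and satisfies $\delta(D_{2k+1})=3k+2$; combining, $\Delta_1(n)=3k+2=\alpha_n$. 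The crucial idea — after which everything is routine — is recognizing that the singleton-distance involutions must land in the centralizer $C_{\operatorname{Sym}(X)}(\Aut X)$, and that this centralizer acts freely, so $\abs{C}\mid\abs{X}$; the primality of $2k+1$ then forces $C$ to have few involutions via the elementary classification of groups of order dividing $2p$.
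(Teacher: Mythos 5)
Your proof is correct, and it takes a genuinely different route from the paper's. The paper also works with Observation~\ref{singleton-bound}, but in the opposite direction: assuming $\delta(X)\geq 3k+2$, it extracts two positive singleton distances, alternates the two associated involutions to build a cycle whose length must divide $2(2k+1)$, uses primality (and parity) to force the cycle to exhaust $X$, and then analyses how every automorphism acts as a rotation or reflection of this cycle to exhibit $k$ repeated distances. You instead bound $\card{S_X}$ directly: the involutions $\iota_s$ attached to positive singleton distances commute with every automorphism, hence lie in the centralizer $C$ of $\Aut(X)$ in $\operatorname{Sym}(X)$; transitivity of $\Aut(X)$ makes $C$ act freely, so $\card{C}$ divides $2(2k+1)$, and the classification of groups of order $1$, $2$, $p$, $2p$ caps the number of involutions at $p=2k+1$, giving $\card{S_X}\leq 2k+2$ and $\delta(X)\leq 3k+2$; the lower bound via $D_{2k+1}$ (Examples~\ref{Dn} and~\ref{ex:circle-rainbow}, equivalently $E_{0,k}$ of Example~\ref{ex:rainbow-on-2-homogeneous-space}) is the same witness the paper relies on. Your argument is more conceptual, and it localizes the use of primality entirely in the classification step; since in fact any group of order twice an odd number $d$ has at most $d$ involutions (in the regular representation an involution is a product of $d$ transpositions, hence an odd permutation, so all involutions lie in the nontrivial coset of an index-two subgroup), your method extends verbatim to give $\Delta_1(2d)=\alpha_{2d}$ for every odd $d$, touching part of the paper's concluding Question, whereas the paper's cycle argument needs primality to pin down the cycle length. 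What the paper's hands-on approach buys in exchange is an explicit picture of the near-extremal spaces: they carry a canonical $(4k+2)$-cycle on which $\Aut(X)$ acts by rotations and reflections.
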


\begin{proof}
	Assume $\delta(X)\geq 3k+2$. Since $\card X=4k+2$, this means there are at least $2k+2$ singleton distances in $X$. Therefore it is possible to pick two non-zero singleton distances $s_1>s_2>0$ and define $f_1$ as the function which maps each $x\in X$ to the unique point of distance $s_1$ from $x$, and analogously define $f_2(x)$ by $d(f_2(x),x)=s_2$ for all $x$.
	
	Now, define a sequence starting at an arbitrary point $x_0\in X$ as:
	\begin{align*}
		x_{2i+1}&:=f_1(x_{2i}),&x_{2i+2}&:=f_2(x_{2i+1}),&i&\in\omega.
	\end{align*}
	Clearly, for each $x_i$, $f_j(x_i)\neq x_i$ since they have distance $s_j>0$ from each other, and for $\ell\neq j$ we have $f_\ell(f_j(x_i))\neq x_i$ since their distances from $f_j(x_i)$ are distinct.
	
	So we have a non-trivial sequence $\tuple{x_n}_{n\in\omega}$ in the finite space $X$ and we want to find the smallest $m\in\NN^+$ such that $x_m=x_i$ for some $i<m$. Note that since $s_1$ and $s_2$ are singleton distances and $d(x_{2i},x_{2i+1})=s_1$ and $d(x_{2i+2},x_{2i+1})=s_2$ for all $i$, the first $m$ points $x_i$ form a `chain' where each point in said chain except $x_0$ and $x_m$ are already connected to their unique partners of distance $s_1$ and $s_2$, respectively. So the only possible way to achieve $x_m=x_i$ for some $i<m$ is if $i=0$ and thus the sequence is $m$-periodic.
	
	Moreover, changing the starting point to $x_0'\not\in \set{x_n\colon n\in\omega}$ will yield a disjoint set $\set{x_n'\colon n\in\omega}$, again because no point $x_i$ has more than one point $y$ of distance $s_j$ to $x_i$ in $X$. Due to $1$-homogeneity, it follows that $\card{\set{x_n\colon n\in\omega}}=m \mid 2(2k+1)=\card X$. Since we have already shown that $m>2$, it follows that $m=2k+1$ or $m=4k+2$, however, if $m$ were $2k+1$ and therefore odd, this would imply that $x_0=x_m=x_{2k+1}=f_1(x_{2k})$, meaning $x_1=x_{m-1}$ since they both have distance $s_1$ to $x_0$, contradicting the minimality of $m$. It follows that $m=4k+2=\card X$ and thus $\set{x_n: n\in\omega}=X$.
	
	We can use this observation to find $k$ distances which are repeated in $X$. To do this, first notice that each automorphism $g\in\Aut(X)$ acts on $X$ like a rotation or reflection, i.e. there always exists an $i\in\omega$ such that either $g(x_j)=x_{i+j}$ for all $j$ (if $i$ is even), or $g(x_j)=x_{i-j}$ (if $i$ is odd). This is because $g$ preserves distances, so it, in particular, preserves pairs $\set{x,f_j(x)}$ of distance $s_j$. This means $g$ commutes with both $f_1$ and $f_2$.
	
	It follows that if $g$ maps $x_0$ to $x_i$, then
	\begin{align*}
		g(x_j)=g(f_{\ell_j}\circ\dotsc\circ f_1(x_0))=f_{\ell_j}\circ\dotsb\circ f_1(x_i)=
		\begin{cases}
			x_{i+j},&i\text{ even,}\\
			x_{i-j},&i\text{ odd.}
		\end{cases}
	\end{align*}
	This is because for even $i$, $f_1(x_i)=x_{i+1}$ whereas, for odd $i$, it is $x_{i-1}$, and from that point onwards $f_1$ and $f_2$ alternate.
	
	Let $g_i\in\Aut(X)$ be the automorphism satisfying $g_i(x_0)=x_i$. Then, for each even $i$, notice that
	$d(x_{m-i},x_0)=d(g_i(x_{m-i}),g_i(x_0))=d(x_0,x_i)=:r_i$, and the function $d(\cdot,x_0)$ therefore assumes at most $k+1$ distinct values on the $2k+1$ points in $X$ with an even index $\set{x_{2n}\colon n\in\omega}$, and of course at most $2k+1$ distinct values on the remaining $2k+1$ odd-indexed points, yielding an upper bound of $\delta(X)\leqslant3k+2 = \alpha_n$.
\end{proof}

\begin{proposition}
	We have $\Delta_1(n) \leq n - 2$ for every $n \geq 7$ that is not a power of two.
	
	\begin{proof}
		Let $X$ be a $1$-homogeneous space of cardinality $n$.
		If $n$ is not a power of two, then $\delta(X) \leq n - 1$ by Proposition~\ref{n-distances}.
		Suppose $\delta(X) = n - 1$.
		That means there is exactly one $r \in \Dist(X)$ that is not a singleton distance, and for every $x \in X$ there are exactly two points $y \in X$ with $d(x, y) = r$.
		
		Let $x_1, x_2 \in X$ be such that $d(x_1, x_2) = r$.
		Then for every $i \geq 2$ there is a unique point $x_{i + 1} \in X$ such that $d(x_i, x_{i + 1}) = r$ and $x_{i + 1} \neq x_{i - 1}$.
		Since $X$ is finite, there is the smallest $k$ such that $x_{k + 1} \in \set{x_1, \ldots, x_k}$.
		Necessarily, $k \geq 3$ (otherwise $r$ would be singleton) and $x_{k + 1} = x_1$ (if $x_{k + 1} = x_i$ with $1 < i \leq k$, then $i < k - 1$ and $x_{i - 1}, x_{i + 1}, x_k$ are three distinct points with distance $r$ from $x_i$).
		Hence we have obtained a cycle $C := \set{x_i: i \in \ZZ_k}$ such that $d(x_i, x_{i + 1}) = r$ for every $i \in \ZZ_k$.
		
		Let $q := d(x_1, x_{-1})$.
		By $1$-homogeneity, for every $i \in \ZZ_k$ there is $f \in \Aut(X)$ such that $f(x_0) = x_i$.
		It follows that $\set{x_{i + 1}, x_{i - 1}} = \set{y \in X: d(y, x_i) = r} = \set{f(y) \in X: d(y, x_0) = r} = \set{f(x_1), f(x_{-1})}$, and so $d(x_{i + 1}, x_{i - 1}) = q$.
		Hence, $d(x_i, x_{i + 2}) = d(x_i, x_{i - 2}) = q$ for every $i \in \ZZ_k$.
		Since $r$ is the only non-singleton distance, we have either $q \neq r$, $x_{i + 2} = x_{i - 2}$, and $k = 4$, or $q = r$, $x_{i + 2} = x_{i - 1}$, and $k = 3$.
		
		Since $r$ is the only non-singleton distance, $C$ is a component of the decomposition $X/{\sim}$ into isosceles-generated components.
		Since $\card{X}$ is not a power of two, $X$ is not a Boolean space, and so $\card{X/{\sim}} \in \set{1, 2}$.
		Hence $\card{X} = \card{C} \cdot \card{X/{\sim}} \in \set{3, 4, 6, 8}$.
	\end{proof}
\end{proposition}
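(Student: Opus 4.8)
The plan is to combine Proposition~\ref{n-distances}, which already gives $\delta(X) \leq n-1$ whenever $n := \card{X}$ is not a power of two, with a proof that $\delta(X) = n-1$ is in fact impossible once $n \geq 7$ and $n$ is not a power of two. So I would fix a $1$-homogeneous space $X$ with $\card{X} = n$, assume for contradiction that $\delta(X) = n-1$, and first extract the rigid structure this forces. By Observation~\ref{singleton-bound} we get $\card{S_X} \geq 2\delta(X) - n = n - 2$. If $\card{S_X} = n-1 = \delta(X)$ then $X$ is isosceles-free and $n$ is a power of two by Corollary~\ref{thm:power_two}, contrary to assumption; hence $\card{S_X} = n-2$, and tracing back through the counting in Observation~\ref{singleton-bound} shows that there is exactly one non-singleton distance $r \in \Dist(X)$ and that for every $x \in X$ there are exactly two points $y$ with $d(x,y) = r$.

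Next I would run the ``distance-$r$ chain'' argument. Pick $x_1, x_2$ with $d(x_1, x_2) = r$; for $i \geq 2$ let $x_{i+1}$ be the unique point with $d(x_i, x_{i+1}) = r$ and $x_{i+1} \neq x_{i-1}$, and let $k$ be minimal with $x_{k+1} \in \set{x_1, \dots, x_k}$. One checks $k \geq 3$ ($k=1,2$ are excluded since $x_2 \neq x_1$ and $x_3 \neq x_1, x_2$), and that the repeat must occur at the start: if $x_{k+1} = x_i$ with $1 < i \leq k$, then $i < k-1$ (as $x_{k+1} \neq x_{k-1}, x_k$) and $x_{i-1}, x_{i+1}, x_k$ would be three distinct points at distance $r$ from $x_i$, impossible. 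Thus $x_{k+1} = x_1$, and $C := \set{x_i : i \in \ZZ_k}$ is a cycle of $k$ distinct points with $d(x_i, x_{i+1}) = r$ for all $i \in \ZZ_k$.

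Then I would pin down $k$ using $1$-homogeneity. Let $q := d(x_0, x_2)$; for each $i$ an automorphism taking $x_0$ to $x_i$ carries $\set{x_1, x_{-1}} = \set{y : d(y, x_0) = r}$ onto $\set{x_{i+1}, x_{i-1}}$, so $d(x_{i+1}, x_{i-1}) = q$ and hence $d(x_i, x_{i+2}) = q$ for all $i$. If $q \neq r$ then $q$ is singleton, forcing $x_{i+2} = x_{i-2}$ and $k \mid 4$; if $q = r$ then $x_{i+2} \in \set{x_{i+1}, x_{i-1}}$ forces $x_{i+2} = x_{i-1}$ and $k \mid 3$. Either way $k \in \set{3, 4}$. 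Finally, since $r$ is the only non-singleton distance, $C$ is exactly one isosceles-generated component of $X$ in the sense of Theorem~\ref{thm:iso-generated}; all components are isometric, hence of size $k$, and since $n$ is not a power of two $X$ is not a Boolean metric space, so $\card{X/{\sim}} \leq 2$ by Theorem~\ref{thm:iso-generated}~\ref{itm:Boolean}. Therefore $n = k \cdot \card{X/{\sim}} \in \set{3, 4, 6, 8}$, contradicting $n \geq 7$ together with $n$ not being a power of two, and the proof is complete. The only delicate point is the bookkeeping in the chain argument that forces the cycle to close precisely at $x_1$ with $k \geq 3$; the rest is a direct application of Observation~\ref{singleton-bound}, $1$-homogeneity, and the decomposition theory of Section~\ref{sec:decomp}.
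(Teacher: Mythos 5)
Your proposal is correct and follows essentially the same route as the paper's proof: the distance-$r$ chain closing into a cycle of length $3$ or $4$, identification of that cycle with an isosceles-generated component, and the Boolean/component-count argument forcing $n \in \set{3,4,6,8}$. The only difference is cosmetic: you justify via the counting in Observation~\ref{singleton-bound} the fact (stated without detail in the paper) that $\delta(X) = n-1$ forces exactly one non-singleton distance attained exactly twice from each point, which is a welcome bit of added explicitness.
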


Table~\ref{table} summarizes the maximal number of distances in small homogeneous spaces.
We use the following results obtained throughout this section.
Namely, we use the bounds $\beta_n$ from Theorem~\ref{thm:number-of-distances} and $\alpha_n$ from Example~\ref{ex:rainbow-on-2-homogeneous-space} (where we define define $\alpha_n := \beta_n$ for $n$ odd).
\begin{itemize}
	\item For every $n \in \NN_+$ we have $\Delta_\omega(n) = \Delta_2(n) = \beta_n \leq \alpha_n \leq \Delta_1(n) \leq n$.
	\item For $n$ power of two or odd we have $\Delta_1(n) = \Delta_2(n) = \beta_n = \alpha_n$.
	\item For $n = 2(2k + 1)$ with $2k + 1$ prime we have $\Delta_1(n) = 3k + 2 = \alpha_n$.
	\item For $7 \leq n$ not power of two we have $\Delta_1(n) \leq n - 2$.
\end{itemize}

\begin{table}[ht!] 
	\centering
	\begin{tabular}{cccl}
		$n$ & $\Delta_2(n)$ & $\Delta_1(n)$ & argument for $\Delta_1$ upper bound \\
		\hline
		$1$ & $1$ & $1$ & power of two \\
		$2$ & $2$ & $2$ & power of two \\
		$3$ & $2$ & $2$ & odd \\
		$4$ & $4$ & $4$ & power of two \\
		$5$ & $3$ & $3$ & odd \\
		$6$ & $4$ & $\mathbf{5}$ & two times odd prime\\
		$7$ & $4$ & $4$ & odd \\
		$8$ & $8$ & $8$ & power of two \\
		$9$ & $5$ & $5$ & odd \\
		$10$ & $6$ & $\mathbf{8}$ & two times odd prime\\
		$11$ & $6$ & $6$ & odd \\
		$12$ & $8$ & $\mathbf{10}$ & $\Delta_1(n) \leq n - 2$ \\
		$13$ & $7$ & $7$ & odd \\
		$14$ & $8$ & $\mathbf{11}$ & two times odd prime\\
		$15$ & $8$ & $8$ & odd \\
		$16$ & $16$ & $16$ & power of two \\
		$17$ & $9$ & $9$ & odd \\
		$18$ & $10$ & $\mathbf{\geq 14, \leq 16}$ & $\Delta_1(n) \leq n - 2$ \\
		$19$ & $10$ & $10$ & odd \\
		$20$ & $12$ & $\mathbf{\geq 16, \leq 18}$ & $\Delta_1(n) \leq n - 2$ \\
	\end{tabular}
	
	\caption{Maximal number of distances in small spaces.}
	\label{table}
\end{table}

\begin{question}
	What are the values of $\Delta_1(n)$ for the cases not covered so far ($n = 2^m(2k + 1)$ for $k > 0$ where $m \geq 2$ or $m = 1$ and $2k + 1$ non-prime)?
\end{question}

\begin{remark}
	Note that when searching for $\Delta_1(n)$ where $n$ is not a power of two, a space $X$ of cardinality $n$ cannot be Boolean, and hence by Theorem~\ref{thm:homogeneous_cases} is either isosceles-generated, or a rainbow duplicate of an isosceles-generated space $Y$, and in the latter case $\delta(X) = \delta(Y) + n/2$.
	Hence, $\Delta_1(n)$ ultimately depends on numbers of distances of isosceles-generated spaces.
\end{remark}

\begin{remark}
	Let us note that the question of maximal number of distances in a finite homogeneous metric space can be rephrased in the language of colorings of complete graphs.
	Let $X$ be a set and let $c\maps X \times X \to C_X \ni 0$ be a surjective map onto a set $C_X$ such that
	\begin{itemize}
		\item $c(x, y) = 0$ if and only if $x = y$, i.e. $0$ is a special color reserved for the diagonal,
		\item $c(x, y) = c(y, x)$, i.e. $c$ is symmetric.
	\end{itemize}
	The map $c$ can be naturally viewed as a (not necessarily proper) edge coloring of the complete graph on $X$.
	Let us call pairs $\tuple{X, c}$ \emph{(edge-)colored complete graphs}.
	
	Note that for every metric space $X$, the distance $d\maps X \times X \to \Dist(X)$ is a valid coloring, and that the notions of automorphisms and $n$-homogeneity are exactly the same when we view $X$ as a colored graph instead of a metric space.
	$1$-homogeneity of the metric would be more traditionally called \emph{vertex-transitivity} of the induced coloring.
	Also note that for every finite colored complete graph $X$ we may consider an embedding $e\maps C_X \to \set{0} \cup [a, 2a] \subseteq \RR$ such that $e(0) = 0$ for some $a > 0$, and put $d(x, y) = e(c(x, y))$. This defines a metric on $X$ inducing an equivalent coloring.
	Since we take positive distances in $[a, 2a]$, the triangle inequality becomes trivial.
	This is what we have done in Theorem~\ref{thm:isosceles-free-quotient}.
	
	Altogether, the question of maximal number of distances can be reformulated as: 
	``What is the maximal number of colors used by a vertex-transitive coloring of a complete graph of cardinality $n$?''
\end{remark}

\paragraph{Acknowledgements.} The research of A.~Bartoš and W.~Kubiś was supported by GA ČR (Czech Science Foundation) grant EXPRO 20-31529X and by the Czech Academy of Sciences (RVO 67985840). The research of C.~Bargetz and F.~Luggin was supported by the Austrian Science Fund (FWF): I~4570-N.

\medskip
\noindent
This version of the article has been accepted for publication, after peer review, but is not the Version of Record and does not reflect post-acceptance improvements, or any corrections.
The Version of Record is available online at: \url{http://dx.doi.org/10.1007/s13398-024-01587-y}.

\paragraph{Conflicts of interest.} All authors declare that they have no conflicts of interest.

\bibliographystyle{siam}
\bibliography{references}

\end{document}